\newcommand{\trace}{\operatorname{tr}}
\newcommand{\vect}{\operatorname{vec}}
\newcommand{\diag}{\operatorname{diag}}
\newcommand{\cl}{\operatorname{cl}}
\newcommand{\blkdiag}{\operatorname{blkdiag}}
\newcommand{\expn}{\operatorname{e}}
\newcommand {\smat}      [1] {\left[\begin{smallmatrix}{#1}}
\newcommand {\srix}          {\end{smallmatrix}\right]}
\newcommand{\sig}{\operatorname{sig}}
\newcommand{\mor}{\operatorname{MOR}}
\newcommand{\reduced}{\operatorname{red}}
\DeclareFontFamily{U}{mathx}{}
\DeclareFontShape{U}{mathx}{m}{n}{<-> mathx10}{}
\DeclareSymbolFont{mathx}{U}{mathx}{m}{n}
\DeclareMathAccent{\widecheck}{0}{mathx}{"71}
\newtheoremstyle{dotless}{}{}{\itshape}{}{\bfseries}{}
\theoremstyle{dotless}
\newtheorem{them}{Theorem}[section]	
\newtheorem{kor}[them]{Corollary}
\newtheorem{defi}[them]{Definition}
\newtheorem{remark}[them]{Remark}
\newtheorem{lem}[them]{Lemma}
\newtheorem{prop}[them]{Proposition}
\newtheorem{bsp}[them]{Example}
\title{Signature-Based Universal Bilinear Approximations for Nonlinear Systems and Model Order Reduction}
\author{Martin Redmann\thanks{University of Rostock, Institute of Mathematics, Ulmenstraße 69, 18057 Rostock, Germany, Email: {\tt
martin.redmann@uni-rostock.de}.}\and
Justus Werner\thanks{University of Rostock, Institute of Mathematics, Ulmenstraße 69, 18057 Rostock, Germany, Email: {\tt
justus.werner@uni-rostock.de}.}
}
\begin{document}
	
	\maketitle

\begin{abstract}
This paper deals with non-Lipschitz nonlinear systems. Such systems can be approximated by a linear map of so-called signatures, which play a crucial role in the theory of rough paths and can be interpreted as collections of iterated integrals involving the control process. As a consequence, we identify a universal bilinear system, solved by the signature, that can approximate the state or output of the original nonlinear dynamics arbitrarily well. In contrast to other (bi)linearization techniques, the signature approach remains feasible in large-scale settings, as the dimension of the associated bilinear system grows only with the number of inputs. However, the signature model is typically of high order, requiring an optimization process based on model order reduction (MOR). We derive an MOR method for unstable bilinear systems with non-zero initial states and apply it to the signature, yielding a potentially low-dimensional bilinear model. An advantage of our method is that the original nonlinear system need not be known explicitly, since only data are required to learn the linear map of the signature. The subsequent MOR procedure is model-oriented and specifically designed for the signature process. Consequently, this work has two main applications: (1) efficient modeling/data fitting using small-scale bilinear systems, and (2) MOR for nonlinear systems. We illustrate the effectiveness of our approach in the second application through numerical experiments.

\end{abstract}
\textbf{Keywords:} Nonlinear systems, signatures, bilinear approximations, model order reduction, non-Lipschitz vector fields

\noindent\textbf{MSC classification:} 60L10, 65C30, 93A15, 93C10

	\section{Introduction}

The study of nonlinear systems is motivated by their ability to represent complex real-world processes that linear models cannot adequately capture. In many cases, the underlying nonlinear model is unknown and only data are available, prompting the use of data-driven approaches to infer system behavior directly from observations. Moreover, when such systems are high-dimensional, their analysis becomes computationally demanding, motivating the development of reduced-order and data-efficient methods to capture essential dynamics. This paper proposes an approach to address the challenges discussed above. We develop a universal model capable of capturing the behavior of general nonlinear systems, while its unknown components can be inferred directly from observational data. The universal model can be high-dimensional, and we employ model order reduction (MOR) techniques to optimize it efficiently. Notably, high-dimensional nonlinear systems can often be accurately approximated by low-order bilinear systems using our approach, highlighting its potential for effective complexity reduction.

\subsection{Existing results and comparison to the signature approach}

This work lies at the interface of multiple disciplines. In particular, we exploit techniques from rough paths theory, MOR, and data-driven learning to establish connections between nonlinear and bilinear systems.\smallskip

Path signatures play a crucial role in the theory of rough paths \cite{Friz2020, FV10, Lyons1998}, which provides a deterministic integration framework for highly irregular paths, extending well beyond classical concepts such as Stieltjes and Young integration. For instance, it enables the pathwise definition of integrals with respect to fractional Brownian motion. Signatures also possess a universal approximation property \cite{BayerRedmann, Cuchiero23}, enabling the learning of solutions to rough or stochastic differential equations. Beyond this, they have found compelling applications in fields such as finance \cite{BayerRedmann, Cuchiero2022} and machine learning \cite{Lyons2025}, demonstrating the versatility and power of signature-based approximations. This paper presents the concept of signatures within the context of control theory, with the goal of making them accessible without extensive background in algebra or rough paths, and demonstrates their application in learning nonlinear control systems. Specifically, we show that the output of a very general nonlinear system can be approximated by a linear map of a signature, which is a solution of a bilinear system that is unstable and has a non-zero initial state. The universality of bilinear systems is known in the control community, for example, see \cite{Krener1975, Sussmann1976} for existence results. Moreover, the Carleman linearization method \cite{Rugh1981, Sastry1999} highlights this perspective by using Taylor expansions of the system's vector fields. However, the Carleman linearization requires certain regularity of the vector fields and typically ensures a small approximation error only locally. This is also true for the bilinearization proposed in \cite{Krener1975}. Another limitation of Carleman's method is that the dimension of the resulting bilinear system grows exponentially with the state dimension, making it impractical for large-scale systems. In contrast, our approach constructs a signature-based universal bilinear system that does not require smoothness of the underlying vector fields and is not limited to small neighborhoods for an accurate approximation. A key advantage is its applicability to high-dimensional systems, as the signature model grows only with the number of inputs. Finally, the computation of the signature model requires no knowledge of the underlying nonlinear system itself, relying solely on observational data of the system dynamics. It is perhaps unsurprising that the fundamental result underlying the universality of signatures is the Stone–Weierstrass theorem. This theorem has previously been applied to represent or approximate control systems \cite{Gallman1976}. However, that work employs these arguments to derive series representations of nonlinear systems in the sense of Volterra or Wiener, whereas our approach focuses on bilinearization.\smallskip

Since signature models can be applied in high-dimensional settings, they are of particular interest in the context of MOR for nonlinear systems. Applying signature-based bilinearization to the original system results in a large-scale bilinear system. MOR for such systems has been extensively studied, with balanced truncation (BT) analyzed, for instance, in \cite{Al-Baiyat1993, Benner2011}. BT offers the advantage of providing a comprehensive error analysis in the bilinear case \cite{Redmann2018, Redmann2020, Redmann2021}. However, existing approaches generally rely on asymptotic stability and zero initial states. Extensions to general initial conditions have been considered in \cite{Cao2021, Redmann2023}, but these methods still assume stability. In contrast, the signature-based bilinear system is neither stable nor initialized at zero. To address this, we study a time-limited version of BT specifically tailored to signature systems. Here, we propose Gramians and show their relations to dominant subspaces which is the fundament of the subsequent MOR procedure. Other MOR techniques, such as Krylov subspace-based or $\mathcal{H}_2$-optimal methods \cite{Benner2012, Breiten2010, Flagg2015, Zhang2002}, cannot be directly applied in this setting, but they can be adapted to handle the lack of stability and nonzero initial conditions inherent to signature models.\smallskip

Instead of relying on the bilinearization techniques proposed in this paper, MOR can be applied directly to nonlinear systems. Methods such as BT have been extended to nonlinear settings \cite{Scherpen2010, Otto2023, Scherpen1993}. Related approaches that allow for the derivation of error bounds are also available \cite{Scherpen2014, redmann2025}. However, computing the system Gramians remains highly challenging for these methods, an issue we avoid by exploiting signature approximations. Projection-based MOR techniques have also been explored by transforming nonlinear dynamics into a quadratic-bilinear form via lifting \cite{morBenG24, Gu2011, Willcox2022, Willcox2020}. Nevertheless, the analysis and computation of quadratic systems are significantly more involved than in the bilinear case. Data-driven MOR approaches \cite{Condon2004, Antoulas2018, Willcox2019, Willcox2020} have been proved effective in addressing the challenge of identifying dominant subspaces in nonlinear systems. However, some of these methods require full access to the system’s governing equations, which is often impractical. Operator inference methods \cite{Freitag2025, Willcox2021, Kramer2023, Peherstorfer2016, Peherstorfer2023} can construct reduced-order models directly from data without explicit knowledge of the system coefficients. Yet, these approaches still require the underlying model structure to be known. This contrasts with our signature-based framework, which does not assume any prior knowledge of the system dynamics.

%

\subsection{Framework and story in a nutshell}
	
We study the following controlled differential equation endowed with the quality of interest $y$:
\begin{subequations}\label{original_system}
	\begin{align}
		\label{ODE}
		\dot{x}(t)&=f_0(x(t))+f(x(t))u(t),\quad x(0)=x_0,\\
		\label{eq_y}
		y(t)&=c(x(t)),
	\end{align}
	\end{subequations}
where $x:[0,T] \to \mathbb{R}^d$, $u:[0,T]\to \mathbb{R}^m$, $f_0:\mathbb{R}^d \to \mathbb{R}^d$, $f:\mathbb{R}^d \to \mathbb{R}^{d\times m}$, $y\colon[0,T] \to \mathbb{R}^{p}$ and $c\colon \mathbb R^d\to \mathbb{R}^{p}$. Moreover, let $f_0$, $f$ and $c$ be a locally Lipschitz continuous functions meaning, e.g., that for every $R>0$ there is a constant $L_{c, R}>0$ such that $\|c(x)-c(z)\| \leq  L_{c, R}\|x-z\|$ for all $x$ and $z$ with $\|x\|, \|z\|\leq R$. We further assume that the control usually satisfies $u\in L^2([0,T]; \mathbb R^m)$,  but some considerations only require $u\in L^1([0,T]; \mathbb R^m)$. Moreover, we suppose that \eqref{ODE} always has a unique global solution and will be more specific on the assumptions on $f_0$ and $f$ in Section \ref{sec_sig_approx} to ensure this.

In this work, we find a matrix $C\in \mathbb R^{p\times n}$ such that $y(\cdot)\approx CS(\cdot)$ in some sense, where $S$ is the so-called truncated signature of $\hat U(t) = (t, \int_0^t u(s)^\top\mathrm{d}s)^\top$, $t\in[0, T]$, taking values in $\mathbb R^n$. The computation of $C$ does not require the knowledge of the nonlinear system, but only data of \eqref{original_system}. The truncated signature $S$ satisfies a differential equation of the following form
	\begin{align}\label{tr_sig_ode}
		\dot{S}(t)=A_0 S(t) + \sum_{i=1}^{m} A_iS(t)u_i(t),\quad S(0)=s_0,\quad y_S(t)={C}S(t)\approx y(t),
	\end{align}
where $s_0\in\mathbb R^n$ is a particular fixed initial state, $A_0, \dots, A_m\in \mathbb R^{n\times n}$ and $u_1,\ldots,u_m\colon[0,T] \to \mathbb{R}$ are the components of $u$.
Usually, the dimension $n$ is large. Therefore, dimension reduction techniques are designed, analyzed and applied to \eqref{tr_sig_ode}. In fact, a reduced state $\widecheck S$ taking values in $\mathbb R^r$, $r\ll n$, is constructed that satisfies the following bilinear system \begin{align}\label{rom_intro}
		\dot{\widecheck S}(t)=\widecheck  A_0 \widecheck  S(t) + \sum_{i=1}^{m} \widecheck  A_i\widecheck  S(t)u_i(t),\quad {\widecheck S}(0)={\widecheck s}_0,
	\end{align}
where ${\widecheck s}_0\in\mathbb R^r$ and $\widecheck A_0, \dots, \widecheck A_m\in \mathbb R^{r\times r}$. This reduced linear system is used to approximate \begin{align*}
    y(t)\approx {C}S(t)\approx  \widecheck {C}\widecheck S(t)                                                                                                                                                                                                                                                                                                                                                                                                                                                                                 \end{align*}
for a suitable matrix $\widecheck {C}\in \mathbb R^{p\times r}$.

\section{Foundations and signatures}


We briefly link \eqref{ODE} to Stieltjes differential equations requiring the following definition.
	\begin{defi}[Bounded Variation]
A function $f:[0,T] \to \mathbb{R}^m$ is of bounded variation
		if 
		\begin{align*}
			\|f\|_{BV}:=\sup_{P\in \mathcal{P}} \sum_{i=0}^{n_P-1}\|f(t_{i+1})-f(t_i)\|<\infty,
		\end{align*}
where 
$\mathcal{P}=\{P=\{t_0,\ldots, t_{n_P}\}: P \text{ is a partition of }[0,T]\}$.

	\end{defi}
	Note that $\|\cdot\|_{BV}$ is only a seminorm. We deal with a particular bounded variation path $U$ considered in the next lemma.
	\begin{lem}\label{ineqBVL1}
	Let $u \in L^1([0,T];\mathbb{R}^m)$ and $U(\cdot)=\int_{0}^{\cdot} u(t) \, \mathrm{d}t$. The absolutely continuous function $U$ is of bounded variation. More precisely, it holds that \begin{align*}
			\|U\|_{BV} \leq \|u\|_{L^1([0, T])}.
		\end{align*}
	\end{lem}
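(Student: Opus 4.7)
The plan is essentially to move the norm inside the integral on each subinterval of a partition and then telescope. Given an arbitrary partition $P=\{t_0,\ldots,t_{n_P}\}\in\mathcal P$, the first step is to rewrite each increment as
\begin{align*}
U(t_{i+1})-U(t_i)=\int_{t_i}^{t_{i+1}} u(s)\,\mathrm{d}s,
\end{align*}
which is immediate from the definition of $U$. This reduces the problem to estimating the norm of a Bochner/componentwise Lebesgue integral.

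Second, I would invoke the standard triangle-inequality estimate for the Lebesgue integral of a vector-valued $L^1$ function, namely
\begin{align*}
\Bigl\|\int_{t_i}^{t_{i+1}} u(s)\,\mathrm{d}s\Bigr\|\leq \int_{t_i}^{t_{i+1}} \|u(s)\|\,\mathrm{d}s.
\end{align*}
Summing this bound over $i=0,\ldots,n_P-1$ and using additivity of the Lebesgue integral with respect to the disjoint union of the subintervals $[t_i,t_{i+1}]$ which covers $[0,T]$, I obtain
\begin{align*}
\sum_{i=0}^{n_P-1}\|U(t_{i+1})-U(t_i)\|\leq \int_{0}^{T}\|u(s)\|\,\mathrm{d}s=\|u\|_{L^1([0,T])}.
\end{align*}
Since the right-hand side does not depend on $P$, taking the supremum over all partitions $P\in\mathcal P$ yields $\|U\|_{BV}\leq \|u\|_{L^1([0,T])}<\infty$, proving both bounded variation and the claimed inequality. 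Absolute continuity of $U$ is a classical consequence of $u\in L^1$ and can simply be cited.

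There is essentially no obstacle here; the only subtlety is to be explicit about which triangle inequality one uses (for vector-valued Lebesgue integrals in $\mathbb R^m$ with the chosen norm), but since $\mathbb R^m$ is finite-dimensional this follows by approximation with simple functions or componentwise from the scalar case combined with the equivalence of norms. No further structure of $u$ beyond $L^1$-integrability is required.
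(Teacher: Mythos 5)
Your argument is correct and complete: writing each increment as $\int_{t_i}^{t_{i+1}}u(s)\,\mathrm{d}s$, applying the triangle inequality for vector-valued Lebesgue integrals, summing over the partition, and taking the supremum is exactly the standard proof. The paper itself does not spell this out but merely cites \cite[Lemma 3.34]{Folland} and \cite[Lemma 5.2.9]{Heil}, and your elementary derivation is precisely the argument contained in those references, so there is nothing to add.
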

	\begin{proof}
Proofs of the above statements can be found in \cite[Lemma 3.34]{Folland} and \cite[Lemma 5.2.9]{Heil}.
	\end{proof}
A Stieltjes differential equation is of the form
	\begin{align}
		\label{Stieltjes-DE}
		\mathrm{d}x(t)=f_0(x(t)) \,\mathrm{d}t+f(x(t))\,\mathrm{d}U(t),\quad
		x(0)=x_0,
	\end{align}
where $U$ is a bounded variation path. Now, setting $U\coloneqq\int_0^\cdot u(t) \mathrm{d}t$, equation \eqref{Stieltjes-DE} is equivalent to \eqref{ODE}. This becomes clearer applying the following lemma. The Stieltjes differential equation perspective can be beneficial as  \eqref{Stieltjes-DE} is covered by rough differential equations \cite{FV10} (smooth case).
	
		\begin{lem}[{\cite[Theorem 3]{Natanson}}]
			\label{KonvRiemannStieltjes}
		Let $f$ be continuous, $u \in L^1([0,T];\mathbb{R}^m)$ and $U(\cdot)=\int_0^{\cdot}u(t) dt$. Then,
		\begin{align*}
			\int_{0}^T f(t) \,\mathrm{d}U(t)\ =	\int_{0}^T f(t) u(t)\,\mathrm{d}t.
		\end{align*}
	\end{lem}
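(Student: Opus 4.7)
The plan is to reduce the Stieltjes integral on the left-hand side to an approximation by Riemann--Stieltjes sums and then identify the limit using the absolute continuity of $U$. First, since $u\in L^1([0,T];\mathbb{R}^m)$, Lemma \ref{ineqBVL1} tells us that $U$ is of bounded variation, so for continuous $f$ the Riemann--Stieltjes integral $\int_0^T f(t)\,\mathrm{d}U(t)$ is well-defined as the common limit of tagged sums
\begin{equation*}
S(P) \;=\; \sum_{i=0}^{n_P-1} f(\xi_i)\bigl(U(t_{i+1})-U(t_i)\bigr)
\end{equation*}
over partitions $P=\{0=t_0<\dots<t_{n_P}=T\}$ with mesh $\|P\|\to 0$, for any choice of tags $\xi_i\in[t_i,t_{i+1}]$.

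Next, I would rewrite each increment of $U$ as a Lebesgue integral of $u$, which is possible because $U$ is absolutely continuous with density $u$. This gives
\begin{equation*}
S(P) \;=\; \sum_{i=0}^{n_P-1} f(\xi_i)\int_{t_i}^{t_{i+1}} u(s)\,\mathrm{d}s \;=\; \sum_{i=0}^{n_P-1}\int_{t_i}^{t_{i+1}} f(\xi_i)\,u(s)\,\mathrm{d}s.
\end{equation*}
Comparing this with $\int_0^T f(s)u(s)\,\mathrm{d}s=\sum_i\int_{t_i}^{t_{i+1}}f(s)u(s)\,\mathrm{d}s$, the difference can be controlled by
\begin{equation*}
\Bigl\|\,S(P)-\int_0^T f(s)u(s)\,\mathrm{d}s\,\Bigr\| \;\leq\; \omega_f(\|P\|)\,\|u\|_{L^1([0,T])},
\end{equation*}
where $\omega_f$ denotes the modulus of continuity of $f$ on the compact interval $[0,T]$. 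Since $f$ is uniformly continuous on $[0,T]$, $\omega_f(\|P\|)\to 0$ as $\|P\|\to 0$, and hence $S(P)\to\int_0^T f(s)u(s)\,\mathrm{d}s$, yielding the claimed identity.

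The main obstacle is more technical than conceptual: one has to justify that the Riemann--Stieltjes integral against a merely absolutely continuous (not Lipschitz or $C^1$) integrator $U$ is indeed a well-defined limit independent of the choice of tags, and that $u$, being only $L^1$, still controls the error uniformly. An alternative route that bypasses this subtlety is measure-theoretic: one interprets $\int_0^T f\,\mathrm{d}U$ as a Lebesgue--Stieltjes integral, notes that the signed vector measure $\mathrm{d}U$ has Radon--Nikodym density $u$ with respect to Lebesgue measure by absolute continuity of $U$, and then invokes the change-of-density formula to directly obtain $\int_0^T f(s)\,\mathrm{d}U(s)=\int_0^T f(s)u(s)\,\mathrm{d}s$. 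Either route gives the statement, and the cited result of Natanson essentially packages the first approach.
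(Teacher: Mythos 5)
Your proof is correct. The paper itself gives no argument for this lemma---it simply cites Natanson---so there is nothing to compare against beyond noting that your direct computation is the standard proof of the cited result. One small remark: the ``main obstacle'' you flag (well-definedness of the Riemann--Stieltjes integral against a merely absolutely continuous integrator) is in fact resolved by your own estimate, since the bound $\bigl\|S(P)-\int_0^T f(s)u(s)\,\mathrm{d}s\bigr\|\leq \omega_f(\|P\|)\,\|u\|_{L^1([0,T])}$ shows that \emph{every} tagged sum of mesh $\|P\|$ lies within $\omega_f(\|P\|)\,\|u\|_{L^1([0,T])}$ of the fixed number $\int_0^T f(s)u(s)\,\mathrm{d}s$; this simultaneously proves existence of the limit (independently of the tags) and identifies it, so no separate existence theorem is needed. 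The only cosmetic caveat is that in the paper's applications $f$ is matrix-valued, so the estimate should be read with an operator norm, which changes nothing.
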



Next we introduce signatures of bounded variation paths which are, e.g., drivers of equations like \eqref{Stieltjes-DE}. The theory of signatures is developed beyond the scope of this paper and plays for example a crucial role in the construction of rough path theory \cite{Friz2020, FV10, Lyons1998}. While signatures are often introduced exploiting many algebraic concepts and rough paths theory, we give some insights to these important objects without requiring deeper knowledge in the above mention fields.
\begin{defi}[(Truncated) Signature]\label{def_sig}
Let $X\colon[0,T]\to \mathbb{R}^m$ be a continuous path of bounded variation. Then, the signature $	S_{\cdot,\cdot}(X)\colon [0,T]\times [0,T] \to  \prod_{k=0}^\infty \mathbb{R}^{m^k}$ over an interval $[a,b]$ with $0\leq a <b \leq T$ is given as the infinite vector of Stieltjes integrals
\begin{align*}
S_{a,b}(X)	=\left(\begin{matrix}1 & {X^{(1)}_{a,b}}^\top & {X^{(2)}_{a,b}}^\top  & \ldots \end{matrix}\right)^\top
\end{align*}
with the iterated integrals $X^{(k)}_{a,b}\in \mathbb{R}^{m^k}$ defined by
\begin{align*}
X^{(k)}_{a,b}=\underset{a<s_1<\cdots < s_{k}<b}{\int \cdots \int}\mathrm{d}X(s_1) \otimes \cdots \otimes \mathrm{d}X({s_{k}}),
\end{align*}
where $\cdot\otimes\cdot$ denotes the Kronecker product of two vectors/matrices. Moreover, we call
\begin{align*}
S^N_{a,b}(X)=\left(\begin{matrix}1 & {X^{(1)}_{a,b}}^\top & {X^{(2)}_{a,b}}^\top  & \ldots & {X^{(N)}_{a,b}}^\top \end{matrix}\right)^\top
		\end{align*}
truncated signature of order $N\in\mathbb N$, where  $S^N_{a,b}(X) \in \prod_{k=0}^N \mathbb{R}^{m^k}= \mathbb{R}^{n}$ with $n=\sum_{k=0}^{N}m^k=\frac{m^{N+1}-1}{m-1}$.
	\end{defi}
While (truncated) signatures are mostly introduced as paths taking values in a (truncated) tensor algebra, the usage of the Kronecker product in Definition \ref{def_sig} avoids this. In the following example, we see that the iterated integrals of $U=\int_{0}^{\cdot} u(s) \, \mathrm{d}s$ are just classical integrals of $u$.
	\begin{bsp}
Let $X=U$ in Definition \ref{def_sig}. By Lemma \ref{KonvRiemannStieltjes}, we have
\begin{align*}
	U_{a,b}^{(k)}=\int_a^b  \int_{a}^{s_{k}} \cdots \int_a^{s_2} u(s_1)\otimes  \cdots  \otimes u(s_{k}) \, \, \mathrm{d}s_1 \cdots \mathrm{d}s_{k}.
\end{align*}
	\end{bsp}
Let $O_{n_1,n_2}\in \mathbb{R}^{n_1 \times n_2}$ denote the zero matrix. Using this notation, we show that the truncated signature satisfies a linear Stieltjes differential equation.
\begin{prop}\label{SigODE}
	Let $e_i\in \mathbb{R}^{m}$ be the $i$th unit vector, $X$ a continuous bounded variation path taking values in $\mathbb R^m$ and $S^N(X)$ its truncated signature of order $N\in\mathbb N$. Let us introduce the dimensions  $n=\frac{m^{N+1}-1}{m-1}, \, \tilde{n}=\frac{m^{N}-1}{m-1}$ as well as matrices $A_i \in \mathbb{R}^{n \times n}$ via
	\begin{align*}
		A_i=\begin{bmatrix}  \tilde{A}_i, {O}_{n,n-\tilde{n}}	 \end{bmatrix},\quad \tilde{A}_i=\begin{bmatrix}
			{O}_{1,\tilde{n}}\\	\blkdiag(\underbrace{e_{i}, \ldots,e_{i}}_{\tilde{n}\text{-times}}  )
		\end{bmatrix}.
	\end{align*}
	Then, $S^N(X)$ satisfies

\begin{align}\label{stieltjes_de}
			{S}^N_{0,t}(X)=\mathbf e_1+\sum_{i=1}^{m}\int_{0}^{t}A_i{S}^N_{0,s}(X)\mathrm{d}{X}_i(s),
		\end{align}
where $\mathbf e_1$ is the first unit vector in $\mathbb R^n$.
\end{prop}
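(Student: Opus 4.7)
The idea is to verify \eqref{stieltjes_de} component-wise, by reading off the level-$k$ block of each side and using the recursive structure of iterated integrals.

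First I would record the recursive identity
\begin{align*}
X^{(k)}_{0,t} \;=\; \int_0^t X^{(k-1)}_{0,s}\otimes \mathrm{d}X(s) \;=\; \sum_{i=1}^m \int_0^t \bigl(X^{(k-1)}_{0,s}\otimes e_i\bigr)\,\mathrm{d}X_i(s),
\end{align*}
which follows by Fubini/associativity of the Kronecker product together with $\mathrm{d}X(s)=\sum_{i=1}^m e_i\,\mathrm{d}X_i(s)$, setting $X^{(0)}_{0,t}=1$. For $k=0$ the right-hand side reduces to $0$, so a constant contribution $\mathbf e_1$ is needed; this is precisely the $\mathbf e_1$ appearing in \eqref{stieltjes_de}.

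Next I would decode the matrices $A_i$. Because $A_i=[\tilde A_i,\,O_{n,n-\tilde n}]$, multiplication by $A_i$ only sees the first $\tilde n$ entries of its argument, i.e.\ $A_i S^N_{0,s}(X)=\tilde A_i S^{N-1}_{0,s}(X)$. The block structure of $\tilde A_i$ then implies: the first row produces the entry $0$, while the block $\blkdiag(e_i,\ldots,e_i)$ (with $\tilde n$ copies, each $e_i\in\mathbb R^m$) sends a vector $v\in\mathbb R^{\tilde n}$ to $v\otimes e_i\in\mathbb R^{\tilde n m}$. Applied to the stacked vector $S^{N-1}_{0,s}(X)=(1,{X^{(1)}_{0,s}}^\top,\ldots,{X^{(N-1)}_{0,s}}^\top)^\top$, whose blocks have lengths $1,m,\ldots,m^{N-1}$, this yields, block by block,
\begin{align*}
\tilde A_i\, S^{N-1}_{0,s}(X) \;=\; \bigl(0,\;e_i^\top,\;(X^{(1)}_{0,s}\otimes e_i)^\top,\;\ldots,\;(X^{(N-1)}_{0,s}\otimes e_i)^\top\bigr)^\top,
\end{align*}
since the $\tilde n m$ rows of the block-diagonal piece partition into segments of lengths $m,m^2,\ldots,m^N$ that exactly match the desired Kronecker products.

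With this identified, the level-$k$ block of $\sum_{i=1}^m\int_0^t A_i S^N_{0,s}(X)\,\mathrm{d}X_i(s)$ is
\begin{align*}
\sum_{i=1}^m \int_0^t \bigl(X^{(k-1)}_{0,s}\otimes e_i\bigr)\mathrm{d}X_i(s) \;=\; X^{(k)}_{0,t},
\end{align*}
by the recursive identity above, for $k=1,\ldots,N$; the first entry of this sum is $0$. Adding $\mathbf e_1$ then recovers the $0$-th component $1$ and leaves the higher levels unchanged, matching $S^N_{0,t}(X)$ exactly. The only genuine care is bookkeeping: correctly reading $\blkdiag(e_i,\ldots,e_i)$ as realising the map $v\mapsto v\otimes e_i$ on the appropriate block, and verifying that the column dimensions $1,m,\ldots,m^{N-1}$ of the blocks in $S^{N-1}_{0,s}(X)$ line up with the $m\times 1$ copies of $e_i$ to produce the level-$k$ block of $S^N_{0,t}(X)$. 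This bookkeeping—rather than any substantive analytic step—is the main obstacle, as Stieltjes integration against $U$ against a BV path is already covered by Lemma \ref{KonvRiemannStieltjes} and the definition of $X^{(k)}$.
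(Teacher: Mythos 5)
Your proposal is correct and follows essentially the same route as the paper's proof: both rest on the recursion $X^{(k)}_{0,t}=\sum_{i=1}^m\int_0^t\bigl(X^{(k-1)}_{0,s}\otimes e_i\bigr)\,\mathrm{d}X_i(s)$, realise $v\mapsto v\otimes e_i$ as multiplication by $\blkdiag(e_i,\ldots,e_i)$, and account for the constant zeroth level via the zero first row and the initial state $\mathbf e_1$, with the trailing zero columns discarding the top level $X^{(N)}$. Your block-by-block reading of $A_iS^N_{0,s}(X)=\tilde A_iS^{N-1}_{0,s}(X)$ is, if anything, slightly more explicit than the paper's level-wise presentation, but it is the same argument.
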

	\begin{proof}
		Let $0\leq k \leq N-1$ and $X^{(0)}\equiv 1$, then
		\begin{align*}
			X^{(k+1)}_{0,t}&=\int_{0}^{t} 
			X^{(k)}_{0,s}
			\otimes \mathrm{d}  X(s)=
			\sum_{i=1}^m\int_{0}^{t} 
			X^{(k)}_{0,s}
			\otimes e_i \, \mathrm{d}  X_i(s)
			\\&=	\sum_{i=1}^m\int_{0}^{t}  \blkdiag(\underbrace{e_{i}, \ldots,e_{i}}_{m^k\text{-times}}) \, 
			X_{0,t}^{(k)}  \, \mathrm{d}  X_i(s)
		\end{align*}
representing $X^{(k)}_{0,s}
\otimes e_i $ by a matrix-vector multiplication. The first entry $(S_{0,\cdot}^N(X))_1=X^{(0)}=1$ on the left-hand side of \eqref{stieltjes_de} also needs to be replicated on the right-hand side which is done by integrating a zero and restoring the first component via the initial condition. This is taken into account by adding a first line of zeros to the matrices $\tilde{A}_i$ and choosing the initial state to be $\mathbf e_1$. Further, to include $X^{(N)}$ on the right-hand side of \eqref{stieltjes_de} we add more zeros to $\tilde{A}_i$ to restore the correct dimensions. The number of zero columns added is  the dimensions of $X^{(N)}$. Hence, we see that \eqref{stieltjes_de} holds.
This concludes the proof.
	\end{proof}
\begin{remark}\label{remark_sig_bil}
	An iteration to compute the matrices $A_i$ is given in \cite{BayerRedmann}. Applying Proposition \ref{SigODE} to $\hat{U}(t)=(t,U(t)^\top)^\top$ yields the bilinear control system \eqref{tr_sig_ode}. The truncated signature $S^N(\hat U)$ plays a crucial role in our algorithm later, as we can use it to approximate the output in \eqref{eq_y}.
\end{remark}

	 We note that the zeroth entry of the signature is $1$ by convention. This is consistent with the literature on signatures. The signature $S_{a,b}(X)$ captures certain information about the path $X|_{[a,b]}$. For instance, the first level $X_{a,b}^{(1)}$ encodes the increment of $X$ on the interval $[a,b]$. Similar interpretations exist for higher levels of the signature $X_{a,b}^{(k)}$ for $k \geq 2$; for example the second level is related to the L\'evy area. A natural question is to what extent the path $X|_{[a,b]}$ can be reconstructed from its signature $S_{a,b}(X)$. We begin by observing that, $S_{a,b}(X)=S_{a,b}(X+c)$ for any constant vector $c$. This invariance property follows directly from the definition of the Stieltjes integral. Paths with the same initial value are uniquely determined by the signature, up to so-called tree-like equivalence \cite{Hambly2010}. However, if at least one component of $X$ is monotone, then $X$ is uniquely determined by $S_{a,b}(X)$ up to an initial value. In our setting, this monotone variable is the time component. This gives rise to the following lemma.
	 
	 \begin{lem}[Uniqueness of the Signature]
	\label{UniquenesSignature}
	 	Let $X,Y\colon[0,T] \to \mathbb{R}^m$ be bounded variation paths and $X(0)=Y(0)=c\in \mathbb{R}^m$. Set $\hat{X}(t) \coloneqq(t,X(t)^\top)^\top$ and $\hat{Y}(t) \coloneqq(t,Y(t)^\top)^\top$, $t\in [0, T]$. Then, $S_{0,T}(\hat X)=S_{0,T}(\hat Y)$ if and only if $X(t)=Y(t)$ for each $t \in [0,T]$.
	 \end{lem}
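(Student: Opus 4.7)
The equivalence splits into two directions, of which one is trivial. If $X \equiv Y$ then $\hat X \equiv \hat Y$, and hence the iterated Stieltjes integrals defining $S_{0,T}(\hat X)$ and $S_{0,T}(\hat Y)$ coincide term by term by Definition \ref{def_sig}. The substantive direction is the converse, and my plan here is to reduce the statement to the Hambly--Lyons uniqueness theorem \cite{Hambly2010}, as already alluded to in the paragraph preceding the lemma.

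First I would observe that both augmented paths $\hat X$ and $\hat Y$ live in $\mathbb{R}^{m+1}$ and share the common initial point $(0,c^\top)^\top$. Applying the Hambly--Lyons theorem to continuous bounded variation paths with identical starting points, the assumption $S_{0,T}(\hat X)=S_{0,T}(\hat Y)$ is equivalent to $\hat X$ and $\hat Y$ being tree-like equivalent, i.e.\ the concatenation $\hat X \star \overleftarrow{\hat Y}$ (run $\hat X$ forward from $0$ to $T$, then $\hat Y$ backward from $T$ to $0$) is a tree-like path. The second step is then to exploit the time augmentation: the first coordinate of $\hat X$ is $t\mapsto t$, which is strictly increasing.

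The heart of the argument is now to deduce that a tree-like path whose first coordinate is strictly monotone on each of the two concatenation halves must be trivial in the sense $\hat X(t)=\hat Y(t)$ for every $t\in[0,T]$. I would formalize this via the parametrization through an $\mathbb{R}$-tree: on an $\mathbb{R}$-tree, any two points are joined by a unique arc, so every point on the trajectory must be visited by the reverse traversal through the \emph{same} points in reverse. For the concatenation $\hat X \star \overleftarrow{\hat Y}$, the time coordinate rises from $0$ to $T$ and then decreases back to $0$; the only way for the reverse leg (running $\overleftarrow{\hat Y}$) to retrace the same trajectory in $\mathbb{R}^{m+1}$ as the forward leg $\hat X$ is that for each time level $t\in[0,T]$, the remaining components agree: $X(t)=Y(t)$. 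Projecting onto the last $m$ coordinates yields the conclusion.

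The main obstacle I anticipate is making this ``monotone coordinate forces equality'' step airtight. A clean execution either unpacks the definition of tree-like equivalence in terms of $\mathbb{R}$-tree parametrizations and argues via the uniqueness of arcs in trees, or, more economically, cites one of the standard refinements of \cite{Hambly2010} available in the rough-paths literature (e.g.\ \cite{Friz2020}) that directly state uniqueness of the signature for time-augmented bounded variation paths. The remainder of the proof—the trivial direction and the application of Hambly--Lyons—should be essentially one line each.
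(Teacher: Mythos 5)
Your proposal is correct in substance, but note that the paper does not prove this lemma at all: its ``proof'' is a one-line citation to Lemma 2.6 of Cuchiero et al.\ \cite{Cuchiero2022}, so you are supplying an argument where the authors outsource one. Your route --- trivial direction by definition, then Hambly--Lyons \cite{Hambly2010} to convert equality of signatures into tree-likeness of the concatenation $\hat X \star \overleftarrow{\hat Y}$, then exploiting the strictly increasing time coordinate --- is exactly the standard argument behind the cited result, so the two approaches are the same in spirit; yours just makes the mechanism visible. The one step you rightly flag as delicate, ``a tree-like concatenation with monotone time coordinate forces $\hat X = \hat Y$,'' is cleaner if you argue via reduced representatives rather than via retracing on the $\mathbb{R}$-tree: a path with a strictly increasing coordinate can contain no tree-like excursion (any such excursion would have to return to its starting point, contradicting strict monotonicity of the first component), hence $\hat X$ and $\hat Y$ are both tree-reduced; by the Hambly--Lyons uniqueness of the reduced representative within a tree-like equivalence class they coincide up to an increasing reparametrization $\sigma$, and comparing first coordinates gives $t=\sigma(t)$, i.e.\ $\sigma=\mathrm{id}$ and $\hat X\equiv\hat Y$. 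Also make sure you state the intermediate step explicitly: $S_{0,T}(\hat X)=S_{0,T}(\hat Y)$ implies the concatenation has trivial signature via Chen's identity and $S(\overleftarrow{\hat Y})=S(\hat Y)^{-1}$, which is what Hambly--Lyons actually characterizes. With that tightening your argument is complete and self-contained, which is arguably preferable to the paper's bare citation.
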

	 \begin{proof}
	We refer to  Lemma 2.6 in  \cite{Cuchiero2022} for a proof.
	 \end{proof}
	 
	 A more sophisticated, rough path version of this lemma can for example be found in \cite[Corollary 3.10]{Cuchiero23} or in \cite{Boedihardjo2015}.

	\section{Signature approximations of continuous maps}\label{sec_sig_approx}
We interpret the solution of \eqref{ODE} as a function $u\mapsto x(u)$ and use the following notation $x(t; u):=\big(x(u)\big)(t)$, $t\in[0, T]$. We will show that continuous maps related to $u \mapsto x(\cdot;u)$ can be approximated by linear maps of truncated signatures, see in Section \ref{3_2}. Therefore, the continuity of the solution map $u \mapsto x(\cdot;u)$ is an essential property that we discuss in Section \ref{3_1}. It is important to notice that this is a classical topic of interest in ODE or control theory. The parameter-dependent ODE case with $u \in \mathbb{R}^m$ is discussed in \cite{Hale}. Fitting our framework, where $u$ takes values in a function space, continuity results can be found in \cite{Artstein1975, Neustadt1970}. A general control-theoretic framework that also includes \eqref{ODE} is provided in \cite{Sussmann1976}. In this section, we point out that under the assumptions we make to ensure the existence of a global solution of \eqref{ODE}, we obtain more than continuity. In fact, $u \mapsto x(\cdot;u)$ is locally Lipschitz. This can also be concluded from \cite{FV10}, where it is shown that $U \mapsto x(\cdot;U)$ in \eqref{Stieltjes-DE} and hence, by Lemma \ref{ineqBVL1}, also $u \mapsto x(\cdot;u)$ is locally Lipschitz. However, \cite{FV10} assumes bounded and smooth vector fields $f_0$ and $f$ given a general bounded variation driver $U$, which, in the special case of absolutely continuous drivers, imposes unnecessarily assumptions on the vector fields. We do not need any smoothness or boundedness in our arguments below. For the reader's convenience, we present proofs of the local Lipschitz continuity of $u \mapsto x(\cdot;u)$ within our framework.

\subsection{Locally Lipschitz continuous maps of the control process $u$}\label{3_1}
We usually assume $u\in L^q([0,T];\mathbb{R}^m)$ ($q=1, 2$) like in the following lemma which is required for the continuity proof.

	\begin{lem}
		\label{Hilfslemma}
		Let $f_0$ and $f$ in \eqref{ODE} be such that a unique global solution exists. Let either \begin{itemize}
\item[(a)] $\|f_0(x)\|+\|f(x)\|\leq \mathcal C(1+\|x\|)$ and $\|u\|_{L^1([0,T])}\leq R$ or
\item[(b)] $2\langle x, f_0(x)\rangle + \big\|f(x)\big\|^2_F\leq \mathcal C(1+\|x\|^2)$ and $\|u\|_{L^2([0,T])}\leq R$
	\end{itemize}
	be satisfied for some constant $\mathcal C$ and all $x\in\mathbb R^d$,
 where $R>0$ is arbitrary. Given a continuous $f$ and the solution $x=x(t; u)$ of \eqref{ODE}, we find that
		\begin{align*}
		\sup_{t \in [0,T]}	\|f(x(t; u))\|\leq K_R,
		\end{align*}
		where $K_R$ depends on $R$ but is independent of $u$.
	\end{lem}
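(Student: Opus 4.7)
The plan is to derive a uniform a priori bound on $\sup_{t\in[0,T]}\|x(t;u)\|$ depending only on $R$ (and $x_0$, $T$, $\mathcal C$), and then exploit the continuity of $f$: since $f$ is continuous and the trajectory lives in a ball whose radius depends only on $R$, $\|f\|$ attains a finite maximum on this ball, yielding the desired $K_R$.

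For case (a), I would write the mild form $x(t)=x_0+\int_0^t f_0(x(s))\,\mathrm ds+\int_0^t f(x(s))u(s)\,\mathrm ds$, take norms, apply the linear-growth hypothesis on $f_0$ and $f$ and bound
\begin{align*}
\|x(t)\|\le \|x_0\|+\mathcal C T+\mathcal C R+\mathcal C\int_0^t\bigl(1+\|u(s)\|\bigr)\|x(s)\|\,\mathrm ds.
\end{align*}
Grönwall's inequality then gives $\sup_{t\in[0,T]}\|x(t;u)\|\le(\|x_0\|+\mathcal C T+\mathcal C R)\exp(\mathcal C(T+R))=:M_R$, which is independent of $u$.

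For case (b), the natural quantity to differentiate is $\|x(t)\|^2$. Using $\dot x=f_0(x)+f(x)u$ and Young's inequality $2\langle x,f(x)u\rangle\le\|x\|^2\|u\|^2+\|f(x)\|_F^2$, I would combine with hypothesis (b) to obtain
\begin{align*}
\frac{\mathrm d}{\mathrm dt}\|x(t)\|^2\le \mathcal C+\bigl(\mathcal C+\|u(t)\|^2\bigr)\|x(t)\|^2.
\end{align*}
Integrating and applying Grönwall once more yields $\sup_{t\in[0,T]}\|x(t;u)\|^2\le(\|x_0\|^2+\mathcal C T)\exp(\mathcal C T+R^2)$, again a bound $M_R$ depending only on $R$.

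Having established $\|x(t;u)\|\le M_R$ uniformly in $u$ and $t$, continuity of $f$ on the compact ball $\overline B_{M_R}(0)\subset\mathbb R^d$ implies that $\|f\|$ is bounded there by some constant $K_R$, and hence $\sup_{t\in[0,T]}\|f(x(t;u))\|\le K_R$ as claimed. The main subtlety I expect is the correct handling of the Grönwall argument in case (b): one must carefully absorb the control-dependent cross term $2\langle x,f(x)u\rangle$ into a form that matches hypothesis (b) without losing the $L^2$-integrability of $\|u\|^2$, which is what ultimately keeps the exponential factor finite and uniform in $u$.
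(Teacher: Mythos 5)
Your proposal is correct and follows essentially the same route as the paper's proof: an integral (respectively, product-rule) estimate combined with the growth hypothesis, Grönwall's inequality to obtain a control-independent bound on $\sup_t\|x(t;u)\|$ in each case, and then boundedness of the continuous $f$ on the resulting compact ball. No gaps.
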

\begin{proof}
 We moved this proof to Appendix for a better readability of the paper, see Section \ref{sec_proof_Lemma0}.
\end{proof}

\begin{remark}\label{rem_link_sde}
The controlled differential equation in \eqref{ODE} can be linked to stochastic differential equations (SDEs). Setting $u(t)=\frac{\mathrm{d}W(t)}{\mathrm{d}t}$, where $W$ is an $m$-dimensional Wiener process would formally lead to the referred SDE case. However, that control is not a function, but a distribution. Suppose that local Lipschitz continuity of $f_0$ and $f$ as well as the one-sided linear growth condition in Lemma \ref{Hilfslemma}, that is, \begin{align}\label{lin_growth_cond}
2\langle x, f_0(x)\rangle + \big\|f(x)\big\|^2_F\leq \mathcal C(1+\|x\|^2)
\end{align}
for some constant $\mathcal C$ and all $x\in\mathbb R^d$ are given. Then, it is known that an SDE ($u(t)=\frac{\mathrm{d}W(t)}{\mathrm{d}t}$) has a unique global solution according to \cite[Theorem 3.5]{mao}. In fact, this result can immediately be transferred to \eqref{ODE} if $u\in L^2([0, T]; \mathbb R^m)$. This is due to estimate \eqref{est_square} which yields \begin{align}\label{bound_ODE_by_SDE}
2  \langle x,   f_0(x)\rangle+2 \langle x,   f(x)u(t)\rangle \leq 2  \langle x,   f_0(x)\rangle+\big\|f(x)\big\|^2_F + \|x\|^2\|u(t)\|^2
\end{align}
for all $x\in\mathbb R^d$ and $t\in [0, T]$. The left-hand side of \eqref{bound_ODE_by_SDE} occurs in an existence and uniqueness proof of \eqref{ODE} and can be bounded by an expression $2  \langle x,   f_0(x)\rangle+\big\|f(x)\big\|^2_F$ relevant in the SDE case. Therefore, the arguments of Mao \cite{mao} can be used here as well. In fact, requiring only locally Lipschitz continuous vector fields combined with \eqref{lin_growth_cond} is an enormous gain in comparison to the global Lipschitz case, where a unique global solution exists if $u\in L^1([0, T]; \mathbb R^m)$. This is because certain polynomials can now be covered. We refer to Example \ref{ex1} for concrete choices of $f_0$ and $f$ not being globally Lipschitz. Let us finally mention that a unique local solution of \eqref{ODE} exists under the so-called Caratheodory conditions, see for instance \cite[Theorem 5.3.]{Hale}. However, we operate with global solutions only.
\end{remark}
In the following lemma we show that the solution $x$ depends continuously on the control $u$. In particular, $x$ is locally Lipschitz in $u$ under global Lipschitz assumptions on $f_0$ and $f$.
	\begin{them}[$L^1$-case]
		\label{Lemma1}
		Given an arbitrary constant $R>0$ and $u,v\in L^1([0,T] ; \mathbb{R}^m)$ with $\|u\|_{L^1([0,T])}, \|v\|_{L^1([0,T])}\leq R$. Suppose that the coefficients $f_0$ and $f$ are globally Lipschitz meaning that $\|f_0(x)-f_0(z)\|+\|f(x)-f(z)\|\leq L\|x-z\|$ for some $L>0$ and all $x, z\in\mathbb R^d$. Then, we have
		\begin{align*}
			\sup_{t \in [0,T]}\|x(t; u)-x(t; v)\| \leq K \|u-v\|_{L^1([0,T])},
		\end{align*} 
where $K>0$ is a suitable constant depending on $R$.
	\end{them}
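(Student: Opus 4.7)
The plan is to work with the integral form of \eqref{ODE}, compare the two solutions $x(\cdot;u)$ and $x(\cdot;v)$ pointwise in $t$, and then close the resulting inequality via a Gronwall argument with an integrable weight. Writing both solutions in Volterra form and subtracting gives
\begin{align*}
x(t;u)-x(t;v) = \int_0^t \bigl[f_0(x(s;u))-f_0(x(s;v))\bigr]\,\mathrm{d}s + \int_0^t \bigl[f(x(s;u))u(s)-f(x(s;v))v(s)\bigr]\,\mathrm{d}s.
\end{align*}
The standard idea is to split the driving term as $f(x(s;u))u(s)-f(x(s;v))v(s) = [f(x(s;u))-f(x(s;v))]u(s) + f(x(s;v))[u(s)-v(s)]$, so that both parts have a clean interpretation.

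Taking norms and applying the global Lipschitz property of $f_0$ and $f$ to the first and second pieces, and applying Lemma \ref{Hilfslemma} (case (a), which applies because global Lipschitz implies the linear growth bound and $\|v\|_{L^1}\le R$) to bound $\sup_{s\in[0,T]}\|f(x(s;v))\|\le K_R$ uniformly in $v$, I obtain
\begin{align*}
\|x(t;u)-x(t;v)\| \le L\int_0^t \|x(s;u)-x(s;v)\|\bigl(1+\|u(s)\|\bigr)\,\mathrm{d}s + K_R \int_0^t \|u(s)-v(s)\|\,\mathrm{d}s.
\end{align*}
The first term is the Gronwall feedback and the second is the forcing, bounded above by $K_R\|u-v\|_{L^1([0,T])}$ and independent of $t$.

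The core step is then Gronwall's inequality in its integrable-weight form applied to $\varphi(t):=\|x(t;u)-x(t;v)\|$ with weight $a(s)=L(1+\|u(s)\|)\in L^1([0,T])$, which yields
\begin{align*}
\sup_{t\in[0,T]}\varphi(t) \le K_R\|u-v\|_{L^1([0,T])} \exp\Bigl(\int_0^T L\bigl(1+\|u(s)\|\bigr)\,\mathrm{d}s\Bigr) \le K_R\,\mathrm{e}^{L(T+R)}\,\|u-v\|_{L^1([0,T])},
\end{align*}
using $\|u\|_{L^1}\le R$. Setting $K:=K_R\,\mathrm{e}^{L(T+R)}$ gives the claim. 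The only mild obstacle is that the Gronwall coefficient $1+\|u(\cdot)\|$ is not bounded but merely $L^1$; this is handled by the standard measure-theoretic version of Gronwall (or, equivalently, by reducing to the bounded case via a stopping-time/approximation argument), and the bound on $\|u\|_{L^1}$ keeps the resulting exponential uniform in $u,v$ within the prescribed ball of radius $R$.
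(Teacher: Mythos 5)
Your proof is correct and follows essentially the same route as the paper's: write both solutions in integral form, split the bilinear driving term, bound $\|f\|$ along a solution uniformly via Lemma \ref{Hilfslemma} case (a), and close with the integrable-weight form of Gronwall's lemma (which the paper supplies as Lemma \ref{gron_int}, so the $L^1$-weight issue you flag is already covered). The only immaterial difference is that you pair the Lipschitz increment with $u(s)$ and the forcing with $f(x(s;v))$, whereas the paper uses the mirror-image split $f(x(s))(u(s)-v(s))+(f(x(s))-f(z(s)))v(s)$; both yield the same constant $K_R\expn^{L(T+R)}$.
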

	\begin{proof}
The proof can be found in Section \ref{sec_proof_Lemma1}.
	\end{proof}
While the result of Theorem \ref{Lemma1} might be known, the generalization to one-sided Lipschitz vector field might be less familiar. The statement of Theorem \ref{Lemma1} still holds true with less assumptions on the vector fields $f_0$ and $f$ if $u \in L^2([0,T] ; \mathbb{R}^m)$.
		\begin{them}[$L^2$-case]
			\label{Lemma2}
			Given an arbitrary constant $R>0$ and $u,v\in L^2([0,T]; \mathbb{R}^m)$ with $\|u\|_{L^2([0,T])}, \|v\|_{L^2([0,T])}\leq R$. Suppose that the coefficients $f_0$ and $f$ are locally Lipschitz continuous. In addition, let us assume that these coefficients satisfy a one-sided global Lipschitz property, that is,
			\begin{align*}
			2\,\langle x-z,f_0(x)-f_0(z)\rangle+\|f(x) -f(z)\|_F^2 \leq L\|x-z\|^2
		\end{align*}
		holds for some constant $L$ and all $x, z\in\mathbb R^d$. Then, we have
			\begin{align*}
			\sup_{t \in [0,T]}\|x(t; u)-x(t; v)\| \leq K \|u-v\|_{L^2([0,T])},
		\end{align*}
		where $K$ is a suitable constant depending on $R$.
		\end{them}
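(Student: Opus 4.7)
The plan is to adapt the $L^1$-proof of Theorem \ref{Lemma1} by working on the squared error $\|e(t)\|^2$ where $e(t)=x(t;u)-x(t;v)$, so that the one-sided Lipschitz hypothesis can be exploited directly. This energy-type estimate is standard in the SDE literature (cf.\ Mao, as referenced in Remark \ref{rem_link_sde}) and is the natural substitute once global Lipschitz continuity is relaxed.

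First I would verify that Lemma \ref{Hilfslemma}(b) is applicable: setting $z=0$ in the one-sided Lipschitz condition and using $2\langle x,f_0(0)\rangle\leq \|x\|^2+\|f_0(0)\|^2$ as well as $\|f(x)\|_F^2\leq 2\|f(x)-f(0)\|_F^2+2\|f(0)\|_F^2$ yields a constant $\mathcal{C}$ such that $2\langle x,f_0(x)\rangle+\|f(x)\|_F^2\leq \mathcal{C}(1+\|x\|^2)$. Hence $\sup_{t\in[0,T]}\|f(x(t;v))\|\leq K_R$ uniformly for $\|v\|_{L^2([0,T])}\leq R$.

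Next I would differentiate $\|e(t)\|^2$ and split the cross terms carefully to pair them with the one-sided Lipschitz bound. Writing
\begin{align*}
\tfrac{d}{dt}\|e\|^2 &= 2\langle e,f_0(x)-f_0(z)\rangle+2\langle e,(f(x)-f(z))u\rangle+2\langle e,f(z)(u-v)\rangle,
\end{align*}
I would apply Young's inequality in the form $2\|e\|\|f(x)-f(z)\|_F\|u\|\leq \|f(x)-f(z)\|_F^2+\|e\|^2\|u\|^2$ to the middle term. The combination with the first term then collapses via the one-sided Lipschitz assumption into $L\|e\|^2$. For the last term I would use $2\langle e,f(z)(u-v)\rangle\leq \|e\|^2+\|f(z)\|^2\|u-v\|^2$ and then invoke the uniform bound $\|f(z(t))\|\leq K_R$ from the previous step. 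This yields an inequality of the form
\begin{align*}
\tfrac{d}{dt}\|e(t)\|^2\leq (L+1+\|u(t)\|^2)\,\|e(t)\|^2+K_R^2\,\|u(t)-v(t)\|^2.
\end{align*}

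Finally I would apply Grönwall's inequality together with $e(0)=0$, obtaining
\begin{align*}
\|e(t)\|^2\leq K_R^2\exp\!\Big(\!(L+1)T+\int_0^T\|u(s)\|^2\,\mathrm{d}s\Big)\|u-v\|_{L^2([0,T])}^2,
\end{align*}
and since $\|u\|_{L^2([0,T])}\leq R$ the exponential factor is a constant depending only on $R$, $L$ and $T$. Taking the square root gives the desired Lipschitz estimate with an $R$-dependent constant $K$. The main obstacle is the bookkeeping for the Young-type splitting: one has to pair the $\|f(x)-f(z)\|_F^2$ contribution with the drift term precisely so that the one-sided Lipschitz hypothesis absorbs both simultaneously, while keeping the leftover $\|e\|^2\|u(t)\|^2$ factor $L^1$-integrable in $t$ under only the $L^2$-bound on $u$; every other step is routine once this splitting is arranged.
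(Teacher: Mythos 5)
Your main argument is essentially the paper's own proof: an energy estimate on $\|x(t;u)-x(t;v)\|^2$ via the product rule, a splitting of the bilinear term so that Young's inequality pairs $\|f(x)-f(z)\|_F^2$ with the drift difference (both then absorbed simultaneously by the one-sided Lipschitz hypothesis), a uniform bound $\|f\|\leq K_R$ along trajectories from Lemma \ref{Hilfslemma}(b), and Gronwall. The only difference is cosmetic: you split $f(x)u-f(z)v=(f(x)-f(z))u+f(z)(u-v)$, whereas the paper uses $f(x)(u-v)+(f(x)-f(z))v$; this merely swaps the roles of $u$ and $v$ in the Gronwall exponent and in which trajectory the $K_R$-bound is evaluated on, and both variants are covered by the hypothesis $\|u\|_{L^2([0,T])},\|v\|_{L^2([0,T])}\leq R$. (Minor point: one should take $L\geq 0$ without loss of generality before applying Gronwall, as the paper notes.)

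One auxiliary claim in your first step does not hold as written: that the one-sided global Lipschitz condition implies the one-sided linear growth condition needed for Lemma \ref{Hilfslemma}(b). Setting $z=0$ controls the combination $2\langle x,f_0(x)-f_0(0)\rangle+\|f(x)-f(0)\|_F^2$ with coefficient exactly one on the Frobenius term, but your estimate $\|f(x)\|_F^2\leq 2\|f(x)-f(0)\|_F^2+2\|f(0)\|_F^2$ produces that term with coefficient two; the extra copy can only be bounded by $L\|x\|^2-2\langle x,f_0(x)-f_0(0)\rangle$, which is not $O(1+\|x\|^2)$ when $\langle x,f_0(x)\rangle$ is strongly negative (e.g.\ $f_0(x)=-x^{\circ 3}$ gives a quartic bound), so the implication does not follow from the steps you list. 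It does go through if $f(0)=0$, or if one loses a factor (one obtains $2\langle x,f_0(x)\rangle+\tfrac12\|f(x)\|_F^2\leq\mathcal C(1+\|x\|^2)$, which is not literally hypothesis (b)). This is tangential to the core estimate: the paper's standing assumption that \eqref{ODE} has a unique global solution, together with the growth condition discussed in Remark \ref{rem_link_sde} and assumed explicitly in Corollary \ref{cor1}, is what licenses the appeal to Lemma \ref{Hilfslemma}(b). You should either assume the one-sided linear growth condition separately or justify it under additional structure, rather than deriving it from the one-sided Lipschitz property alone.
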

		\begin{proof}
The proof is stated in Section \ref{sec_proof_Lemma2}.
		\end{proof}
\begin{bsp}\label{ex1}
Below, let us construct an example that satisfies the requirements of Theorem \ref{Lemma2} and hence also the ones of Lemma \ref{Hilfslemma} case (b).
Suppose that $m=1$. We set $f_0(x) = \mathcal A x - x^{\circ 3}$ and $f(x) = x^{\circ 2}$, where $\mathcal A\in\mathbb R^{d\times d}$ and $\circ$ indicates component-wise powers.  Both $f_0$ and $f$ are locally Lipschitz continuous, because they are polynomials and hence have continuous derivatives. Moreover, we observe that
\begin{align*}
 -2\langle x- z,  x^{\circ 3} - z^{\circ 3}\rangle&=-2\sum_{i=1}^d (x_i^4 +z_i^4 - z_i x_i^3-x_i z_i^3)
 =-2\sum_{i=1}^d (x_i-z_i)^2 (x_i^2 +z_i^2 + z_i x_i)\\&\leq -\sum_{i=1}^d (x_i-z_i)^2 (x_i +z_i)^2
\end{align*}
for $x=(x_1, \dots, x_d)^\top$ as well as $z=(z_1, \dots, z_d)^\top$ and further find that\begin{align*}
\|x^{\circ 2}-z^{\circ 2}\|^2 =\sum_{i=1}^d(x_i^2-z_i^2)^2= \sum_{i=1}^d(x_i-z_i)^2(x_i+z_i)^2.
               \end{align*}
Consequently, we have
\begin{align*}
			2\,\langle x-z,f_0(x)-f_0(z)\rangle+\|f(x) -f(z)\|^2 \leq 2\langle x-z, \mathcal A(x-z)\rangle\leq L\|x-z\|^2
		\end{align*}
for a suitable constant $L$. Therefore, the assumptions of Theorem \ref{Lemma2} are satisfied showing that interesting polynomial nonlinearities are covered.
\end{bsp}

\begin{remark}\label{rem1}
\begin{itemize}
 \item The reason why local Lipschitz continuity for $f_0$ and $f$ is assumed in Theorem \ref{Lemma2} is the need for a global solution to exist, see Remark \ref{rem_link_sde}.
\item The local Lipschitz property of the map $u\mapsto x(\cdot, u)$ established in Theorems  \ref{Lemma1} and \ref{Lemma2} directly transfers to $u\mapsto y(\cdot, u)$. Let $R>0$ be arbitrary and $\|u\|_{L^q([0,T])}, \|v\|_{L^q([0,T])}\leq R$ for either $q=1$ if the assumptions of Theorem \ref{Lemma1} hold or $q=2$ if the conditions of Theorem \ref{Lemma2} are satisfied. Then, we know by  \eqref{x_in_ball} and \eqref{x_in_ball2} that $x(\cdot, u)$ and $x(\cdot, v)$ take values in a ball with finite radius independent of the particular control. Now that $c$ is Lipschitz continuous on that ball, we find a constant $L_c>0$ such that \begin{align*}
			\sup_{t \in [0,T]}\|y(t; u)-y(t; v)\|&=\sup_{t \in [0,T]}\|c(x(t; u))-c(x(t; v))\|\leq L_c \sup_{t \in [0,T]}\|x(t; u)-x(t; v)\| \\&\leq L_c K \|u-v\|_{L^q([0,T])}
		\end{align*}
for $\|u\|_{L^q([0,T])}, \|v\|_{L^q([0,T])}\leq R$ and $q=1, 2$ (depending on the assumptions on the vector fields).
\end{itemize}
\end{remark}
The next section addresses approximations of continuous functionals and maps of the control $u$ using signatures.

\subsection{Universal approximations by the signature}\label{3_2}

		We further need to consider the linear functionals on $\prod_{k=0}^\infty \mathbb{R}^{m^k}$ which we denote by $\mathcal W_m\coloneqq\bigoplus_{k=0}^\infty \mathbb{R}^{m^k}$.
		This corresponds to assigning a weight to each individual iterated integral, which are collected in the signature. In the common literature this is done via a pairing of elements from dual spaces and the signature. In our setting, namely the real, finite dimensional case, this boils down to an inner product. The weight $\ell \in \mathcal{W}_{m}$ is in common literature named as word and per definition unequal to $0$ in only finitely many components.
		\begin{lem}
			\label{Hilfslemma2}
		Let $q \geq 1$ and $x \in L^q([0,T];\mathbb{R}^m)$, further let $X(t)\coloneqq \int_{0}^{t} x(s) \, \mathrm{d}s$ and $\hat X(t)\coloneqq (t,X(t)^\top)^\top$, $t\in[0, T]$. Then, the map $h\colon L^q([0,T];\mathbb{R}^m) \to \mathbb{R}, \, {x} \mapsto \langle \ell, S_{0,T}(\hat{X})  \rangle$ is continuous for every $\ell \in \mathcal{W}_{m+1}$.
	\end{lem}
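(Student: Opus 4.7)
The plan rests on two observations: first, an element $\ell \in \mathcal{W}_{m+1} = \bigoplus_{k=0}^{\infty}\mathbb{R}^{(m+1)^k}$ is by definition nonzero in only finitely many components, so
\[
h(x) = \langle \ell, S_{0,T}(\hat X)\rangle = \sum_{k=0}^{N}\sum_{\mathbf i \in \{1,\ldots,m+1\}^k} \ell^{(k)}_{\mathbf i}\; \hat X^{(k)}_{0,T}[\mathbf i]
\]
is a \emph{finite} linear combination of iterated integrals; second, each iterated integral is multilinear in the components of the integrand $\hat u(t) = (1, x(t)^\top)^\top$, and these components are either the constant $1$ or a component of $x$. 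By linearity it therefore suffices to establish continuity of a single iterated integral indexed by a fixed $\mathbf i = (i_1,\ldots,i_k)$.

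Using Lemma \ref{KonvRiemannStieltjes} to rewrite the Stieltjes integrals as Lebesgue integrals, the chosen iterated integral reads
\[
I_{\mathbf i}(x) = \int_{0<s_1<\cdots<s_k<T} \prod_{j=1}^{k} f^{(x)}_j(s_j)\,\mathrm ds_1\cdots\mathrm ds_k, \qquad
f^{(x)}_j = \begin{cases}1 & \text{if } i_j = 1,\\ x_{i_j-1} & \text{if } i_j \geq 2,\end{cases}
\]
which is a multilinear functional of the tuple $(f^{(x)}_1,\ldots,f^{(x)}_k)$. Given a sequence $x_n \to x$ in $L^q([0,T];\mathbb R^m)$, multilinearity yields the telescoping identity
\[
I_{\mathbf i}(x_n) - I_{\mathbf i}(x) = \sum_{j=1}^{k} I\bigl(f^{(x_n)}_1,\ldots,f^{(x_n)}_{j-1},\,f^{(x_n)}_j - f^{(x)}_j,\,f^{(x)}_{j+1},\ldots,f^{(x)}_k\bigr),
\]
where all summands with $i_j = 1$ vanish automatically.

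Each surviving summand is controlled by the elementary simplex estimate
\[
\Bigl|\int_{0<s_1<\cdots<s_k<T}\prod_{j=1}^{k} g_j(s_j)\,\mathrm ds\Bigr| \;\le\; \int_{[0,T]^k}\prod_{j=1}^k |g_j(s_j)|\,\mathrm ds \;=\; \prod_{j=1}^{k}\|g_j\|_{L^1([0,T])},
\]
combined with the continuous embedding $\|g\|_{L^1([0,T])} \le T^{1-1/q}\|g\|_{L^q([0,T])}$ on the bounded interval. Since $\|x_n\|_{L^q}$ is bounded (being convergent), each telescope term is dominated by a constant, independent of $n$, times $\|x_n - x\|_{L^q}$. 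Hence $I_{\mathbf i}(x_n) \to I_{\mathbf i}(x)$, and summing the finitely many nonzero contributions of $\ell$ gives $h(x_n) \to h(x)$, proving continuity.

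There is no real obstacle here: the argument reduces to routine Tonelli/simplex bounds and the embedding $L^q([0,T]) \hookrightarrow L^1([0,T])$. The only care required is the multi-index bookkeeping to identify which factors in the iterated integral depend on $x$; as a byproduct the same estimate yields the stronger statement that $h$ is locally Lipschitz on $L^q([0,T];\mathbb R^m)$.
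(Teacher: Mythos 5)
Your proof is correct, but it takes a genuinely different route from the paper. The paper also first reduces $\langle \ell, S_{0,T}(\hat X)\rangle$ to a finite linear functional of the truncated signature $S^N_{0,T}(\hat X)$, but then finishes in one line by citing the local Lipschitz continuity of the truncated signature map with respect to the bounded-variation seminorm (Proposition 7.63 in Friz--Victoir) together with Lemma \ref{ineqBVL1}, which gives $\|\hat X^1 - \hat X^2\|_{BV} \le \|x^1 - x^2\|_{L^1([0,T])}$, and then the embedding $L^q \hookrightarrow L^1$ on $[0,T]$. You instead unpack each iterated integral via Lemma \ref{KonvRiemannStieltjes} into an explicit multilinear simplex integral of the components of $(1, x(\cdot)^\top)^\top$, telescope the difference, and bound each term by Tonelli and H\"older. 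Both arguments are sound and both ultimately deliver local Lipschitz continuity of $h$ on $L^q$-balls (the Friz--Victoir estimate is likewise Lipschitz on sets of bounded $BV$-norm); what your version buys is self-containedness and an explicit constant without appealing to rough-path machinery, at the cost of the multi-index bookkeeping, whereas the paper's version is shorter and reuses a standard result it needs anyway. One cosmetic remark: since $L^q([0,T];\mathbb{R}^m)$ is a metric space, your sequential formulation of continuity is of course equivalent to the topological one, and your uniform bound on $\sup_n \|x_n\|_{L^q}$ is exactly what makes the telescoped constant independent of $n$.
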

	\begin{proof}
Suppose that  $x^i \in L^q([0,T];\mathbb{R}^m)$ and that $\hat{X}^i$ is the time extended version of ${X}^i(\cdot)\coloneqq \int_{0}^{\cdot } x^i(t) \, \mathrm{d}t$ for $i=1, 2$. Since $\ell\in\mathcal W_{m+1}$ has only finitely many entries different from zero, there is an order $N\in\mathbb N$ and an $l\in \mathbb R^n$ with $n=n(N, m)$, such that $\langle\ell, S_{0,T}({\hat{X}^i})  \rangle=\langle l, S^N_{0,T}({\hat{X}^i})  \rangle$.
Now, it follows by Proposition 7.63 in \cite{FV10} and Lemma \ref{ineqBVL1} that
		\begin{align*}
			\vert\langle\ell, S_{0,T}({\hat{X}^1})- S_{0,T}({\hat{X}^2})  \rangle\vert &\leq \|l\| \|S^N_{0,T}({\hat{X}^1})- S^N_{0,T}({\hat{X}^2})  \|\leq K \|\hat{X}^1-\hat{X}^2\|_{BV} \\
			&\leq K \|x^1-x^2\|_{L^1([0,T])}\leq \tilde K\|x^1-x^2\|_{L^q([0, T])} \to 0
		\end{align*}
for ${\|x^1-x^2\|_{L^q([0, T])} \to 0}$, where $K, \tilde K>0$ are suitable constants.
	\end{proof}
		Next we introduce a universal approximation theorem (UAT) in the spirit of the classical UAT from rough path theory. We extend the integral $U(t)=\int_0^t u(s)\, \mathrm{d}s$, $t\in[0, T]$, of the control $u$ by including a time component leading to $\hat{U}(t) \coloneqq (t,U(t)^\top)^\top$. 
		\begin{them}[Universal Approximation Theorem]
			\label{UAT}
			Given $q\geq 1$, let $\mathcal{K} \subset L^q([0,T];\mathbb{R}^{m})$ be compact and $g \colon L^q([0,T];\mathbb{R}^{m}) \to \mathbb{R}$ be continuous. Then, for every $\varepsilon>0$, there exists an $\ell \in \mathcal W_{m+1}$, such that
			\begin{align*}
				\sup_{{u} \in \mathcal{K}} |g({u})-\langle \ell, S_{0,T}(\hat{U})  \rangle|< \varepsilon.
			\end{align*} 
		\end{them}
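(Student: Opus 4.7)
The plan is to invoke the Stone--Weierstrass theorem on the compact metric space $\mathcal{K}$. Define
$$\mathcal{A} := \bigl\{ u \mapsto \langle \ell, S_{0,T}(\hat{U})\rangle : \ell \in \mathcal{W}_{m+1}\bigr\} \subset C(\mathcal{K}, \mathbb{R}),$$
where the inclusion follows from Lemma \ref{Hilfslemma2}. The task reduces to verifying that $\mathcal{A}$ is a subalgebra of $C(\mathcal{K}, \mathbb{R})$ that contains the constants and separates the points of $\mathcal{K}$. Once this is done, density of $\mathcal{A}$ in $C(\mathcal{K}, \mathbb{R})$ in the supremum norm yields the desired $\ell$ for any continuous $g$.

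Three of the hypotheses should be immediate. Linearity of the pairing makes $\mathcal{A}$ a vector space. Since the zeroth component of every signature equals $1$ by convention, choosing $\ell$ supported only on that coordinate reproduces an arbitrary constant, so $\mathbb{R} \subset \mathcal{A}$. For point separation, let $u^1 \ne u^2$ in $\mathcal{K}$; by Lebesgue differentiation the integrals $U^1, U^2$ are distinct absolutely continuous paths on $[0,T]$, both starting at $0$. Appending the time coordinate and applying Lemma \ref{UniquenesSignature} (whose hypothesis $\hat{U}^1(0) = \hat{U}^2(0) = 0$ is satisfied) I obtain $S_{0,T}(\hat{U}^1) \neq S_{0,T}(\hat{U}^2)$, so some component of the two signatures differs and a suitable $\ell$ selecting that component separates $u^1$ from $u^2$.

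The main obstacle is stability of $\mathcal{A}$ under pointwise products. This is the shuffle identity from rough-path theory: for any $\ell_1, \ell_2 \in \mathcal{W}_{m+1}$ one needs to produce an $\ell' \in \mathcal{W}_{m+1}$ satisfying
$$\langle \ell_1, S_{0,T}(\hat{U})\rangle \cdot \langle \ell_2, S_{0,T}(\hat{U})\rangle = \langle \ell', S_{0,T}(\hat{U})\rangle$$
for every bounded variation driver $\hat{U}$. For absolutely continuous paths the identity can be established by iteratively applying Fubini's theorem and integration by parts to products of the iterated integrals appearing in the signature; the resulting combinatorial rule interleaves the index sequences of $\ell_1$ and $\ell_2$ and clearly preserves the property of having only finitely many nonzero entries, so $\ell' \in \mathcal{W}_{m+1}$. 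A precise statement and proof are available in \cite{FV10}.

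With the algebra, unit, and separation conditions in place, Stone--Weierstrass asserts that $\mathcal{A}$ is dense in $C(\mathcal{K}, \mathbb{R})$ with respect to the supremum norm. Since $g|_{\mathcal{K}}$ lies in $C(\mathcal{K}, \mathbb{R})$, for any $\varepsilon > 0$ there exists $\ell \in \mathcal{W}_{m+1}$ such that $\sup_{u \in \mathcal{K}} |g(u) - \langle \ell, S_{0,T}(\hat{U})\rangle| < \varepsilon$, which is exactly the claim.
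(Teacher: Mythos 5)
Your proposal is correct and follows essentially the same route as the paper: Stone--Weierstrass applied to the algebra of linear functionals of the signature, with continuity from Lemma \ref{Hilfslemma2}, point separation from Lemma \ref{UniquenesSignature}, and closure under products via the shuffle-type identity. The only difference is that you cite the shuffle identity from \cite{FV10} where the paper proves it self-containedly by induction on the total number of iterated integrals using the product rule; both are acceptable.
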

		\begin{proof}
		The proof is done via an application of the Stone-Weierstrass theorem. We need to prove that 
		\begin{align*}
			\mathcal{A}\coloneqq \{{u} \mapsto \langle \ell, S_{0,T}(\hat{U})  \rangle | \ell \in \mathcal W_{m+1}, {u} \in L^q([0,T];\mathbb{R}^{m}) \}
		\end{align*}
		is a subalgebra of $C(L^q([0,T];\mathbb{R}^{m});\mathbb{R})$ which is point-separating and contains a non-zero constant function. \\From Lemma \ref{Hilfslemma2} the map $u \mapsto \langle \ell, S_{0,T}(\hat{U})  \rangle$ is continuous for every $\ell \in \mathcal W_{m+1}$ and therefore $\mathcal{A}$ is a subset of $C(L^q([0,T];\mathbb{R}^{m});\mathbb{R})$. Now, by Lemma \ref{UniquenesSignature}, we know that the signature of the time-extended path $\hat U$ uniquely determines $U$. Moreover, $U$ uniquely determines the control $u$ in $ L^q([0,T];\mathbb{R}^{m})$, $q\geq 1$. This means that if we have two controls $u$ and $v$ and the associated integrals $U(t)=\int_0^t u(s)\, \mathrm{d}s$ and $V(t)=\int_0^t v(s)\, \mathrm{d}s$, then $U(t)=V(t)$ for all $t\in [0, T]$ implies that $u(t)=v(t)$ for almost all $t\in [0, T]$. Therefore, $\mathcal A$ is point-separating.
$\mathcal A$ contains the constant non-zero function ${u} \mapsto 1$ which is obtained by choosing $\ell=(1, 0, 0, \dots)$.
This leaves us to prove that $\mathcal{A}$ is an algebra, which means that for $\ell_1, \ell_2 \in \mathcal W_{m+1}$ there exists an $\ell_3 \in \mathcal W_{m+1}$, such that
		\begin{align}\label{to_be _shown}
			\langle \ell_1, S_{0,T}(\hat{U})  \rangle\langle \ell_2, S_{0,T}(\hat{U})  \rangle=\langle \ell_3, S_{0,T}(\hat{U})  \rangle.
		\end{align}
		This boils down to the question whether the product of iterated integrals is a linear combination of iterated integral again. This can be proved by induction over the total number of integrals. It is enough to consider $\ell_1, \, \ell_2 \in \mathcal W_{m+1}$, which only contain one component different from $0$ and w.l.o.g we assume these coefficients to be equal to $1$. We begin the induction considering the product of first order iterated integrals. The product rule yields
		\begin{align*}
		\int_{0}^{t}\mathrm{d}\hat U_i(r_1)\cdot \int_{0}^{t}\mathrm{d}\hat U_j(r_1)=\int_{0}^t\int_{0}^{r_1}\mathrm{d}\hat U_i(r_2)\mathrm{d}\hat U_j(r_1)+\int_{0}^{t}\int_{0}^{r_1}\mathrm{d}\hat U_j(r_2)\mathrm{d}\hat U_i(r_1).
	\end{align*}
Setting $t=T$ leads to an identity like in \eqref{to_be _shown}.
For the induction step define
\begin{align*}
Z(t)&=\langle \ell_1, S_{0,t}(\hat{U})  \rangle=\int_{0}^{t} \int_{0}^{r_{q-1}}\cdots \int_{0}^{r_{1}}\mathrm{d}\hat U_{i_0}(r)\,\mathrm{d}\hat U_{i_1}(r_1) \cdots \mathrm{d}\hat U_{i_{q-1}}(r_{q-1}), \\Y(t)&=\langle \ell_2, S_{0,t}(\hat{U})  \rangle=\int_{0}^{t} \int_{0}^{v_{k-1}}\cdots \int_{0}^{v_{1}}\mathrm{d}\hat U_{j_0}(v)\,\mathrm{d}\hat U_{j_1}(v_1) \cdots \mathrm{d}\hat U_{j_{k-1}}(v_{k-1})
\end{align*}
with total number of integrals equal to $q+k$. Then, by the product formula, we get
		\begin{align*}
			Y(t)Z(t)&=\int_0^t Y(s) \, \mathrm{d}{Z}(s)+\int_0^t Z(s) \, \mathrm{d}{Y}(s)\\
			&=\int_0^t  \Big(Y(s) \int_{0}^{s}\int_{0}^{r_{q-2}}\cdots \int_{0}^{r_{1}}\mathrm{d}\hat U_{i_0}(r)\,\mathrm{d}\hat U_{i_1}(r_1) \cdots \mathrm{d}\hat U_{i_{q-2}}(r_{q-2}) \Big) \, \mathrm{d}\hat {U}_{i_{q-1}}(s)\\&\quad +\int_0^t  \Big(Z(s)  \int_{0}^{s} \int_{0}^{v_{k-2}} \cdots \int_{0}^{v_{1}}\mathrm{d}\hat U_{j_0}(v)\,\mathrm{d}\hat U_{j_1}(v_1) \cdots \mathrm{d}\hat U_{j_{k-2}}(v_{k-2})\Big) \,\mathrm{d}\hat {U}_{j_{k-1}}(s)\\
			&=\int_0^t  \langle \tilde \ell_3, S_{0,s}(\hat{U})  \rangle  \, \mathrm{d}\hat {U}_{i_{q-1}}(s) +\int_0^t   \langle \bar \ell_3, S_{0,s}(\hat{U})  \rangle \,\mathrm{d}\hat {U}_{j_{k-1}}(s)=\langle  \ell_3, S_{0,t}(\hat{U})  \rangle,
		\end{align*}
where we used the induction hypothesis twice to the product of iterated integrals with total number of integrals equals $q+k-1$ for some $\tilde \ell_3, \bar \ell_3, \ell_3 \in\mathcal W_{m+1}$.
		\end{proof}
By Theorems \ref{Lemma1} and \ref{Lemma2} we know that $u \mapsto x(\cdot;u)$ is locally Lipschitz continuous. By Remark \ref{rem1}, the same is true for $u \mapsto y(\cdot;u)$ under the assumptions we made on $f_0$ and $f$. Therefore, a potential functional $g$ of interest could be the $i$th component of $x$ or $y$ evaluated at $T$, i.e., $g(u)=x_i(T; u)$ or $g(u)=y_i(T; u)$.  Theorem \ref{UAT} immediately guarantees the existence of a function $\ell_i \colon [0,T] \to \mathcal W_{m+1}$, such that
				\begin{align*}
			y_i(t; u)\approx \langle \ell_i(t), S_{0,t}(\hat{U})  \rangle.
		\end{align*}
However, we would rather like to approximate the functions $x$ or $y$ on $[0, T]$ with a functional being independent of time.
Therefore, the next non-trivial step is to find an $\ell_i\in\mathcal W_{m+1}$ giving us
		\begin{align}\label{intu_UAT}
y_i(t; u)\approx \langle \ell_i, S_{0,t}(\hat{U})  \rangle
		\end{align}
for all $t\in[0, T]$ on a compact set of controls $u$. Indeed this is possible following an approach of \cite{Cuchiero23} who use stopped drivers in the context of proving a universal approximation theorem applicable to rough differential equations.
		\begin{defi}
			Given a function $h\colon[0,T] \to \mathbb{R}^m$ we define the stopped $h$ at time $s \in [0,T]$ as $h^s:[0,T] \to \mathbb{R}^m$ given by
			\begin{align*}
				h^s(t)\coloneqq h(t)\boldsymbol{1}_{[0, s]}(t),
			\end{align*} 
where $\boldsymbol{1}$ denotes the indicator function.
		\end{defi}
	Then, the strategy is to exploit that
	$x(s; u)=x(T;u^s)$ and to subsequently apply Theorem \ref{UAT} to $x(T;u^s)$ or the associated output. We formulate a theorem in the following that ensures \eqref{intu_UAT}. Let us also refer to \cite[Theorem 3.13]{Cuchiero23}, where such a result is established in the rough paths context.
	\begin{them}
	\label{UATUniform}
Suppose that $f_0$ and $f$ in  \eqref{ODE} are globally Lipschitz and that $y_i$ is the $i$th component of the quantity of interest $y(\cdot; u)=c(x(\cdot; u))$, $u\in L^2([0,T];\mathbb{R}^{m})$, with a locally Lipschitz continuous $c$ and $x=x(\cdot; u)$ being the solution of \eqref{ODE}. Further, let $\mathcal{K} \subset L^2([0,T];\mathbb{R}^{m})$ be compact. Then, for every $\varepsilon>0$ there exists an $\ell_i \in \mathcal W_{m+1}$ such that
		\begin{align*}
	\sup_{t \in [0,T]}	\sup_{u \in \mathcal{K}} |y_i(t; u)-\langle \ell_i, S_{0,t}(\hat{U})  \rangle|< \varepsilon.
		\end{align*} 
	\end{them}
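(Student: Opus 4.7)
The plan, as suggested by the discussion preceding the theorem, is to reduce the uniform-in-$t$ estimate to the pointwise statement of Theorem \ref{UAT} by enlarging the domain of controls to include all stopped versions. For $u \in \mathcal K$ and $s \in [0, T]$, set $u^s(t) := u(t)\mathbf 1_{[0, s]}(t)$, and read equation \eqref{ODE} in its Stieltjes form $\mathrm dx = [f_0(x), f(x)] \, \mathrm d\hat U$. Two identities will drive the argument: first, $y_i(s; u) = y_i(T; u^s)$, in the Stieltjes sense that stopping the combined driver $\hat U$ at $s$ freezes the trajectory on $[s, T]$; second, $\langle \ell, S_{0, T}(\hat{U^s})\rangle$ can be rewritten as $\langle \ell_i, S_{0, s}(\hat U) \rangle$ for a suitable $\ell_i \in \mathcal W_{m+1}$, via Chen's splitting of the signature at $s$ together with the shuffle identity, which absorbs the polynomial-in-$s$ contributions coming from the pure time increments on $[s, T]$.

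With these identities in hand, the argument is structural. Set $\tilde{\mathcal K} := \{u^s : u \in \mathcal K, \, s \in [0, T]\} \subset L^2([0, T]; \mathbb R^m)$. I would verify that $\tilde{\mathcal K}$ is compact by exhibiting it as the continuous image of the compact product $\mathcal K \times [0, T]$ under the stopping map; a standard dominated convergence argument shows $(u, s) \mapsto u^s$ is continuous into $L^2$. The functional $g(v) := y_i(T; v)$ is then continuous on $\tilde{\mathcal K}$ by Theorem \ref{Lemma1} together with Remark \ref{rem1}, exploiting the global Lipschitz hypothesis on $f_0, f$ and the local Lipschitz continuity of $c$ on the uniformly bounded range of the trajectories.

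An application of Theorem \ref{UAT} to $g$ on $\tilde{\mathcal K}$ then yields, for each $\varepsilon > 0$, an $\ell \in \mathcal W_{m+1}$ with
\begin{align*}
\sup_{u \in \mathcal K, \, s \in [0, T]} \bigl| y_i(T; u^s) - \langle \ell, S_{0, T}(\hat{U^s}) \rangle \bigr| < \varepsilon.
\end{align*}
Substituting the two identities from the first paragraph and renaming the resulting functional as $\ell_i$ gives the claim.

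The main obstacle is making the first identity rigorous. Under the literal ODE reading $\dot x = f_0(x) + f(x) u^s(t)$, the drift $f_0$ continues to act on $[s, T]$, so $x(T; u^s) \neq x(s; u)$ unless $f_0 \equiv 0$; the identity genuinely requires the Stieltjes viewpoint in which the combined driver $\hat U(\cdot \wedge s)$ is stopped at $s$, freezing both the drift and the control channels simultaneously. A cleaner alternative, avoiding this bookkeeping between ``stopped control'' and ``stopped driver'', would be to re-execute the Stone--Weierstrass argument of Theorem \ref{UAT} directly on the compact family of fully stopped paths $\{\hat U(\cdot \wedge s) : u \in \mathcal K, \, s \in [0, T]\}$ in a suitable topology of bounded-variation paths; point-separation then still follows from Lemma \ref{UniquenesSignature}, since each stopped path retains a monotone (piecewise linear) time component starting at $0$, and the subalgebra property together with the existence of a non-zero constant go through verbatim.
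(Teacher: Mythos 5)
Your proposal is correct in substance and follows the same overall route as the paper: you correctly identify that stopping only the control $u$ leaves the drift $f_0$ acting on $[s,T]$, and that one must instead stop the \emph{combined} driver, which is exactly what the paper does by rewriting \eqref{ODE} as $\dot z=\mathfrak f(z)\mathfrak u$ with $\mathfrak f=(f_0,f)$, $\mathfrak u=\tilde u=(1,u^\top)^\top$ and stopping $\tilde u^s$, so that $z(T;\tilde u^s)=x(s;u)$. Within this shared skeleton you differ in two sub-steps. For compactness of the stopped family, you exhibit it as the continuous image of $\mathcal K\times[0,T]$ under $(u,s)\mapsto u^s$; this is valid (the joint continuity follows from $\|u_n^{s_n}-u^s\|\leq\|u_n-u\|+(\int_{s\wedge s_n}^{s\vee s_n}\|u\|^2)^{1/2}$ and absolute continuity of the integral) and is arguably cleaner than the paper's verification of the Kolmogorov--Riesz translation criterion from \cite{Hanche}, provided you apply it to the extended controls $\tilde u^s$ in $L^1([0,T];\mathbb R^{m+1})$. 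For the conversion of the resulting functional back to one of $S_{0,t}(\hat U)$, you invoke Chen's identity plus the shuffle product to absorb the pure-time contributions on $[t,T]$; this is the same content as the paper's Step 3, which instead runs an explicit recursion \eqref{recursion} on the signature ODE to peel off the $(T-t)^k/k!$ factors and re-express them, via the shuffle structure already established in the proof of Theorem \ref{UAT}, as functionals of $S^N_{0,t}(\hat U)$. Either packaging works; the paper's recursion is the fully written-out version of your sketch.

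One caveat on your ``cleaner alternative'' of re-running Stone--Weierstrass directly on the fully stopped paths $\hat U(\cdot\wedge s)$: since signatures are invariant under reparametrization, the constant tail contributes nothing and point-separation does go through, but you would still need a topology in which this family is compact \emph{and} the signature functionals and $g$ are continuous. Compactness fails for the BV-norm topology (BV balls are not norm-compact), so you are pushed back to the $L^1$ topology on the derivatives of the stopped drivers --- which is precisely the paper's set $H$ of extended controls in $L^1([0,T];\mathbb R^{m+1})$. The resulting functional then lives in $\mathcal W_{m+2}$ (an extra genuine time component sits on top of the stopped one), and the reduction to $\mathcal W_{m+1}$ is unavoidable; it is the nontrivial part of the argument and deserves the detail the paper gives it.
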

	\begin{proof}
Let us first introduce the following auxiliary system: \begin{align*}
\dot{z}(t)=\mathfrak f(z(t))\mathfrak u(t),\quad t\in[0, T], \quad z(0)=x_0,                                                       \end{align*}
where $\mathfrak f= (f_0 , f)$ and $\mathfrak u\in L^1([0, T]; \mathbb R^{m+1})$.  Clearly, we have $z(t; \mathfrak u)=x(t; u)$ for all $t\in[0, T]$ if $\mathfrak u=\tilde u=(1, u^\top)^\top$.
 Let us define the stopped version of  $\tilde{u}$ by $\tilde u^s(t)\coloneqq \tilde u(t)\boldsymbol{1}_{[0, s]}(t)$ for $s\in [0, T]$. We observe that \begin{align*}
{z}(T; \tilde u^s)&=x_0+\int_0^T \mathfrak f(z(q; \tilde u^s))\tilde u^s(q) dq =    x_0+\int_0^T f_0(z(q; \tilde u^s)) \boldsymbol{1}_{[0, s]}(q) dq   + \int_0^T f(z(q; \tilde u^s))u(q)\boldsymbol{1}_{[0, s]}(q) dq\\
&=x_0+\int_0^s f_0(x(q; u)) dq   + \int_0^s f(x(q; u))u(q)dq = x(s; u)
\end{align*}
exploiting that $\tilde u\equiv \tilde u^s$ on $[0, s]$. Since $f_0$ and $f$ are globally Lipschitz continuous, the same holds for $\mathfrak{f}$. Therefore, we can apply Theorem \ref{Lemma1} yielding that $\mathfrak u\mapsto z(\cdot; \mathfrak{u})$ is continuous on $L^1([0,T];\mathbb{R}^{m+1})$. Given the locally Lipschitz map $c(\cdot)$ and using Remark \ref{rem1}, we obtain the continuity of $\mathfrak  u\mapsto \bar y(\cdot; \mathfrak u):= c(z(\cdot; \mathfrak{u}))$ on $L^1([0,T];\mathbb{R}^{m+1})$. The goal is to apply Theorem  \ref{UAT} to the continuous functional $\mathfrak u\mapsto\bar y_i(T; \mathfrak u)$, where the index $i$ indicates the $i$th component of a vector. In this context, let us now introduce the following set of stopped controls:
		\begin{align*}
			H\coloneqq \{ \tilde{u}^t : \, t \in [0,T], \tilde u=(1, u^\top)^\top, u \in \mathcal{K} \}.
		\end{align*}


\textbf{Step 1:}
		 We prove that $H$ is relatively compact in $L^1([0,T]; \mathbb R^{m+1})$.
		From Theorem 1 in \cite{Hanche} we know that $\mathcal{F}\subset L^q([0,T];\mathbb{R}^m)$, $q\geq 1$,  is relatively compact if and only if
		\begin{align}\label{Hanche}
			\sup_{f\in \mathcal F} \int_0^T\|f(t+h)-f(t)\|^q {\mathrm d} t \rightarrow \,0
		\end{align}
as $h\to 0$. Notice that we set $f(t)=0$ for $t\not\in [0, T]$ in \eqref{Hanche}.  Since $\mathcal{K}$ is a compact set in $L^2([0,T];\mathbb{R}^m)$, we obtain from \eqref{Hanche} that
			\begin{align}\label{ass_lp_com}
	\sup_{u\in \mathcal K}\int_0^T \|u(s+h)-u(s)\|^q \, \mathrm{d}s \rightarrow \,0
		\end{align}
as  $h\to 0$ for $q=1, 2$. Suppose that $h>0$. Now, we consider \eqref{Hanche} for ${H}$ which yields
\begin{align*}
&\sup_{\mathfrak{u}\in  H}\int_0^T \|\mathfrak u(s+h)-\mathfrak u(s)\| \, \mathrm{d}s=
\sup_{t\in [0, T]}\sup_{u\in \mathcal K}\int_0^T \|\tilde{u}^t(s+h)-\tilde{u}^t(s)\| \, \mathrm{d}s\\
&\leq \sup_{t\in [0, T]}\int_0^T \vert\boldsymbol{1}_{[0, t]}(s+h)-\boldsymbol{1}_{[0, t]}(s)\vert \, \mathrm{d}s+\sup_{t\in [0, T]}\sup_{u\in \mathcal K}
\int_0^T \|u(s+h)\boldsymbol{1}_{[0, t]}(s+h)-u(s)\boldsymbol{1}_{[0, t]}(s)\| \, \mathrm{d}s.
\end{align*}
Firstly, we have that $\sup_{t\in [0, T]}\int_0^T \vert\boldsymbol{1}_{[0, t]}(s+h)-\boldsymbol{1}_{[0, t]}(s)\vert \, \mathrm{d}s= \sup_{t\in [0, T]}\int_{t-h}^t \, \mathrm{d}s=h\to 0$ for $h\to 0$. Secondly,  we obtain \begin{align*}
\int_0^T \|u(s+h)\boldsymbol{1}_{[0, t]}(s+h)-u(s)\boldsymbol{1}_{[0, t]}(s)\| \, \mathrm{d}s= \int_0^{t-h} \|u(s+h)-u(s)\|\, \mathrm{d}s+\int_{t-h}^t \|u(s)\| \, \mathrm{d}s.
\end{align*}
Condition \eqref{ass_lp_com} gives us \begin{align*}\sup_{t\in [0, T]}\sup_{u\in \mathcal K}\int_0^{t-h} \|u(s+h)-u(s)\| \, \mathrm{d}s\leq \sup_{u\in \mathcal K} \int_0^{T} \|u(s+h)-u(s)\| \, \mathrm{d}s\to 0
	\end{align*}
 as $h\to 0$. Furthermore, the Cauchy-Schwarz inequality leads to \begin{align*}
\sup_{t\in [0, T]}\sup_{u\in \mathcal K}\int_{t-h}^t \|u(s)\| \, \mathrm{d}s\leq \sqrt{h} \sup_{u\in \mathcal K} \sqrt{\int_{0}^T \|u(s)\|^2 \, \mathrm{d}s}\to 0
		\end{align*}
as $h\to 0$ exploiting that $\mathcal K$ is compact and hence bounded in $L^2([0, T]; \mathbb R^m)$. We omit the case of $h<0$, because it is analogue to considering $h>0$. Consequently, $H$ is relatively compact such that the closure  $\cl(H)$ is compact in $L^1([0, T]; \mathbb R^{m+1})$.
		
\textbf{Step 2:}
We apply Theorem \ref{UAT} to $\mathfrak u\mapsto\bar y_i(T; \mathfrak u)$ on the compact set $\cl(H)$. This theorem guarantees the existence of a $\bar\ell_i \in \mathcal{W}_{m+2}$, such that
\begin{align*}
	&\sup_{t \in [0,T]}	\sup_{u \in \mathcal{K}} \Big\vert y_i(t; u)-\Big\langle \bar\ell_i, S_{0,T}\Big({\widehat{\int_{0}^{\cdot}\tilde{u}^t(s) \, \mathrm{d}s}}\Big)  \Big\rangle\Big\vert = \sup_{t \in [0,T]}	\sup_{u \in \mathcal{K}}\Big\vert\bar y_i(T; \tilde u^t)-\Big\langle \bar \ell_i, S_{0,T}\Big({\widehat{\int_{0}^{\cdot}\tilde{u}^t(s) \, \mathrm{d}s}}\Big)  \Big\rangle\Big\vert\\
	&= \sup_{\mathfrak u \in H}|\bar y_i(T; \mathfrak u)-\langle \bar \ell_i, S_{0,T}(\hat{\mathfrak U})  \rangle|\leq  \sup_{\mathfrak u \in \cl(H)}|\bar y_i(T; \mathfrak u)-\langle \bar \ell_i, S_{0,T}(\hat{\mathfrak U})  \rangle|<\varepsilon
\end{align*}
for an arbitrary $\varepsilon>0$, where $\hat{\mathfrak U}(t)=(t, \int_0^t\mathfrak u(s)^\top \mathrm{d}s)^\top$, $t\in [0, T]$, for $\mathfrak u\in L^1([0, T]; \mathbb R^{m+1})$.

\textbf{Step 3:}
		We continue to show that for each  $\bar{\ell} \in \mathcal W_{m+2}$ there exists an $\ell \in \mathcal W_{m+1}$, such that for all $t \in [0,T]$ and $u \in \mathcal{K}$
		\begin{align*}
		\Big\langle\bar{\ell},S_{0,T} \Big({\widehat{\int_{0}^{\cdot}\tilde{u}^t(s) \, \mathrm{d}s}}\Big) \Big\rangle=\big\langle\ell, S_{0,t}\big(\hat{U}\big)\big\rangle
		\end{align*} 
holds, where each  ``hat'' indicates that a time component is added to the path. Since we can always find an $N\in\mathbb N$ and a vector $\bar l\in \mathbb R^{{\bar n}_N}$ with $\langle\bar{\ell},S_{0,T} ({\widehat{\int_{0}^{\cdot}\tilde{u}^t(s) \, \mathrm{d}s}}) \rangle=\langle\bar{l},S_{0,T}^N ({\widehat{\int_{0}^{\cdot}\tilde{u}^t(s) \, \mathrm{d}s}}) \rangle$, it is enough to show that \begin{align*}
		\Big\langle\bar{l},S_{0,T}^N \Big({\widehat{\int_{0}^{\cdot}\tilde{u}^t(s) \, \mathrm{d}s}}\Big) \Big\rangle=\big\langle l, S_{0,t}^N\big(\hat{U}\big)\big\rangle
		\end{align*}
for some $l\in\mathbb R^{n_N}$.
Notice that the dimensions $n_N$ and $\bar n_N$ depend on the degree $N$ of the truncated signature and will not be specified further. Let $\mathfrak{u}\in L^2([0, T]; \mathbb R^{m+2})$ 
and $x=x(t;\mathfrak{u})$ be the solution to
\begin{align*}
	\dot{x}(t)= \sum_{i=-1}^{m} A_i x(t) \mathfrak{u}_i, \quad x(0)=\mathbf{e}_1\in \mathbb R^{{\bar n}_N},
\end{align*}
where the matrices $A_i$ are like in Proposition \ref{SigODE}. Then, we have $x(v)=x(v;(1, 1^t, (u^t)^\top)^{\top})=S^N_{0,v}(\widehat{\int_{0}^{\cdot} \tilde{u}^t(s) \, \mathrm{d}s)}$ and $z(v)=x(v;(1, 1, u^\top)^{\top})=S^N_{0,v}\Big(\widehat{\widehat{U}}\Big)=S^N_{0,v}(\widehat{\int_{0}^{\cdot} \tilde{u}(s) \, \mathrm{d}s)}$.
Suppose that $v>t$. Then, exploiting that $x$ and $z$ coincide on $[0, t]$, we have
\begin{align*}
	&S^N_{0,v}\Big(\widehat{\int_{0}^{\cdot} \tilde{u}^t(s) \, \mathrm{d}s\Big)}=x(v)\\
	&=\mathbf{e}_1+ \int_{0}^{v} A_{-1} x(s)  \, \mathrm{d}s + \int_{0}^{v} A_{0} x(s) \boldsymbol{1}_{[0,t]}(s) \, \mathrm{d}s + \sum_{i=1}^m \int_{0}^{v} A_i x(s) u(s) \boldsymbol{1}_{[0,t]}(s) \, \mathrm{d}s\\
	&= \mathbf{e}_1+ \int_{0}^{t} A_{-1} z(s)  \, \mathrm{d}s + \int_{0}^{t} A_{0} z(s) \, \mathrm{d}s + \sum_{i=1}^m \int_{0}^{t} A_i z(s) u(s) \, \mathrm{d}s +\int_{t}^{v} A_{-1} x(s)  \, \mathrm{d}s\\
	&=S^N_{0,t}\Big(\widehat{\widehat{U}}\Big)+\int_{t}^{v} A_{-1} S^N_{0,s}\Big(\widehat{\int_{0}^{\cdot} \tilde{u}^t(s) \, \mathrm{d}s\Big)}  \, \mathrm{d}s.
\end{align*}
		Now, we observe that
		\begin{align*}
			A_{-1}S^N_{0,s}\Big(\widehat{\int_{0}^{\cdot} \tilde{u}^t(s) \, \mathrm{d}s\Big)}=
			\tilde A_{-1} S^{N-1}_{0,s}\Big(\widehat{\int_{0}^{\cdot} \tilde{u}^t(s) \, \mathrm{d}s\Big)},
		\end{align*}
where $\tilde{A}_{-1}=\begin{bmatrix}
			\mathcal{O}_{1,\tilde{n}}\\
			\blkdiag(e_1, \ldots ,e_1) 
		\end{bmatrix}$, $\tilde n={\bar n}_{N-1}$.
Given $\bar{l}^N\in\mathbb R^{{\bar n}_N}$, we therefore obtain that
		\begin{align}\label{recursion}
			\begin{split}
		\Big\langle \bar{l}^N,  S^N_{0,v}\Big(\widehat{\int_{0}^{\cdot} \tilde{u}^t(s) \, \mathrm{d}s}\Big)\Big\rangle = \Big\langle \bar{l}^N,  S^N_{0,t}\Big(\widehat{\widehat{U}}\Big)\Big\rangle+ \int_{t}^{v} \Big\langle \tilde{A}_{-1}^\top \bar{l}^N, S_{0,s}^{N-1}\Big(\widehat{\int_{0}^{\cdot} \tilde{u}^t(s) \, \mathrm{d}s}\Big) \Big\rangle \, \mathrm{d}s \\
		= \big\langle {l}^N,  S^N_{0,t}\big({\hat{U}}\big)\big\rangle+ \int_{t}^{v} \Big\langle \bar{l}^{N-1}, S_{0,s}^{N-1}\Big(\widehat{\int_{0}^{\cdot} \tilde{u}^t(s) \, \mathrm{d}s}\Big) \Big\rangle \, \mathrm{d}s,
		\end{split}
		\end{align}
where $\bar{l}^{N-1}\coloneqq \tilde{A}_{-1}^\top\bar{l}^{N}\in\mathbb R^{{\bar n}_{N-1}}$ and ${l}^N\in\mathbb R^{n_N}$. With $\bar{l}\in\mathbb R^{\bar n_N}$, we apply \eqref{recursion} to the following functional twice and get
		\begin{align*}
			\Big\langle \bar{l},  S^N_{0,T}\Big(\widehat{\int_{0}^{\cdot} \tilde{u}^t(s) \, \mathrm{d}s\Big)}\Big\rangle&=\big\langle {l}^N,  S^N_{0,t}\big({\hat{U}}\big)\big\rangle + \int_{t}^{T} \Big\langle \bar {l}^{N-1},  S^{N-1}_{0,s_{N-1}}\Big(\widehat{\int_{0}^{\cdot} \tilde{u}^t(s) \, \mathrm{d}s}\Big)\Big\rangle \mathrm{d}s_{N-1}\\
			&=\big\langle {l}^N,  S^N_{0,t}\big({\hat{U}}\big)\big\rangle + \int_{t}^{T} \big\langle {\tilde{l}}^{N-1},  S^{N-1}_{0,t}\big({\hat{U}}\big)\big\rangle \mathrm{d}s_{N-1} \\ &\quad+ \int_{t}^T \int_t^{s_{N-1}} \Big\langle \bar l^{N-2}, S^{N-2}_{0,s_{N-2}}\Big(\widehat{\int_{0}^{\cdot} \tilde{u}^t(s) \, \mathrm{d}s}\Big) \Big\rangle \, \mathrm{d} s_{N-2}  \, \mathrm{d} s_{N-1}\\
				&=\big\langle {{l}}^N,  S^N_{0,t}\big({\hat{U}}\big)\big\rangle + (T-t) \big\langle \tilde {l}^{N-1},  S^{N-1}_{0,t}\big({\hat{U}}\big)\big\rangle \\ &\quad+ \int_{t}^{T} \int_t^{s_{N-1}} \Big\langle \bar{l}^{N-2} , S^{N-2}_{0,s_{N-2}}\Big(\widehat{\int_{0}^{\cdot} \tilde{u}^t(s) \, \mathrm{d}s}\Big) \Big\rangle \, \mathrm{d} s_{N-2}  \, \mathrm{d} s_{N-1}\\
				&=\big\langle {{l}}^N,  S^N_{0,t}\big({\hat{U}}\big)\big\rangle+\big\langle {{l}}^{N-1},  S^N_{0,t}\big({\hat{U}}\big)\big\rangle\\
				&\quad+\int_{t}^{T} \int_t^{s_{N-1}} \Big\langle \bar{l}^{N-2}, S^{N-2}_{0,s_{N-2}}\Big(\widehat{\int_{0}^{\cdot} \tilde{u}^t(s) \, \mathrm{d}s}\Big)\Big\rangle \, \mathrm{d} s_{N-2} \, \mathrm{d} s_{N-1},
			\end{align*}
where ${l}^{N-1}, {l}^{N}\in\mathbb R^{n_N}$, $\tilde {l}^{N-1}\in\mathbb R^{n_{N-1}}$ and $\bar{l}^{N-2}\in\mathbb R^{{\bar n}_{N-2}}$.
A successive application of \eqref{recursion} yields 
\begin{align*}
	\Big\langle \bar{l}, S^N_{0,T}\Big(\widehat{\int_{0}^{\cdot} \tilde{u}^t(s) \, \mathrm{d}s}\Big)\Big\rangle&=\sum_{i=0}^{N-1} \big\langle l^{N-i}, S_{0,t}^N\big(\hat{U}\big) \big\rangle\\
	&\quad+ \int_{t}^{T}\int_{t}^{s_{N-1}} \cdots \int_{t}^{s_1} \Big\langle \bar{l}^0, S_{0,s_0}^0\Big(\widehat{\int_{0}^{\cdot} \tilde{u}^t(s) \, \mathrm{d}s}\Big) \Big\rangle\, \mathrm{d}s_0 \cdots \mathrm{d}s_{N-1} \\
	&=\sum_{i=0}^{N-1} \big\langle l^{N-i}, S_{0,t}^N\big(\hat{U}\big) \big\rangle +  \bar{l}^0 \frac{(T-t)^N}{N!}=\sum_{i=0}^{N} \big\langle l^{N-i}, S_{0,t}^N\big(\hat{U}\big) \big\rangle \\
	&= \langle {l}, S_{0,t}^N(\hat{U}) \rangle,
\end{align*}
where $S^0\equiv 1$, $\bar l^0\in\mathbb R$, $l^i\in\mathbb R^{n_N}$ ($i\in\{0, \dots, N\}$) and $l \coloneqq\sum_{i=0}^N l^{N-i}\in\mathbb R^{n_N}$. This concludes the proof.
	\end{proof}
The next corollary shows that we do not need to assume global Lipschitz continuity of the vector fields, but only a one-sided linear growth condition. This is vital if polynomial nonlinearities like in Example  \ref{ex1} shall be covered.
\begin{kor}\label{cor1}
If we consider locally Lipschitz continuous vector fields $f_0$ and $f$ together with the one-sided linear growth condition\begin{align*}
     2\langle x, f_0(x)\rangle + \big\|f(x)\big\|^2_F\leq \mathcal C(1+\|x\|^2)                                                                                                                           \end{align*}
for some constant $\mathcal C$ and all $x\in\mathbb R^d$ instead of the global Lipschitz assumption on $f_0$ and $f$ in Theorem  \ref{UATUniform}, the statement of that theorem remains true, i.e., for every $\varepsilon>0$ there exists an $\ell_i \in \mathcal W_{m+1}$ such that
		\begin{align*}
	\sup_{t \in [0,T]}	\sup_{u \in \mathcal{K}} |y_i(t; u)-\langle \ell_i, S_{0,t}(\hat{U})  \rangle|< \varepsilon.
		\end{align*}
\end{kor}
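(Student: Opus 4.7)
The plan is to reduce the corollary to the already-proved Theorem \ref{UATUniform} by a cutoff argument. Under the one-sided linear growth condition and compactness of $\mathcal{K}$ in $L^2$, the solutions $x(\cdot;u)$ remain in a common bounded ball, so one may replace $f_0$ and $f$ outside that ball with a globally Lipschitz surrogate without changing the outputs for $u \in \mathcal{K}$.

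First, I would establish a uniform a priori bound. Since $\mathcal{K}$ is compact, hence bounded, in $L^2([0,T];\mathbb{R}^m)$, there is $R_0 > 0$ with $\|u\|_{L^2([0,T])} \leq R_0$ for all $u \in \mathcal{K}$. The Gr\"onwall-type estimate underlying Lemma \ref{Hilfslemma}(b) and used in the proof of Theorem \ref{Lemma2} (cf.\ the bounds referenced as \eqref{x_in_ball2} in Remark \ref{rem1}) yields a radius $R > 0$, depending only on $R_0$, $T$, $x_0$ and $\mathcal{C}$, such that $\|x(t;u)\| \leq R$ for every $t \in [0,T]$ and every $u \in \mathcal{K}$.

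Next, I would construct a globally Lipschitz surrogate. Choose a smooth cutoff $\chi \colon \mathbb{R}^d \to [0,1]$ with $\chi \equiv 1$ on the closed ball $\overline{B}_R$ and $\chi \equiv 0$ outside $B_{R+1}$, and define
\[
\tilde f_0(x) := \chi(x) f_0(x), \qquad \tilde f(x) := \chi(x) f(x).
\]
Since $f_0$ and $f$ are locally Lipschitz and $\chi$ is smooth and compactly supported, $\tilde f_0$ and $\tilde f$ are Lipschitz continuous on all of $\mathbb{R}^d$. They agree with $f_0$ and $f$ on $\overline{B}_R$, so by uniqueness of solutions the modified system
\[
\dot{\tilde x}(t) = \tilde f_0(\tilde x(t)) + \tilde f(\tilde x(t)) u(t), \qquad \tilde x(0) = x_0,
\]
satisfies $\tilde x(\cdot;u) = x(\cdot;u)$ for every $u \in \mathcal{K}$, and consequently $\tilde y_i(t;u) := c(\tilde x(t;u)) = y_i(t;u)$ pointwise on $[0,T]$.

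Finally, I would invoke Theorem \ref{UATUniform} for the modified system, whose vector fields are globally Lipschitz and whose output map $c$ is locally Lipschitz as required. This furnishes, for every $\varepsilon > 0$, an $\ell_i \in \mathcal{W}_{m+1}$ with
\[
\sup_{t \in [0,T]} \sup_{u \in \mathcal{K}} |\tilde y_i(t;u) - \langle \ell_i, S_{0,t}(\hat U)\rangle| < \varepsilon,
\]
and by the identity $\tilde y_i = y_i$ on $\mathcal{K}$ the claim follows. The main obstacle is the uniform a priori bound in the first step; once the solutions associated with $\mathcal{K}$ are confined to a common compact set, the locally Lipschitz structure is indistinguishable from the globally Lipschitz one and the rest is a direct reduction.
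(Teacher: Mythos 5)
Your proposal is correct and follows essentially the same route as the paper: a uniform a priori bound confining $x(\cdot;u)$ to a ball for $u\in\mathcal K$ (via Lemma \ref{Hilfslemma}(b) and \eqref{x_in_ball2}), a globally Lipschitz modification of $f_0$ and $f$ agreeing with them on that ball, and an application of Theorem \ref{UATUniform} to the modified system. The only cosmetic difference is that you build the surrogate by an explicit smooth cutoff, whereas the paper invokes an abstract Lipschitz extension theorem; both are valid.
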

\begin{proof}
Since $\mathcal K$ is compact, it is also bounded. This means that we find an $R>0$, such that $\|u\|_{L^2([0, T])}\leq R$ for all $u\in \mathcal K$. We can now use the proof of Lemma \ref{Hilfslemma} case (b), where it is shown that $x(\cdot, u)$ takes values in a ball $B$ with finite radius for $u\in \mathcal K$, see \eqref{x_in_ball2}.
As $f_0$ and $f$ are locally Lipschitz, they are Lipschitz continuous on $B$. Therefore, one can find globally Lipschitz continuous extensions $\bar f_0$ and $\bar f$ on $\mathbb R^d$, so that $\bar f_0\big\vert_B=f_0$ and $\bar f\big\vert_B=f$, see for instance \cite{lip_extend}. We introduce the following modified version of \eqref{ODE}
 \begin{align}\label{mod_ode}
		\dot{\bar x}(t)=\bar f_0(\bar x(t))+\bar f(\bar x(t))u(t),\quad \bar x(0)=x_0,
	\end{align}
having globally Lipschitz vector fields. Now, Theorem \ref{UATUniform} can be applied to \eqref{mod_ode}. The result follows, because $\bar x(t; u)= x(t; u)$ for $u\in \mathcal K$.
\end{proof}
For completeness, let us formulate a corollary, where the whole output $y$ is approximated by a linear map of the truncated signature of $\hat U$.
\begin{kor}\label{cor_approx_y}
Suppose that the assumptions of Corollary \ref{cor1} hold. Moreover, let $y$ be the quantity of interest introduced in \eqref{eq_y}. Then, for every $\varepsilon>0$ there exists an $N\in\mathbb N$ and a matrix $C\in\mathbb R^{p\times n}$ with $n=n(N, m+1)$, such that
		\begin{align*}
	\sup_{t \in [0,T]}	\sup_{u \in \mathcal{K}} \|y(t; u)-C S^N_{0,t}(\hat{U}) \|< \varepsilon.
		\end{align*}
\end{kor}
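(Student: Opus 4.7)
The plan is to reduce the vector-valued statement to $p$ independent applications of the component-wise result in Corollary \ref{cor1} and then bundle the resulting functionals into a single matrix acting on a common truncated signature.

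First I would fix an arbitrary $\varepsilon>0$ and apply Corollary \ref{cor1} to each component $y_i$, $i=1,\ldots,p$, with tolerance $\varepsilon/\sqrt{p}$. This produces words $\ell_i\in\mathcal{W}_{m+1}$ such that
\begin{align*}
\sup_{t\in[0,T]}\sup_{u\in\mathcal{K}}\bigl|y_i(t;u)-\langle \ell_i,S_{0,t}(\hat U)\rangle\bigr|<\varepsilon/\sqrt{p}.
\end{align*}
By definition of $\mathcal{W}_{m+1}$, each $\ell_i$ has only finitely many nonzero entries, so there exists $N_i\in\mathbb N$ and $l_i\in\mathbb R^{n_i}$ with $n_i=n(N_i,m+1)$ such that $\langle\ell_i,S_{0,t}(\hat U)\rangle=\langle l_i,S^{N_i}_{0,t}(\hat U)\rangle$.

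Next I would homogenize the truncation order by setting $N\coloneqq\max_{i=1,\ldots,p}N_i$ and padding each $l_i$ with zeros to obtain vectors $\tilde l_i\in\mathbb R^n$ with $n=n(N,m+1)$ satisfying $\langle l_i,S^{N_i}_{0,t}(\hat U)\rangle=\langle \tilde l_i,S^{N}_{0,t}(\hat U)\rangle$; this is consistent because the truncated signature of higher order simply prepends additional iterated integrals that receive zero weight. Stacking the row vectors $\tilde l_i^\top$ yields a matrix $C\in\mathbb R^{p\times n}$ whose $i$th entry of $CS^{N}_{0,t}(\hat U)$ coincides with $\langle\tilde l_i,S^{N}_{0,t}(\hat U)\rangle$.

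Finally I would estimate the Euclidean norm componentwise:
\begin{align*}
\bigl\|y(t;u)-CS^{N}_{0,t}(\hat U)\bigr\|^2=\sum_{i=1}^p\bigl|y_i(t;u)-\langle\tilde l_i,S^{N}_{0,t}(\hat U)\rangle\bigr|^2<p\cdot\frac{\varepsilon^2}{p}=\varepsilon^2,
\end{align*}
uniformly in $t\in[0,T]$ and $u\in\mathcal{K}$, which gives the claim. I do not expect any genuine obstacle here since all the heavy lifting (local Lipschitz regularity of $u\mapsto y(\cdot;u)$, compactness transfer via stopping, and the universal approximation argument based on Stone--Weierstrass) has already been performed in Theorem \ref{UATUniform} and Corollary \ref{cor1}; the only mildly delicate point is the bookkeeping of truncation orders, which is handled by taking $N=\max_iN_i$ and exploiting that $S^N_{0,t}(\hat U)$ contains $S^{N_i}_{0,t}(\hat U)$ as a sub-block.
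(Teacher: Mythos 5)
Your proposal is correct and follows essentially the same route as the paper: apply Corollary \ref{cor1} componentwise, pass to a common truncation order $N$, stack the resulting weight vectors into the matrix $C$, and conclude by a norm estimate (the paper uses the cruder bound $\|v\|\leq\sum_i|v_i|$ with tolerance $\varepsilon/p$, while you use the Euclidean sum of squares with $\varepsilon/\sqrt{p}$ — an immaterial difference). The only nitpick is that the higher-order iterated integrals are appended, not prepended, to the truncated signature vector, but your zero-padding argument is unaffected.
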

		\begin{proof}
 Directly applying Corollary \ref{cor1}, we find an $\ell_i\in\mathcal W_{m+1}$, such that \begin{align*}
	\sup_{t \in [0,T]}	\sup_{u \in \mathcal{K}} |y_i(t; u)-\langle \ell_i, S_{0,t}(\hat{U})  \rangle|< \frac{\varepsilon}{p}
\end{align*}
for a given $\varepsilon>0$. As $\ell_i$ has only finitely many entries different from zero for all $i=1, \dots, p$, we find an $N\in\mathbb N$ and vectors $l_i\in \mathbb R^n$, so that $\langle \ell_i, S_{0,t}(\hat{U})  \rangle=\langle l_i, S_{0,t}^N(\hat{U})\rangle$ for all $i=1, \dots, p$. Defining $C$ as a matrix with $i$th row equal to $l_i^\top$, we find that \begin{align*}
	\sup_{t \in [0,T]}	\sup_{u \in \mathcal{K}} \|y(t; u)-C S^N_{0,t}(\hat{U}) \|\leq\sum_{i=1}^p
	\sup_{t \in [0,T]}	\sup_{u \in \mathcal{K}} |y_i(t; u)-\langle l_i, S_{0,t}^N(\hat{U})\rangle|
	< \varepsilon.
\end{align*}
This concludes the proof.
\end{proof}
An immediate consequence of Corollary \ref{cor_approx_y} and Proposition \ref{SigODE} is that we obtain a universal bilinear system \eqref{tr_sig_ode} satisfied by $S^N(\hat{U})$. The matrices $A_0, A_1, \dots, A_m$ are known according to Proposition \ref{SigODE}. Only the matrix $C$ needs to be computed. This does not require any knowledge on \eqref{original_system}, but only output data, an advantage for practical considerations. We will discuss this aspect further in Section \ref{sec_learn_C}. The resulting bilinearization that we found has several additional advantages in comparison to existing methods like the Carleman (bi)linearization \cite{Rugh1981, Sastry1999}. The dimension of the signature system \eqref{tr_sig_ode} grows only in the number of inputs $m$, see Proposition \ref{SigODE}. Thus, our method is applicable when the state variable is high-dimensional (but the number of inputs is low). However, notice that the dimension $n=\frac{(m+1)^{N+1}-1}{m}$ of the truncated signature $S^N(\hat{U})$ grows exponentially in $m$. Therefore, \eqref{tr_sig_ode} is expected to be high-dimensional. For that reason, we consider MOR for such bilinear systems in Section \ref{sec_mor}. In addition, let us point out that we do not need any smoothness of the vector fields in our approximation, see the assumptions of Corollary \ref{cor1}, making the signature approach widely applicable.


	\section{MOR for unstable bilinear systems with non-zero initial states}\label{sec_mor}

We briefly discuss a strategy on how to reduce the dimension of system \eqref{tr_sig_ode}. In particular, there will be no assumptions on the matrices $A_0, A_1, \dots, A_m$ such as stability. This is natural as the truncated signature $S^N(\hat U)$ satisfies such an equation with coefficients having only zero eigenvalues, see Proposition \ref{SigODE} and Remark \ref{remark_sig_bil}. In addition, we assume the initial state to be different from zero as this is true for the truncated signature process as well. We even consider a subspace of initial states in the following meaning that we have $s_0= S_0 v$, where the columns of the matrix $S_0$ span this space of initial values, whereas the generic vector $v$ can be viewed as another input to the system. If a single initial value $s_0$ in \eqref{tr_sig_ode} is considered, we simply set $S_0=s_0$ and $v=1$. It is important to notice that there are first results on MOR for bilinear systems with non-zero initial states \cite{Cao2021, Redmann2023}. However, the Gramians considered in these works require a certain stability of the system. This is not guaranteed here. For that reason, we extend the approach of \cite{Redmann2023} to unstable systems using Gramians on finite intervals $[0, T]$.

\subsection{Identifying unimportant directions}\label{sec_gram}
We need the fundamental solution of the truncated signature equation \eqref{tr_sig_ode} in the analysis of the Gramians proposed below. These Gramians are crucial for detecting dominant subspaces of \eqref{tr_sig_ode} and are hence the basis for computing the reduced system \eqref{rom_intro}.
	\begin{defi}\label{defn_fund}
Suppose that $t_0\leq t\leq T$. The fundamental solution of \eqref{tr_sig_ode} is the function $\Phi$ that satisfies
 \begin{align*}
 \Phi(t,t_0) = I +\int_{t_0}^t A_0 \Phi(v,t_0) \mathrm{d}v + \sum_{i=1}^m \int_{t_0}^t A_i \Phi(v,t_0) u_i(v) \mathrm{d}v,
\end{align*}
where $I$ is the identity matrix. If $t_0=0$, we set $\Phi(t):=\Phi(t, 0)$, $t\in [0, T]$.
\end{defi}
By definition of $\Phi$, we immediately obtain a solution representation $S(t)= \Phi(t, t_0) S(t_0)$, $t\in [t_0, T]$, for the solution of  \eqref{tr_sig_ode} which we exploit below. In the following, we find an upper bound for a quadratic form of $\Phi$ that is used to define Gramians.
\begin{lem}\label{fund_est}
Let $\Phi$ be the fundamental solution according to Definition \ref{defn_fund} and $M$ a positive semidefinite matrix. Then, it holds that
 \begin{align*}
\Phi(t, t_0) M \Phi(t, t_0)^\top \leq  \exp\left\{\int_{t_0}^t \left\|u(v)\right\|^2 \mathrm{d}v\right\}  Z(t-t_0),
\end{align*}
where $ Z(t)$, $t\in [0, T]$, satisfies the matrix differential equation
\begin{align}\label{eqZ}
 \dot {Z}(t) = A_0  Z(t) +  Z(t) A_0^\top +\sum_{i=1}^m A_i Z(t) A_i^\top ,\quad Z(0) = M.
\end{align}
\end{lem}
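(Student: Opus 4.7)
The plan is to derive a matrix differential inequality for $P(t) \coloneqq \Phi(t,t_0) M \Phi(t,t_0)^\top$ and then compare it to a rescaled version of $Z$. By Definition \ref{defn_fund}, $\Phi(\cdot,t_0)$ satisfies a bilinear matrix ODE, so differentiating and using symmetry of $M$ yields
\begin{align*}
\dot P(t) = A_0 P(t) + P(t) A_0^\top + \sum_{i=1}^m u_i(t)\bigl(A_i P(t) + P(t) A_i^\top\bigr), \quad P(t_0) = M \geq 0.
\end{align*}
The $u$-dependent cross terms must be absorbed into a symmetric, cone-preserving expression; this is the reason the claimed bound carries an exponential prefactor in $\int_{t_0}^t \|u(v)\|^2\,\mathrm{d}v$.

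The key estimate is the pointwise positive semidefinite inequality $0 \leq (A_i - u_i(t)I)\,P(t)\,(A_i - u_i(t)I)^\top$, which rearranges to $u_i(t)(A_i P + P A_i^\top) \leq A_i P A_i^\top + u_i(t)^2 P$. Summing over $i = 1, \ldots, m$ and inserting into the expression for $\dot P$ gives
\begin{align*}
\dot P(t) \leq A_0 P(t) + P(t) A_0^\top + \sum_{i=1}^m A_i P(t) A_i^\top + \|u(t)\|^2 P(t).
\end{align*}
Next, define $\tilde Z(t) \coloneqq \exp\bigl(\int_{t_0}^t \|u(v)\|^2\,\mathrm{d}v\bigr)\, Z(t-t_0)$. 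A direct product-rule computation combined with \eqref{eqZ} shows that $\tilde Z$ satisfies the matching equality
\begin{align*}
\dot{\tilde Z}(t) = A_0 \tilde Z(t) + \tilde Z(t) A_0^\top + \sum_{i=1}^m A_i \tilde Z(t) A_i^\top + \|u(t)\|^2 \tilde Z(t), \quad \tilde Z(t_0) = M.
\end{align*}

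Setting $W \coloneqq \tilde Z - P$ and subtracting, I obtain $W(t_0) = 0$ and $\dot W = \mathcal{L}(t)W + R(t)$, where $\mathcal{L}(t) X \coloneqq A_0 X + X A_0^\top + \sum_i A_i X A_i^\top + \|u(t)\|^2 X$ and the residual $R(t) \coloneqq \sum_i (A_i - u_i(t)I)\, P(t)\, (A_i - u_i(t)I)^\top \geq 0$ by the very inequality used above. The conclusion $W(t) \geq 0$ then gives $P(t) \leq \tilde Z(t)$, which is the statement of the lemma.

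The main obstacle is the final comparison step, namely showing that the evolution generated by the time-dependent bilinear Lyapunov operator $\mathcal{L}(t)$ preserves the cone of symmetric positive semidefinite matrices, so that the variation-of-constants formula $W(t) = \int_{t_0}^t \mathcal{U}(t,s) R(s)\,\mathrm{d}s$ lies in that cone. This positivity is classical in the MOR literature for bilinear Gramians, but must be justified here without any stability assumption on $A_0, \ldots, A_m$. I would argue it elementarily by a short Euler/splitting approximation: over a small step of length $h$, the propagator of $\mathcal{L}(t)$ is, up to $O(h^2)$, the composition of the cone-preserving maps $X \mapsto (I + h A_0)\, X\, (I + h A_0)^\top$, $X \mapsto X + h A_i X A_i^\top$, and the scalar multiplication $X \mapsto (1 + h\|u(t)\|^2)\,X$, each of which sends PSD to PSD; taking the limit yields that $\mathcal{U}(t,s)$ maps the PSD cone into itself, as required.
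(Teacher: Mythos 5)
Your argument is correct and follows essentially the same route as the proof the paper points to (it defers to the cited works of Redmann, where the same key inequality $u_i(t)\bigl(A_iX+XA_i^\top\bigr)\leq A_iXA_i^\top+u_i(t)^2X$ for $X\geq 0$ and a positivity-preserving comparison argument are used). For your final step, note that $\mathcal L(t)$ is the time-independent operator $\mathcal L_0X=A_0X+XA_0^\top+\sum_{i=1}^mA_iXA_i^\top$ plus the scalar perturbation $\|u(t)\|^2\,\mathrm{id}$, and these commute, so $\mathcal U(t,s)=\exp\bigl\{\int_s^t\|u(v)\|^2\,\mathrm dv\bigr\}\,\expn^{(t-s)\mathcal L_0}$ and the cone-preservation reduces to the classical time-independent Lie--Trotter splitting for a ``Lyapunov plus completely positive'' generator, exactly as you sketch; no stability of $A_0,\dots,A_m$ is needed for that.
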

\begin{proof}
A proof is stated in \cite{Redmann2021} and in \cite{Redmann2023}.
\end{proof}
Below, let us denote the solution of \eqref{eqZ} with initial state $M$ by $Z(t, M)$, $t\in[0, T]$. Consequently, we can define a reachability Gramian by
\begin{align*}
  P:=\int_0^T Z(t, S_0 S_0^\top) \mathrm{d}t.                                                                                                                                                                                                                                                                                           \end{align*}
The dependence of $P$ on $T$ is omitted for a simpler notation. The following proposition creates a link between the eigenspaces of the reachability Gramian $P$ and state directions that can be neglected in \eqref{tr_sig_ode}.
\begin{prop}\label{unimportant_state0}
Let $S(t)$, $t\in [0, T]$, be the solution of \eqref{tr_sig_ode} and $(p_i)$ be an orthonormal basis of $\mathbb R^n$ consisting of eigenvectors of the reachability Gramian $P$. Then, we have
 \begin{align*}
 \int_0^T \vert\langle S(t), p_{i}\rangle \vert^2 \mathrm{d}t\leq \lambda_{i}\exp\left\{\left\| u\right\|_{L^2([0, T])}^2\right\} \left\|v\right\|^2,
                    \end{align*}
where $\lambda_{i}$ is the eigenvalue corresponding to $p_{i}$.
\end{prop}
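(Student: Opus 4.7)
The plan is to combine the solution representation via the fundamental solution $\Phi$ with the quadratic bound from Lemma \ref{fund_est} and the spectral decomposition of $P$. The key observation is that the initial state has the factored form $s_0 = S_0 v$, which is what produces the tight upper bound involving $\|v\|^2$ rather than $\|s_0\|^2$.

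First, I would use Definition \ref{defn_fund} to write $S(t) = \Phi(t, 0) s_0 = \Phi(t) S_0 v$ for $t \in [0, T]$. Then, by the Cauchy-Schwarz inequality applied to the inner product between $v$ and $S_0^\top \Phi(t)^\top p_i$, we obtain
\begin{align*}
|\langle S(t), p_i \rangle|^2 = |\langle v, S_0^\top \Phi(t)^\top p_i \rangle|^2 \leq \|v\|^2 \, \langle p_i, \Phi(t) S_0 S_0^\top \Phi(t)^\top p_i \rangle.
\end{align*}
This is the step where the structure $s_0 = S_0 v$ is exploited to separate the contribution of the initial condition subspace from the generic coefficient vector $v$.

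Next, I would apply Lemma \ref{fund_est} with the positive semidefinite matrix $M = S_0 S_0^\top$ and $t_0 = 0$, which yields the pointwise (in $t$) matrix inequality
\begin{align*}
\Phi(t) S_0 S_0^\top \Phi(t)^\top \leq \exp\bigl\{\|u\|_{L^2([0,T])}^2\bigr\} \, Z(t, S_0 S_0^\top),
\end{align*}
where I have used $\int_0^t \|u(v)\|^2 dv \leq \|u\|_{L^2([0,T])}^2$ to pull the exponential out uniformly in $t$. Sandwiching with $p_i$ and integrating over $[0, T]$ gives
\begin{align*}
\int_0^T |\langle S(t), p_i \rangle|^2 \, \mathrm{d}t \leq \|v\|^2 \exp\bigl\{\|u\|_{L^2([0,T])}^2\bigr\} \, \Big\langle p_i, \int_0^T Z(t, S_0 S_0^\top) \mathrm{d}t \, p_i \Big\rangle.
\end{align*}

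Finally, I recognize the integral inside the inner product as the reachability Gramian $P$, and use the eigenrelation $P p_i = \lambda_i p_i$ together with $\|p_i\| = 1$ to conclude $\langle p_i, P p_i \rangle = \lambda_i$, which yields the claimed bound. There is no real obstacle here, the argument is a direct chain of inequalities; the only subtle point worth being careful about is ensuring the exponential in Lemma \ref{fund_est} is bounded uniformly in $t \in [0, T]$ so it can be pulled outside the time integral.
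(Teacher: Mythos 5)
Your proposal is correct and follows essentially the same route as the paper's proof: solution representation $S(t)=\Phi(t)S_0v$, Cauchy--Schwarz to isolate $\|v\|^2$ and the quadratic form $p_i^\top\Phi(t)S_0S_0^\top\Phi(t)^\top p_i$, Lemma \ref{fund_est} with the exponential bounded uniformly by $\exp\{\|u\|_{L^2([0,T])}^2\}$, and finally the eigenrelation $p_i^\top P p_i=\lambda_i$. The only (immaterial) difference is that you apply Cauchy--Schwarz pointwise in $t$ before integrating, whereas the paper carries the time integral through the estimate.
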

\begin{proof}
Below, let us exploit that
  \begin{align*}
 S(t) = \Phi(t, 0) S(0)=\Phi(t) s_0 = \Phi(t) S_0 v,\quad t\in [0, T].
\end{align*}
Using the inequality of Cauchy-Schwarz yields\begin{align*}
  \int_0^T \langle S(t), p_{i}\rangle^2 \mathrm{d}t =  \int_0^T \langle \Phi(t) S_0 v, p_{i}\rangle^2 \mathrm{d}t = \int_0^T \langle  v, S_0^\top \Phi(t)^\top p_{i}\rangle^2 \mathrm{d}t  \leq \left\|v\right\|^2 p_{i}^\top  \int_0^T \Phi(t) S_0 S_0^\top \Phi(t)^\top \mathrm{d}t\, p_{i}.
  \end{align*}
Based on Lemma \ref{fund_est}, we obtain\begin{align*}
  \int_0^T \langle S(t), p_{i}\rangle^2 \mathrm{d}t
   &\leq \left\|v\right\|^2 \exp\left\{\int_0^T \left\|u(t)\right\|^2 \mathrm{d}t\right\}  p_{i}^\top  \int_0^T Z(t, S_0 S_0^\top) \mathrm{d}t\, p_{i} \\
   &\leq \left\|v\right\|^2\exp\left\{\left\|u\right\|_{L^2([0, T])}^2\right\} p_{i}^\top  P\, p_{i}
   =\left\|v\right\|^2\exp\left\{\left\|u\right\|_{L^2([0, T])}^2\right\}  \lambda_{i}.
                       \end{align*}
This concludes the proof.
                       \end{proof}
Proposition \ref{unimportant_state0} implies that we can neglect the eigenvector $p_i$ of $P$ in the bilinear model if the corresponding eigenvalue $\lambda_{i}$ is small. Therefore, eigenspaces of $P$ associated with small eigenvalues can be removed from the system to obtain a reduced model. Let us now continue by identifying redundancies in the linear map $CS(t)$, $t\in [0, T]$, characterized by the matrix $C$.
Let us consider $Z^*= Z^*(t, C^\top C)$, $t\in [0, T]$, satisfying \begin{align}\label{eqZad}
 \dot {Z}^*(t) = A_0^\top Z^*(t) +  Z^*(t) A_0+\sum_{i=1}^m A_i^\top Z^*(t) A_i,\quad Z^*(0) = C^\top C,
\end{align}
where the superscript $*$ indicates that the operator defining the right side of \eqref{eqZad} is the adjoint operator of the one occurring in \eqref{eqZ}. Based on the solution of \eqref{eqZad}, we define an observability Gramian \begin{align*}
Q:=\int_0^T Z^*(t, C^\top C) \mathrm{d}t                                                                                                                                                                                                \end{align*}
again omitting the dependence on the terminal time $T$. Let $(q_i)$ now be an orthonormal basis of $\mathbb R^n$ consisting of eigenvectors of $Q$. Then, we can always represent $S(t_0)=\sum_{i=1}^n \langle S(t_0), q_{i}\rangle q_{i}$ for any $t_0\in[0, T]$. Since we can write the output of the signature equation as \begin{align*}
 y_S(t)= C S(t) = C \Phi(t, t_0) S(t_0)= \sum_{i=1}^n \langle S(t_0), q_{i}\rangle  C \Phi(t, t_0) q_i,     \quad t\in[t_0, T],                                                                                                                                                                                                                                                                                                                \end{align*}
we are now able to analyze the contribution of the direction $q_i$ to $y_S(t)$ on the interval  $[t_0, T]$. In fact, we need to measure the magnitude of $C \Phi(t, t_0) q_i$ on this interval. This result is stated in the next proposition.
\begin{prop}\label{redundant_out}
Let $(q_{i})$ be an orthonormal basis of $\mathbb R^n$ consisting of eigenvectors of the observability Gramian $Q$. Then, we have
 \begin{align*}
  \int_{t_0}^T \left\|C \Phi(t, t_0) q_{i}\right\|^2 \mathrm{d}t \leq \mu_{i}\exp\left\{\left\|u\right\|_{L^2([0, T])}^2\right\},
                    \end{align*}
where $0\leq t_0<T$ and $\mu_{i}$ is the eigenvalue associated with $q_{i}$.
\end{prop}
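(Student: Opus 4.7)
The plan is to mirror the proof of Proposition \ref{unimportant_state0}, but carried out on the dual side using the adjoint analog of Lemma \ref{fund_est}. Concretely, the quantity to control is
\begin{align*}
\int_{t_0}^T \|C\Phi(t,t_0) q_i\|^2 \mathrm{d}t = q_i^\top \left(\int_{t_0}^T \Phi(t,t_0)^\top C^\top C\, \Phi(t,t_0)\, \mathrm{d}t\right) q_i,
\end{align*}
so the task reduces to dominating the integrand (a positive semidefinite matrix pencil) by an object built from the generator of \eqref{eqZad}.

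First, I would invoke the adjoint counterpart of Lemma \ref{fund_est}, which is the natural dual statement: for any positive semidefinite $M$,
\begin{align*}
\Phi(t,t_0)^\top M\, \Phi(t,t_0) \leq \exp\!\Big\{\int_{t_0}^t \|u(v)\|^2 \mathrm{d}v\Big\} Z^*(t-t_0, M),
\end{align*}
where $Z^*(\cdot, M)$ solves \eqref{eqZad} with initial value $M$. This result is proven in exactly the same way as Lemma \ref{fund_est} and is available in \cite{Redmann2021, Redmann2023}; applying it with $M=C^\top C$ yields the pointwise inequality needed for the integrand.

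Next, I would integrate this inequality over $[t_0,T]$. Using $\int_{t_0}^t \|u(v)\|^2\mathrm{d}v \leq \|u\|_{L^2([0,T])}^2$ for all $t\in[t_0,T]$ and performing the substitution $s=t-t_0$, the right-hand side becomes
\begin{align*}
\exp\!\big\{\|u\|_{L^2([0,T])}^2\big\}\int_0^{T-t_0} Z^*(s, C^\top C)\, \mathrm{d}s.
\end{align*}
Since $Z^*(s, C^\top C)$ is positive semidefinite for all $s\geq 0$ (the flow defined by \eqref{eqZad} preserves positive semidefiniteness, as its generator is a Lindblad-type operator), we may enlarge the domain of integration to $[0,T]$ and obtain the observability Gramian $Q$. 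Sandwiching with $q_i$ and using $q_i^\top Q q_i = \mu_i$ closes the argument.

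The only step that is not bookkeeping is the adjoint version of Lemma \ref{fund_est}; once that is in hand, everything else is a direct transcription of the reachability argument. Since the lemma is explicitly cited from prior work, I expect no genuine obstacle, and the proof amounts to the three-line chain of inequalities above.
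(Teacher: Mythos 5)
Your argument reaches the correct conclusion but routes through a statement the paper never formulates: a pointwise \emph{adjoint} version of Lemma \ref{fund_est}, namely
\begin{align*}
\Phi(t,t_0)^\top M\,\Phi(t,t_0) \;\leq\; \exp\Big\{\textstyle\int_{t_0}^t \|u(v)\|^2\,\mathrm{d}v\Big\}\, Z^*(t-t_0,M).
\end{align*}
This inequality is true, but your justification (``proven in exactly the same way as Lemma \ref{fund_est}'') glosses over the one genuinely non-trivial point. Differentiating $\Phi(t,t_0)^\top M\Phi(t,t_0)$ in $t$ yields $\Phi(t,t_0)^\top\big(\mathcal{A}(t)^\top M+M\mathcal{A}(t)\big)\Phi(t,t_0)$ with $\mathcal{A}(t)=A_0+\sum_i A_iu_i(t)$ trapped \emph{inside} the congruence; since $\Phi$ does not commute with the $A_i$, the primal proof does not transcribe verbatim. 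One must either differentiate with respect to the \emph{initial} time (the map $s\mapsto\Phi(t,s)^\top M\Phi(t,s)$ satisfies a backward Lyapunov-type equation, and time-homogeneity of the coefficients converts the terminal-value problem into \eqref{eqZad}), or deduce the adjoint bound from the primal one by trace duality. The paper takes the second route without ever isolating an adjoint lemma: it applies Lemma \ref{fund_est} to the rank-one matrix $M=q_iq_i^\top$ and then proves $\trace\big(C^\top C\,Z(t,q_iq_i^\top)\big)=\trace\big(q_iq_i^\top Z^*(t,C^\top C)\big)$ via vectorization ($\vect(Z(t,M))=\expn^{Kt}\vect(M)$ versus $\expn^{K^\top t}\vect(C^\top C)$), which is precisely the duality your adjoint lemma encodes. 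Your remaining steps --- enlarging the exponent to $\|u\|^2_{L^2([0,T])}$, substituting $s=t-t_0$, extending the integration domain from $[0,T-t_0]$ to $[0,T]$ using positivity preservation of the flow of \eqref{eqZad}, and $q_i^\top Qq_i=\mu_i$ --- match the paper's. So the proposal is correct and arguably cleaner once the adjoint estimate is properly established, but that estimate requires an actual argument (backward-time comparison or the duality computation), not an appeal to symmetry or an unverified citation.
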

\begin{proof}
Applying Lemma \ref{fund_est}, we find that \begin{align}\nonumber
  \int_{t_0}^T \left\|C \Phi(t, t_0) q_{i}\right\|^2 \mathrm{d}t &=     \int_{t_0}^T q_{i}^\top  \Phi(t, t_0)^\top C^\top C \Phi(t, t_0) q_{i} \mathrm{d}t = \int_{t_0}^T \trace\left(C \Phi(t, t_0) q_{i}q_{i}^\top\Phi(t, t_0)^\top C^\top\right) \mathrm{d}t \\ \nonumber&\leq
  \int_{t_0}^T \trace\left(C \exp\left\{\int_{t_0}^t \left\|u(v)\right\|^2 \mathrm{d}v\right\}  Z(t-t_0, q_{i}q_{i}^\top) C^\top\right) \mathrm{d}t \\ \nonumber&\leq
  \exp\left\{\left\| u\right\|_{L^2([0, T])}^2\right\}\int_{0}^T \trace\left(C  Z(t, q_{i}q_{i}^\top) C^\top\right) \mathrm{d}t\\ \label{insert_here}
 &=  \exp\left\{\left\| u\right\|_{L^2([0, T])}^2\right\} \int_{0}^T \trace\left(C^\top C Z(t, q_{i}q_{i}^\top)\right)\mathrm{d}t.
                   \end{align}
We exploit the relation between the trace and the vectorization $\vect(\cdot)$ of the matrix yielding  $\trace\left(C^\top C Z(t, q_{i}q_{i}^\top)\right)=\langle\vect(C^\top C), \vect(Z(t, q_{i}q_{i}^\top)\rangle$. Applying the vectorization to both sides of \eqref{eqZ} with $M=q_{i}q_{i}^\top$, we obtain \begin{align}\label{vec_mat_ODE}
 \frac{\mathrm{d}}{\mathrm{d}t} {\vect(Z(t))} = {K} \vect(Z(t)),\quad \vect(Z(0)) = \vect(q_{i}q_{i}^\top),\quad t\in [0, T],
 \end{align}
where we define \begin{align}\label{def_kron_mat}
K=A_0 \otimes I + I \otimes A_0 +\sum_{i=1}^m A_i\otimes A_i.
                \end{align}
The solution of \eqref{vec_mat_ODE} is $\expn^{K t}  \vect(q_{i}q_{i}^\top)$. Therefore, we obtain \begin{align*}
  \trace\left(C^\top C Z(t, q_{i}q_{i}^\top)\right)&=\langle\vect(C^\top C), \vect(Z(t, q_{i}q_{i}^\top)\rangle =     \langle\vect(C^\top C), \expn^{K t}  \vect(q_{i}q_{i}^\top)\rangle\\
  &=\langle\expn^{K^\top t} \vect(C^\top C), \vect(q_{i}q_{i}^\top)\rangle.
\end{align*}
By vectorizing \eqref{eqZad}, we observe that $\vect({Z}^*(t, C^\top C))=\expn^{K^\top t} \vect(C^\top C)$. Hence, we have
\begin{align*}
  \trace\left(C^\top C Z(t, q_{i}q_{i}^\top)\right)= \langle\vect({Z}^*(t, C^\top C)), \vect(q_{i}q_{i}^\top)\rangle = \trace\left(q_{i}q_{i}^\top {Z}^*(t, C^\top C)\right).
 \end{align*}
Inserting this into \eqref{insert_here} yields
\begin{align*}
  \int_{t_0}^T \left\|C \Phi(t, t_0) q_{i}\right\|^2 \mathrm{d}t &\leq
   \exp\left\{\left\| u\right\|_{L^2([0, T])}^2\right\} \int_{0}^T  \trace\left(q_{i}q_{i}^\top {Z}^*(t, C^\top C)\right)\mathrm{d}t\\
  & = \exp\left\{\left\| u\right\|_{L^2([0, T])}^2\right\} q_{i}^\top \int_{0}^T  {Z}^*(t, C^\top C)\mathrm{d}t\, q_{i}= \exp\left\{\left\| u\right\|_{L^2([0, T])}^2\right\} \mu_{i}.
  \end{align*}
This concludes the proof.
\end{proof}
Proposition \ref{redundant_out} identifies the eigenspaces of $Q$ to be of low relevance for the output $y_S$ in the bilinear model \eqref{tr_sig_ode} if they are associated with small eigenvalues $\mu_{i}$. In summary, this means that the truncation of eigenspaces of $P$ and $Q$ corresponding to small eigenvalues will not affect the dynamics much. However, the eigenspaces of $P$ and $Q$ do not coincide. Therefore, a transformation is introduced in the following section that creates a system with equal and diagonal Gramians.

\subsection{Reduced order model by time-limited balanced truncation}\label{sec_rom}

Suppose that $\mathcal T\in\mathbb R^{n\times n}$ is an invertible matrix that is used to define the variable $S_b=\mathcal T S$ with $S$ being the solution of the ODE in \eqref{tr_sig_ode}. The matrix $\mathcal T$ is chosen to simultaneously diagonalize the Gramians $P$ and $Q$ of \eqref{tr_sig_ode} in case such a transformation exists. We insert $S=\mathcal T^{-1} S_b$ into \eqref{tr_sig_ode} yielding
\begin{equation}
\label{tr_sig_trans}
\begin{aligned}
		\dot{S_b}(t)&=\mathcal T A_0 \mathcal T^{-1} S_b(t) + \sum_{i=1}^{m}\mathcal T A_i\mathcal T^{-1} S_b(t)u_i(t),\quad S_b(0)=\mathcal T s_0=\mathcal T S_0 v,\\
		y_S(t)&={C\mathcal T^{-1}}S_b(t),
	\end{aligned}
	\end{equation}
preserving the output $y_S$ for a given input $u$. However, \eqref{tr_sig_trans} has modified Gramians. Let $Z_b(\cdot, \mathcal T S_0 S_0^\top \mathcal T^\top)$ and $Z_b^*(\cdot, \mathcal T^{-\top} C^\top C\mathcal T^{-1})$ denote the solutions of \eqref{eqZ} and \eqref{eqZad}, respectively, when replacing $(A_0, \ldots, A_m)$ by $\Big(\mathcal T A_0 \mathcal T^{-1},\ldots, \mathcal T A_m\mathcal T^{-1} \Big)$. Then, the reachability and observability Gramians of \eqref{tr_sig_trans} are
\begin{align}\label{def_bal_gram}
  P_b=\int_0^T Z_b(t, \mathcal T S_0 S_0^\top \mathcal T^\top) \mathrm{d}t,\quad Q_b=\int_0^T Z_b^*(t, \mathcal T^{-\top} C^\top C\mathcal T^{-1}) \mathrm{d}t.
  \end{align}
  If we multiply \eqref{eqZ} with $\mathcal T$ from the left and with $\mathcal T^\top$ from the right, we see that $Z_b(t, \mathcal T S_0 S_0^\top \mathcal T^\top)=  \mathcal T  Z(t,S_0 S_0^\top) \mathcal T^\top$. Multiplying \eqref{eqZad} with $\mathcal T^{-\top}$ from the left and with $\mathcal T^{-1}$ from the right, we also observe that $Z_b^*(\cdot, \mathcal T^{-\top} C^\top C\mathcal T^{-1})= \mathcal T^{-\top} Z^*(\cdot,  C^\top C)\mathcal T^{-1}$. Inserting these identities into \eqref{def_bal_gram}, we obtain that the Gramian of \eqref{tr_sig_trans} are given by
  \begin{align}\label{transformed_gram}                                                                     P_b= \mathcal  T P \mathcal T^\top, \quad
   Q_b = \mathcal T^{-\top} Q \mathcal T^{-1}.
 \end{align}
Now, we specify the balancing transformation $\mathcal T$ that simultaneously diagonalizes both Gramians.
\begin{prop}\label{prop_bal}
Suppose that the reachability Gramian $P$ and the observability Gramian $Q$ are positive definite. We achieve that  $P_b=Q_b = \Sigma= \diag(\sigma_1,\ldots,\sigma_n)$ using the balancing transformation \begin{equation}\label{bal_transform}
  \mathcal T=\Sigma^{\frac{1}{2}} V^\top L_P^{-1},
\end{equation}
with the factorization $P=L_PL_P^\top$ and the spectral decomposition $L_P^\top QL_P=V\Sigma^2 V^\top$, where $V$ is orthogonal. Moreover, $\sigma_1^2, \dots, \sigma_n^2$ are the eigenvalues of $PQ$.
\end{prop}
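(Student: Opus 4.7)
The plan is a direct computational verification, relying only on \eqref{transformed_gram} together with the two prescribed factorizations, so no new machinery is needed. The whole argument amounts to substituting the explicit form of $\mathcal T$ into the expressions $\mathcal T P\mathcal T^\top$ and $\mathcal T^{-\top}Q\mathcal T^{-1}$ and simplifying using the orthogonality of $V$ and the defining relation $L_P^\top Q L_P=V\Sigma^2 V^\top$.

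First, I would record the inverse $\mathcal T^{-1}=L_P V\Sigma^{-1/2}$, which is well-defined because $P$ is positive definite (so $L_P$ is invertible) and $Q$ is positive definite (so all $\sigma_i>0$, giving that $\Sigma^{1/2}$ is invertible). Then $\mathcal T^{-\top}=\Sigma^{-1/2}V^\top L_P^\top$. Plugging into \eqref{transformed_gram}, the reachability part collapses as
\begin{align*}
P_b=\mathcal T P\mathcal T^\top=\Sigma^{1/2}V^\top L_P^{-1}(L_P L_P^\top)L_P^{-\top}V\Sigma^{1/2}=\Sigma^{1/2}V^\top V\Sigma^{1/2}=\Sigma,
\end{align*}
since $V$ is orthogonal. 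Symmetrically, for the observability side,
\begin{align*}
Q_b=\mathcal T^{-\top}Q\mathcal T^{-1}=\Sigma^{-1/2}V^\top(L_P^\top Q L_P)V\Sigma^{-1/2}=\Sigma^{-1/2}V^\top(V\Sigma^2 V^\top)V\Sigma^{-1/2}=\Sigma,
\end{align*}
again using $V^\top V=I$.

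For the last claim on the eigenvalues of $PQ$, I would use the similarity $L_P^{-1}(PQ)L_P=L_P^{-1}L_PL_P^\top Q L_P=L_P^\top Q L_P=V\Sigma^2 V^\top$, so $PQ$ is similar to $V\Sigma^2 V^\top$, and hence its eigenvalues are exactly $\sigma_1^2,\dots,\sigma_n^2$. There is no real obstacle here; the only subtlety worth flagging is that positive definiteness of both $P$ and $Q$ is essential, as it guarantees both the invertibility of $\mathcal T$ and the strict positivity of every $\sigma_i$, which in turn makes $\Sigma^{\pm 1/2}$ meaningful. Everything else is bookkeeping with the factorizations stated in the proposition.
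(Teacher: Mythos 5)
Your proposal is correct and follows essentially the same route as the paper: substitute $\mathcal T$ and $\mathcal T^{-1}$ into \eqref{transformed_gram} to get $P_b=Q_b=\Sigma$, then identify the eigenvalues of $PQ$ with those of $L_P^\top Q L_P$ (the paper invokes the equality of spectra of $L_P^\top Q L_P$ and $L_PL_P^\top Q=PQ$, which is exactly your similarity conjugation by $L_P$). Your remark that positive definiteness of $P$ and $Q$ is what makes $L_P^{-1}$ and $\Sigma^{\pm 1/2}$ well-defined is a sensible addition but not a departure from the paper's argument.
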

\begin{proof}
 We insert \eqref{bal_transform} into \eqref{transformed_gram} and obtain $P_b= \Sigma^{\frac{1}{2}} V^\top L_P^{-1} P L_P^{-\top} V \Sigma^{\frac{1}{2}}=\Sigma$ as well as $Q_b= \Sigma^{-\frac{1}{2}} V^\top L_P^{\top} Q L_P V \Sigma^{-\frac{1}{2}}=\Sigma$. Since the spectral decomposition of $L_P^\top QL_P$ is computed, we know $\sigma_1^2, \dots, \sigma_n^2$ are the eigenvalues of this matrix. On the other hand, $L_P^\top QL_P$ has the same spectrum like $L_PL_P^\top Q = PQ$. This leads to the last statement of this proposition concluding the proof.
\end{proof}
Using the balancing transformation in \eqref{bal_transform}, we know that the Gramians are both the same and diagonal. Consequently, their eigenvectors represent the canonical basis of $\mathbb R^n$ meaning that $p_i=q_i=e_i$ in Propositions \ref{unimportant_state0} and \ref{redundant_out}, where $e_i$ is the $i$th unit vector in $\mathbb R^n$. Looking from this perspective at these propositions, we can conclude that the $i$th component of the balanced state $S_b$ is of low relevance if the Hankel singular value $\sigma_i$ is small. Let us introduce the partition $S_b(t)=\begin{bmatrix}
S_1(t)\\ S_2(t)
\end{bmatrix}$, $t\in [0, T]$, of the balanced state assuming that entries of $S_1(t)\in\mathbb R^r$ belong to the Hankel singular values  $\sigma_1,\ldots,\sigma_{r}$. Moreover, the variables in $S_2(t)\in\mathbb R^{n-r}$ are supposed to be truncated as they correspond to the $n-r$ smallest values $\sigma_{r+1},\ldots,\sigma_n$. If these truncated values are small, a low error in the approximation is expected. The reduced order model involves coefficients that we obtain from the following partition
  \begin{equation*}
  \begin{aligned}
\mathcal T A_i \mathcal T^\top  = \begin{bmatrix}{\widecheck A_i}&\star\\
\star&\star\end{bmatrix},\quad \mathcal T S_0 &= \begin{bmatrix}{\widecheck S_0}\\\star\end{bmatrix}, \quad C\mathcal T^{-1}= \begin{bmatrix}{\widecheck C} &
\star\end{bmatrix}
  \end{aligned}
  \end{equation*}
with $\widecheck A_i\in \mathbb R^{r \times r}$ ($i\in\{0, 1, \dots, m\}$), $\widecheck C\in \mathbb R^{p \times r}$ and a matrix $\widecheck S_0$ with $r$ rows. The reduced order model is now obtained by removing the dynamics of $S_2$ in \eqref{tr_sig_trans}. These variables are further set to zero, meaning that $S_2\equiv 0$ in the equations for $S_1$. This leads to the reduced model
\begin{equation}\label{rom}
  \begin{aligned}
		\dot{\widecheck S}(t)&=\widecheck  A_0 \widecheck  S(t) + \sum_{i=1}^{m} \widecheck  A_i\widecheck  S(t)u_i(t),\quad {\widecheck S}(0)={\widecheck S}_0 v,\\
		 \widecheck y_S(t)&=\widecheck {C}\widecheck S(t).
	\end{aligned}\end{equation}
This is a model like in
\eqref{rom_intro} satisfying $\widecheck y_S\approx y_S$ if the truncated Hankel singular values $\sigma_{r+1}, \dots, \sigma_n$ are small. In particular, fixing $s_0=e_1$ ($S_0=s_0=e_1$ and $v=1$), we obtain a reduced system for the truncated signature $S^N(\hat U)$. In fact, it is not possible to easily extend MOR error bounds from \cite{Redmann2023}, since stability in the bilinear system is required. In fact, an error analysis is still possible using the ideas of \cite{redmann_jamshidi}. However, this analysis is beyond the scope of this paper.

\subsection{Computation of Gramians}

We discuss the computation of the Gramians $P$ and $Q$, required in the MOR procedure in Subsection \ref{sec_rom}, for the particular case when $S(t)=S_{0, t}^N(\hat U)$ in \eqref{tr_sig_ode}.
This means that the matrices $A_0, A_1, \dots, A_m$ entering \eqref{tr_sig_ode} are generated according to Proposition \ref{SigODE}.
\begin{remark}
  \label{rem:nilpotency-matrices}
  Let the matrices $A_0, A_1, \dots, A_m$ of \eqref{tr_sig_ode} be like in  Proposition \ref{SigODE}. Then, they are nilpotent with order $N+1$. This means that for any $i_1, \ldots, i_N, i_{N+1} \in \{0, 1, \ldots, m\}$ and $1 \leq k \leq N$, we have
  \begin{equation*}
    A_{i_1} \cdot A_{i_2} \cdots A_{i_k} \neq 0, \text{ but } A_{i_1} \cdot A_{i_2} \cdots A_{i_N} \cdot A_{i_{N+1}}=0.
  \end{equation*}
Using this property of $A_0, A_1, \dots, A_m$, we further obtain that $K$ defined in \eqref{def_kron_mat} is nilpotent with order $2 N +1$. This means that $K^{2N +1}= 0$ while $K^{i}\neq 0$ for $i\in\{1, \dots, 2N\}$.
\end{remark}
Given the situation of  Remark \ref{rem:nilpotency-matrices}, we know from Subsection \ref{sec_gram} that \begin{align*}
 \vect(P) &= \int_0^T \vect\big(Z(t, S_0 S_0^\top)\big) \mathrm{d}t = \int_0^T\expn^{K t} \vect(S_0 S_0^\top) \mathrm{d}t =
  \sum_{j=0}^{2N} \int_0^T \frac{t^{j}}{j!}  \mathrm{d}t\,  K^j\vect(S_0 S_0^\top)\\
  &= \sum_{j=0}^{2N} \frac{T^{j+1}}{(j+1)!} K^j\vect(S_0 S_0^\top)
 \end{align*}
as well as  \begin{align*}
 \vect(Q) = \int_0^T \vect\big(Z^*(t, C^\top C)\big) \mathrm{d}t = \int_0^T\expn^{K^\top t} \vect(C^\top C) \mathrm{d}t =
  \sum_{j=0}^{2N} \frac{T^{j+1}}{(j+1)!} (K^\top)^j\vect(C^\top C).
 \end{align*}
Generally, it is not a good idea to compute such Gramians from a vectorized form, since we have to deal with an $n^2\times n^2$ matrix $K$, where already $n$ is large. However, this special situation of having extremely sparse matrices $A_0, A_1, \dots, A_m$ like in  Proposition \ref{SigODE} allows to derive $P$ and $Q$ from vectorizations in moderate high dimensions. There seems to be no alternative at the moment as we are in the time-limited case, where the Gramians are not easily accessible via matrix equations. To illustrate this, let us integrate \eqref{eqZ} and \eqref{eqZad} over the interval $[0, T]$. The resulting matrix equations for $P$ and $Q$ involve unknown terms $Z(T, S_0 S_0^\top)$ and  $Z^*(T, C^\top C)$ that need to be precomputed. This is very challenging as well. In some situations it might be beneficial to not work with $K$, but the associated Lyapunov operators in the context of computing $P$ and $Q$. Therefore, let us introduce an operator $\mathcal L$ and its adjoint via\begin{align*}
\mathcal L(X)= A_0 X+X A_0^\top + \sum_{i=1}^m A_i X A_i^\top,\quad
\mathcal L^*(X)= A_0^\top X+X A_0 + \sum_{i=1}^m A_i^\top X A_i,
                                                       \end{align*}
 where $X$ is a matrix of suitable dimensions. We can devectorize the above identities for $\vect(P)$ and $\vect(Q)$ leading to
 \begin{align}\label{rep_P_Q}
P = \sum_{j=0}^{2N} \frac{T^{j+1}}{(j+1)!} \mathcal L^j(S_0S_0^\top), \quad
Q  =\sum_{j=0}^{2N} \frac{T^{j+1}}{(j+1)!} (\mathcal L^*)^j(C^\top C).
 \end{align}
In \eqref{rep_P_Q}, the expression $\mathcal L^j$ stands for $j$-times applying the operator $\mathcal L$.

	\section{Numerical experiments}

\subsection{Learning the matrix $C$}\label{sec_learn_C}

Corollary \ref{cor_approx_y} ensures that the output $y$ in \eqref{eq_y} can be approximated arbitrary well by a linear map of the truncated signature function $S(t; u):=S^N_{0,t}(\hat{U})$, $t\in [0, T]$, given that the degree $N$ is sufficiently large and that the functions $f_0$, $f$, $c$ are at least locally Lipschitz with $f_0$, $f$ satisfying a one-sided linear growth condition. The unknown linear map is characterized by a matrix $C$ that we need to compute in practice. We fix a set of training controls $\{\bar u^{(1)}, \dots, \bar u^{(\mathfrak n_u)}\}$ within the learning procedure and determine $C$ from the following least squares problem \begin{align}\label{least_sqaures}
 \min_{C\in\mathbb R^{p\times n}}\sum_{k=1}^{\mathfrak n_u} \sum_{i=1}^{\mathfrak n} \|y(t_i; \bar u^{(k)})-C S(t_i; \bar u^{(k)}) \|^2,                                                                                                                                                                                                                                                                                                                                                                                                                                                                                                                                                                                                                                                                \end{align}
where $0=t_1<t_2<\dots <t_{\mathfrak n}=T$ is a partition of $[0, T]$. The main challenge here is to collect linear independent data $y(t_i; \bar u^{(k)})$ to have a well-posed regression problem in \eqref{least_sqaures}. This heavily depends on finding suitable training controls that also guarantee a good approximation by the truncated signature for a large set of test controls. If the data does not contain enough information, it is possible to regularize the problem, e.g., in the sense of Tikhonov (ridge regression). In this paper, we usually assume the controls to be in $L^2([0, T]; \mathbb R^m)$ which is also true for the training set. However, the regression procedure to solve \eqref{least_sqaures} highly benefits from randomized irregular control which we use in the numerical experiments below. In fact, we choose \begin{align*}
\bar u^{(k)}(t)=c_w\frac{\mathrm{d}}{\mathrm{d}t} W(t, \omega_k),
\end{align*}
where $W$ is an $m$-dimensional standard Wiener process, $c_w>0$ is a constant and $\omega_k\in\Omega$ is an outcome. Consequently, $\bar u^{(k)}$ is a realization of white noise meaning that we computed $C$ in the associated Ito SDE setting, see Remark \ref{rem_link_sde}. We observe that this leads to a well-posed problem \eqref{least_sqaures} for the numerical examples that we study. Additionally, the approximation quality is high for all test controls $u\in L^2([0, T]; \mathbb R^m)$ used in our simulations.
\begin{remark}
The step of learning the matrix $C$ is crucial for our approach. This procedure is generally very challenging. On the other hand, $C$ is only data-informed, because we are solving \eqref{least_sqaures} requiring only output data of the form $y(t_i; \bar u^{(k)})$. Therefore, we do not need any information on the original nonlinear differential equation \eqref{ODE} or its output in \eqref{eq_y}. This makes the signature approach interesting in the context of data-fitting by (low-order) bilinear systems.
\end{remark}
Below, we illustrate that the signature is also meaningful when the complexity of large-scale nonlinear systems is supposed to be reduced.

\subsection{Approximating high-dimensional nonlinear systems}

We formally consider the following controlled reaction diffusion equation
\begin{equation}\label{control_reaction_diff}
\begin{aligned}
 \frac{\partial}{\partial t} v_t(\zeta) =\frac{\partial^2}{\partial \zeta^2} v_t(\zeta) &+ \mathfrak{f}_0\big(v_t(\zeta)\big)  + \sum_{i=1}^m \mathfrak f_i\big(\zeta, v_{t}(\zeta)\big) u_i(t),\quad \zeta\in (0, 1), \quad t\in (0, T), \\
 v_0(\cdot)&\equiv 0.5 \sin(\cdot), \quad
  v_t(0) = 0\quad\text{and}\quad  v_t(1) = 0,                                                                                                                                 \end{aligned}
\end{equation}
with  Dirichlet boundaries. We apply a finite difference scheme to \eqref{control_reaction_diff}. We fix a spatial step size $h:=\frac{1}{d+1}$ and introduce a grid by $\zeta_j = j h$ for $j=0, 1, \dots, d+1$. We obtain \begin{equation}    \label{finite_difference_model}
\begin{aligned}
\dot x_1(t) &= \frac{x_2(t)-2x_1(t)}{h^2} +\mathfrak{f}_0\big(x_1(t)\big) + \sum_{i=1}^m \mathfrak f_i\big(\zeta_1, x_1(t)\big) u_i(t),\\
\dot x_j(t) &= \frac{x_{j+1}(t)-2x_j(t)+x_{j-1}(t)}{h^2} +\mathfrak{f}_0\big(x_j(t)\big) + \sum_{i=1}^m \mathfrak f_i\big(\zeta_j, x_j(t)\big) u_i(t), \\
\dot x_d(t) &= \frac{-2x_d(t)+x_{d-1}(t)}{h^2} + \mathfrak{f}_0\big(x_d(t)\big) + \sum_{i=1}^m \mathfrak f_i\big(\zeta_d, x_d(t)\big) u_i(t)                                                                                           \end{aligned}
\end{equation}
for $j\in\{2, \dots, d-1\}$. The intuition is that $x_i(t)\approx v_t(\zeta_i)$ for $i\in\{1, \dots, d\}$. We choose $m=2$, $T=1$ and $d=1000$ as well as $\mathfrak{f}_0(v)=(v-v^3)$, $\mathfrak f_1\big(\zeta, v\big)=\expn^{\zeta}$ and $\mathfrak f_2\big(\zeta, v\big)=v^2$ for $v\in \mathbb R$. This leads to $f_0(x)= \mathcal Ax + x - x^{\circ 3}$, $f_1(x)=B=\big(\expn^{\zeta_1}, \dots, \expn^{\zeta_d}\big)^\top$ and $f_2(x)=x^{\circ 2}$ in \eqref{ODE}, where $\mathcal A\in\mathbb R^{d\times d}$ is a discrete version of the second derivative, $f=(f_1, f_2)$ and $x\in\mathbb R^d$. We introduce the quantity of interest as follows \begin{align}\label{output_num}
		y(t)=\exp\bigg\{\frac{1}{d}\sum_{j=1}^d x_j(t)\bigg\}
	\end{align}
meaning that $c(x)=\exp\big\{\frac{1}{d}\sum_{j=1}^d x_j\big\}$ in \eqref{eq_y}. Consequently, we have an example, in which $f_0$, $f$ and $c$ are not globally Lipschitz. We use test controls for the experiments that are easy to parameterize. These are \begin{align}\label{test_control}
  u^{(k)}(t)=\left(\begin{matrix} u_1^{(k)}(t)\\u_2^{(k)}(t)\end{matrix}\right)=\left(\begin{matrix} \cos(k t)\\ \sin(kt)\end{matrix}\right),\quad k=1,\dots, 1000.\end{align}
\begin{figure}[ht]
\center
\includegraphics[width=8cm,height=7cm]{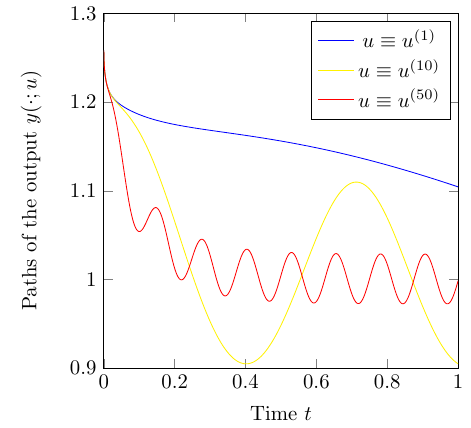}
\caption{Three example for $y(\cdot, u)$ in \eqref{output_num} with $u= u^{(1)}, u^{(10)}, u^{(50)}$.}\label{fig1}
\end{figure}
We show examples of the output in \eqref{output_num} using three different test controls from \eqref{test_control}, see Figure \ref{fig1}. Fixing $k=1, 10, 50$ in this figure, we can see that functions that we try to reproduce by the signature model differ significantly. The first step now is to approximate $y$ in \eqref{output_num} by $y_S=CS(\cdot; u)$, where $S(t; u):=S^N_{0,t}(\hat{U})$ is the truncated signature of order $N$. This means that system \eqref{tr_sig_ode} is considered with $s_0=e_1$ and matrices $A_0, A_1, \dots, A_m$ like in Proposition \ref{SigODE}. We compute $C$ according to the procedure explained in Section \ref{sec_learn_C}. We simulate $\mathfrak n_u=1000$ paths of a standard Wiener process $W$ and fix $c_w=0.2$ to determine $C$ from the least squares problem \eqref{least_sqaures}. We check for the quality of the resulting matrix $C$ by computing the following error \begin{align}\label{def_l2_dis}
\mathcal E_{\sig}:=\|y-y_S \|_{L^2}:=\sqrt{\frac{1}{1000} \sum_{k=1}^{1000} \int_0^T \|y(t; u^{(k)})- y_S(t; u^{(k)}) \|^2\mathrm{d}t}.
\end{align}
We intend to have a (bi)linearization error $\mathcal E_{\sig}$ smaller than one percent. This can be achieved by setting $N=5$ resulting in $\mathcal E_{\sig}=3.7302$e$-03$. The signature approximation error is depicted by a red solid line in Figure \ref{fig2}. Having $N=5$ leads to a truncated signature dimension of $n=364$. This already provides a significant complexity reduction as we now deal with a bilinear system taking into account that $S(\cdot; u)$ solves the bilinear system \eqref{tr_sig_ode} with output $y_S(\cdot; u) = C S(\cdot; u)$. Moreover, this bilinear system has a lower dimension than the original problem.  We optimize the dimension of the $n=364$-dimensional system \eqref{tr_sig_ode} further by applying the MOR procedure explained in Section \ref{sec_mor}. Analogue to \eqref{def_l2_dis} we define the signature dimension reduction error $\mathcal E_{\mor}:= \|y_S - \widecheck y_{S} \|_{L^2}$,  where $\widecheck y_{S}$ is the  output of the reduced model \eqref{rom}, and illustrate it in Figure \ref{fig2} as well.  The total error involving both the bilinearization and the MOR error is $\mathcal E_{\reduced-\sig}:= \|y - \widecheck y_{S} \|_{L^2}$. $\mathcal E_{\reduced-\sig}$ now depends on the dominant error of both approximations.
\begin{figure}[ht]
 \begin{minipage}{0.49\linewidth}
  \hspace{-0.5cm}
 \includegraphics[width=1.0\textwidth,height=6cm]{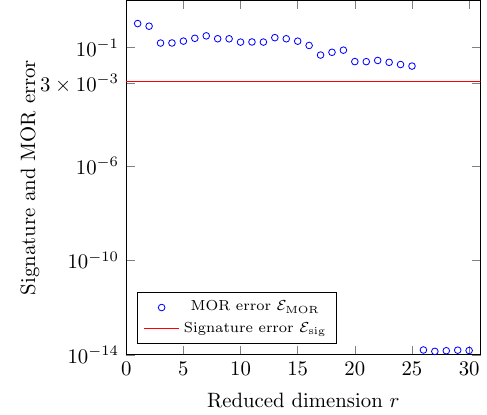}
 \caption{Error signature approximation of $y$ in \eqref{output_num} (solid line) and error when reducing the dimension of the signature (circles).}\label{fig2}
 \end{minipage}
 \hspace{0.2cm}
 \begin{minipage}{0.49\linewidth}
 \includegraphics[width=1.0\textwidth,height=6cm]{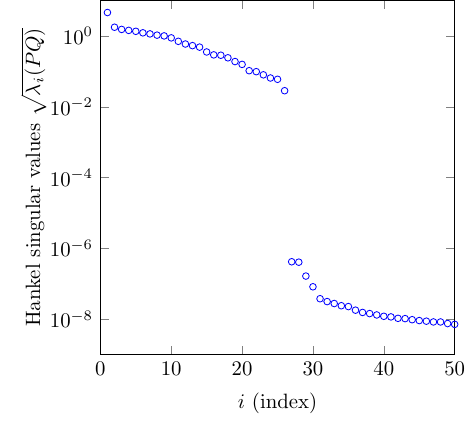}
 \caption{First $50$ Hankel singular singular values of the signature model approximating the output in  \eqref{output_num}.}\label{fig3}
 \end{minipage}
 \end{figure}
We observe from Figure \ref{fig2} that the MOR error $\mathcal E_{\mor}$ is very large for $r\leq 16$ since this error is above ten percent. Reduced order models with dimension $20\leq r\leq 25$ perform with an error of around two percent but in such a scenario the MOR approach will still dominate the entire approximation, i.e., $\mathcal E_{\reduced-\sig}=\mathcal E_{\mor}$. Suddenly, $r\geq 26$ basically leads to an exact reduced model when estimating the signature process. Therefore, we obtain $\mathcal E_{\reduced-\sig}=\mathcal E_{\sig}$. The behavior of the MOR error follows the decay of the Hankel singular values introduced in Proposition \ref{prop_bal}. These are depicted in Figure \ref{fig3}. This emphasizes that the Hankel singular values deliver an excellent a-priori criterion for the expected MOR error in our approach. In summary, we can say that we found a bilinear system \eqref{rom} of dimension $r=26$ that well-approximates a $d=1000$-dimensional system \eqref{finite_difference_model} with polynomial nonlinearities. This illustrates how powerful our approach can be.

\appendix

\section{Proofs of Section \ref{3_1}}

We need the following lemma of Gronwall for the proofs below.

\begin{lem}[Gronwall's lemma -- integral form]\label{gron_int} Suppose that $f, \alpha: [0, T] \rightarrow \mathbb R$ are measurable bounded functions and $\beta: [0, T]\rightarrow \mathbb R$ is a nonnegative integrable function.
If \begin{align*}
    f(t)\leq \alpha(t)+\int_{0}^t \beta(s) f(s) ds
   \end{align*}
for all $t\in [0, T]$, then it holds that \begin{align}\label{gronwallineq}
    f(t)\leq \alpha(t)+\int_{0}^t \alpha(s)\beta(s) \exp\left\{\int_s^t \beta(v)dv\right\} ds
   \end{align}
for all $t\in [0, T]$. A non-decreasing and continuous function $\alpha$ implies that \eqref{gronwallineq} becomes \begin{align}\label{gronwallineq2}
    f(t)\leq \alpha(t)\exp\left\{\int_{0}^t \beta(s)ds\right\}
   \end{align}
   for all $t\in [0, T]$.
\end{lem}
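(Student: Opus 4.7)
The plan is to prove the lemma by reducing it to a linear first-order differential inequality for the auxiliary quantity $F(t) := \int_0^t \beta(s) f(s)\,ds$ and then integrating against the standard exponential integrating factor. Set $B(t) := \int_0^t \beta(v)\,dv$; under the integrability assumption on $\beta$, the function $B$ is absolutely continuous with $B'(t) = \beta(t)$ for a.e.\ $t$.

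First I would substitute the hypothesis $f(s) \leq \alpha(s) + \int_0^s \beta(v) f(v)\,dv = \alpha(s) + F(s)$ into $F'(t) = \beta(t) f(t)$ (valid a.e.\ by the Lebesgue differentiation theorem, since $s \mapsto \beta(s) f(s)$ is integrable, being the product of an integrable and a bounded measurable function). This yields
\begin{align*}
F'(t) - \beta(t) F(t) \leq \alpha(t)\beta(t) \quad \text{a.e.\ on } [0,T].
\end{align*}
Multiplying by the integrating factor $\mathrm{e}^{-B(t)}$ and using the product rule for absolutely continuous functions rewrites this as $\frac{d}{dt}\!\left[\mathrm{e}^{-B(t)} F(t)\right] \leq \alpha(t)\beta(t)\mathrm{e}^{-B(t)}$ a.e. Integrating from $0$ to $t$ (with $F(0)=0$) and multiplying by $\mathrm{e}^{B(t)}$ gives
\begin{align*}
F(t) \leq \int_0^t \alpha(s)\beta(s)\exp\!\left\{\int_s^t \beta(v)\,dv\right\} ds,
\end{align*}
and adding $\alpha(t)$ to both sides and using the original hypothesis $f(t) \leq \alpha(t) + F(t)$ delivers \eqref{gronwallineq}.

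For \eqref{gronwallineq2}, I would use that when $\alpha$ is non-decreasing we may pull $\alpha(t)$ out of the integral, estimating $\alpha(s) \leq \alpha(t)$ for $s \leq t$. Then it remains to observe the identity
\begin{align*}
\int_0^t \beta(s)\exp\!\left\{\int_s^t \beta(v)\,dv\right\} ds \;=\; -\int_0^t \frac{d}{ds}\exp\!\left\{\int_s^t \beta(v)\,dv\right\} ds \;=\; \mathrm{e}^{B(t)} - 1,
\end{align*}
which combines with $\alpha(t) + \alpha(t)(\mathrm{e}^{B(t)}-1) = \alpha(t)\mathrm{e}^{B(t)}$ to conclude.

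The main technical obstacle is justifying the differentiation step without assuming continuity of $\beta f$: here I would appeal to the fact that $F$ is absolutely continuous (being an indefinite Lebesgue integral), so $F'$ exists a.e.\ and the fundamental theorem of calculus applies, allowing the integrating-factor computation to proceed in the a.e.\ sense. Continuity of $\alpha$ in the second assertion is used only to justify the monotonicity-based estimate $\alpha(s) \leq \alpha(t)$ cleanly; non-decreasing measurability alone would already suffice, but the continuous hypothesis avoids measure-theoretic side remarks. An alternative route, avoiding differentiation entirely, would be to iterate the hypothesis and expand in a Neumann-type series $\sum_k \int_{0<s_1<\cdots<s_k<t} \alpha(s_1)\beta(s_1)\cdots\beta(s_k)\,ds$, which telescopes to the same bound via Fubini — worth mentioning as a check, but the integrating-factor route is the shortest.
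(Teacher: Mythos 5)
Your argument is correct and complete. Note that the paper itself does not prove this lemma at all --- it simply cites \cite[Lemma A.3]{redmann2025} --- so there is no in-paper route to compare against; what you give is the standard integrating-factor proof, and all the measure-theoretic steps are handled properly: $F(t)=\int_0^t\beta(s)f(s)\,ds$ is absolutely continuous because $\beta f$ is integrable (integrable times bounded measurable), the differential inequality $F'-\beta F\le\alpha\beta$ holds a.e.\ since multiplying the hypothesis by $\beta(t)\ge 0$ preserves the inequality, and the product $\expn^{-B(t)}F(t)$ of absolutely continuous functions is again absolutely continuous, so the fundamental theorem of calculus legitimately converts the a.e.\ derivative bound into the integral bound. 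The passage to \eqref{gronwallineq2} is also sound: $\alpha(s)\beta(s)\le\alpha(t)\beta(s)$ for $s\le t$ regardless of the sign of $\alpha$ because $\beta\ge 0$, and the identity $\int_0^t\beta(s)\exp\{\int_s^t\beta(v)\,dv\}\,ds=\expn^{B(t)}-1$ follows from differentiating $s\mapsto\exp\{B(t)-B(s)\}$ a.e. Your side remark that continuity of $\alpha$ is not actually needed for the second assertion is also accurate; monotonicity plus boundedness suffices.
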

\begin{proof}
There is a proof of this statement in \cite[Lemma A.3]{redmann2025}.
\end{proof}

\subsection{Proof of Lemma \ref{Hilfslemma}}\label{sec_proof_Lemma0}

\begin{proof}[Proof of Lemma \ref{Hilfslemma}]
Using the triangle inequality and the assumptions in (a), we obtain
	\begin{align*}
	 \|x(t)\|&\leq \|x_0\|+\int_{0}^{t}\|f_0(x(s))\| \, \mathrm{d}s+\int_{0}^{t}\|f(x(s))\|\|u(s)\| \, \mathrm{d}s\\
	 &\leq \|x_0\|+\int_{0}^{t}\mathcal C(1+\|x(s)\|) \, \mathrm{d}s+\int_{0}^{t}\mathcal C(1+\|x(s)\|)\|u(s)\| \, \mathrm{d}s\\
	 &\leq \|x_0\|+\mathcal C(T+\|u\|_{L^1([0,T])})+\int_{0}^{t}\mathcal C(1+\|u(s)\|)\|x(s)\|\, \mathrm{d}s
	\end{align*}
for all $t\in [0, T]$. Applying \eqref{gronwallineq2} yields
\begin{align}\nonumber
   \|x(t)\|&\leq (\|x_0\|+\mathcal C(T+\|u\|_{L^1([0,T])}))\exp\left\{\int_{0}^t \mathcal C(1+\|u(s)\|)ds\right\}\\ \label{x_in_ball}
   &\leq (\|x_0\|+\mathcal C(T+R))\exp\left\{\mathcal C(T+R)\right\}
   \end{align}
   for all $t\in [0, T]$. On the other hand,
    the product rule and \eqref{ODE} provide
   \begin{align}\nonumber
\|x(t)\|^2&=x(t)^\top x(t)=\|x_0\|^2+2 \int_{0}^{t} x(s)^\top \, \mathrm{d}x(s)\\ \label{apply_prod_rule}
&=\|x_0\|^2+2 \int_{0}^{t} x(s)^\top  f_0(x(s)) \, \mathrm{d}s+2 \int_{0}^{t} x(s)^\top  f(x(s))u(s)\, \mathrm{d}s.
			\end{align}
We insert the following estimate
\begin{align}\nonumber
   2x(s)^\top  f(x(s))u(s) &
   =2\trace\Big(f(x(s))u(s) x(s)^\top\Big)
   \leq \big\|f(x(s))\big\|^2_F + \big\|u(s) x(s)^\top\big\|^2_F\\ \label{est_square}
   &=\big\|f(x(s))\big\|^2_F + \|x(s)\|^2\|u(s)\|^2
                 \end{align}
into \eqref{apply_prod_rule} and exploit the one-sided linear growth condition in (b) leading to \begin{align*}
\|x(t)\|^2&\leq \|x_0\|^2+ \int_{0}^{t} 2\langle x(s), f_0(x(s))\rangle + \big\|f(x(s))\big\|^2_F \, \mathrm{d}s+\int_{0}^{t} \|x(s)\|^2\|u(s)\|^2\, \mathrm{d}s\\
&\leq \|x_0\|^2+ \int_{0}^{t} \mathcal C(1+\|x(s)\|^2) \, \mathrm{d}s+\int_{0}^{t} \|x(s)\|^2\|u(s)\|^2\, \mathrm{d}s\\
&\leq \|x_0\|^2+\mathcal CT+ \int_{0}^{t} (\mathcal C+\|u(s)\|^2)\|x(s)\|^2 \, \mathrm{d}s
			\end{align*}
for all $t\in [0, T]$. From \eqref{gronwallineq2} we obtain \begin{align}\label{x_in_ball2}
\|x(t)\|^2\leq (\|x_0\|^2+\mathcal CT)\exp\left\{\mathcal CT+\|u\|_{L^2([0,T])}^2\right\}\leq (\|x_0\|^2+\mathcal CT)\exp\left\{\mathcal CT+R^2\right\}
\end{align}
for all $t\in[0, T]$.  Hence, in both (a) and (b), $x$ takes values in a ball with radius being independent of the particular control. Since $f$ is continuous, it is bounded by a constant independent of $u$ on this ball. This concludes the proof.

	\end{proof}

\subsection{Proof of Theorem \ref{Lemma1}}\label{sec_proof_Lemma1}

\begin{proof}[Proof of Theorem \ref{Lemma1}]
Let us write $x(t):=x(t; u)$ and $z(t):=x(t; v)$ for simplicity of the notation.
Adding a zero and exploiting the triangle inequality, we obtain
		\begin{align*}
			&\|x(t)-z(t)\|\\&=\|\int_{0}^t f_0(x(s))-f_0(z(s)) \, \mathrm{d}s+\int_{0}^tf(x(s))\Big(u(s)-v(s)\Big)+\Big(f(x(s))-f(z(s))\Big) v(s) \mathrm{d}s \|\\
			&\leq \int_{0}^{t} \|f_0(x(s))-f_0(z(s))\|\,\mathrm{d}s +\int_{0}^t \|f(x(s))\| \|u(s)-v(s)\|+\|f(x(s))-f(z(s))\|\|v(s)\| \, \mathrm{d}s.\end{align*}
As the global Lipschitz property implies the global linear growth condition, we can apply Lemma \ref{Hilfslemma} case (a). Hence, by the Lipschitz continuity of $f_0$ and $f$, we have
			\begin{align*}
\|x(t)-z(t)\|&\leq \int_{0}^t  (L+L\|v(s)\|)\|x(s)-z(s)\| \, \mathrm{d}s + K_R \|u-v\|_{L^1([0,T ])}.
			\end{align*}
Gronwall's lemma in \eqref{gronwallineq2} yields
			\begin{align*}
				\|x(t)-z(t)\|\leq  \exp\{ L T+L R\}\cdot K_R\cdot\|u-v\|_{L^1([0,T ])}.
			\end{align*}
Therefore, the result of this theorem follows.
	\end{proof}

\subsection{Proof of Theorem \ref{Lemma2}}\label{sec_proof_Lemma2}

\begin{proof}[Proof of Theorem \ref{Lemma2}]
 Let us use $x(t)$ and $z(t)$ instead of $x(t; u)$ and $x(t; v)$, respectively. We apply the product rule and \eqref{ODE} resulting in
		\begin{align*}
		&	\|x(t)-z(t)\|^2=(x(t)-z(t))^\top(x(t)-z(t))=2 \int_{0}^{t} (x(s)-z(s))^\top \, \mathrm{d}(x(s)-z(s))\\
			&=2 \int_{0}^{t} (x(s)-z(s))^\top  (f_0(x(s))-f_0(z(s))) \, \mathrm{d}s+2 \int_{0}^{t} (x(s)-z(s))^\top  (f(x(s))u(s)-f(z(s))v(s)) \, \mathrm{d}s\\
			&=2 \int_{0}^{t} (x(s)-z(s))^\top  (f_0(x(s))-f_0(z(s))) \, \mathrm{d}s\\
			&\quad+2 \int_{0}^{t} (x(s)-z(s))^\top  (f(x(s))\Big(u(s)-v(s)\Big)+\Big(f(x(s))-f(z(s))\Big)v(s)) \, \mathrm{d}s.
		\end{align*}
We exploit the following estimates to the last line of the above identities: \begin{align*}
&2\langle x(s)-z(s),  f(x(s))\big(u(s)-v(s)\big)\rangle \leq \|x(s)-z(s)\|^2 + \|f(x(s))\|^2\|u(s)-v(s)\|^2, \\
&2\langle x(s)-z(s),  \Big(f(x(s))-f(z(s))\Big)v(s)\rangle \leq \|f(x(s))-f(z(s))\|_F^2+\|x(s)-z(s)\|^2\|v(s)\|^2,
\end{align*}
where the last inequality is derived analogously to \eqref{est_square}. This results in \begin{align*}
\|x(t)-z(t)\|^2
&\leq  \int_{0}^{t} 2\langle x(s)-z(s), f_0(x(s))-f_0(z(s))\rangle +\|f(x(s))-f(z(s))\|_F^2 \, \mathrm{d}s\\
&\quad+ \int_{0}^{t} \|x(s)-z(s)\|^2(1+\|v(s)\|^2) \, \mathrm{d}s+ \int_{0}^{t} \|f(x(s))\|^2\|u(s)-v(s)\|^2 \, \mathrm{d}s.
		\end{align*}
We apply the one-sided Lipschitz property and Lemma \ref{Hilfslemma} case (b) giving us  \begin{align*}
\|x(t)-z(t)\|^2\leq  \int_{0}^{t} \|x(s)-z(s)\|^2(L+1+\|v(s)\|^2) \, \mathrm{d}s+ K_R^2\|u-v\|^2_{L^2([0, T])}
\end{align*}
for all $t\in[0, T]$. Without loss of generality we can assume that $L\geq 0$, so that Lemma \ref{gron_int} can be applied. This leads to
\begin{align*}
\|x(t)-z(t)\|^2\leq  \exp\left\{T(L+1)+R^2\right\} K_R^2\|u-v\|^2_{L^2([0, T])}
\end{align*}
for all $t\in [0, T]$ and hence concludes the proof.
	\end{proof}

\section*{Acknowledgments}
 MR and JW are supported by the DFG via the individual grant ``Low-order approximations for large-scale problems arising in the context of high-dimensional
PDEs and spatially discretized SPDEs''-- project number 499366908.
	\bibliography{bibfile}

@BOOK{Natanson,
  author={Natanson, I. P.},
  title={{Theorie der Funktionen einer reellen Ver{\"a}nderlichen}},
  edition={second},
  publisher={Akad.-Verl.},
  address={Berlin},
  year={1961},
  pages={XII, 590 S.},
  language={ger},
  series={Mathematische Lehrb{\"u}cher und Monographien},
  note={translation from the Russian},
}

@Article{Hambly2010,
   title={{Uniqueness for the signature of a path of bounded variation and the reduced path group}},
   volume={171},
   ISSN={0003-486X},
   url={http://dx.doi.org/10.4007/annals.2010.171.109},
   DOI={10.4007/annals.2010.171.109},
   number={1},
   journal={Annals of Mathematics},
   publisher={Annals of Mathematics},
   author={B. Hambly and T. Lyons},
   year={2010},
  pages={109–167} 
}

@Article{Cuchiero2022,
	title = {{Signature-Based Models: Theory and Calibration}},
      	author={C. Cuchiero and G. Gazzani and S. Svaluto-Ferro},
	journal = {SIAM Journal on Financial Mathematics},
	volume = {14},
	number = {3},
	pages = {910-957},
	year = {2023},
	doi = {10.1137/22M1512338},
	URL = { https://doi.org/10.1137/22M1512338},
	eprint = { https://doi.org/10.1137/22M1512338}
}

@Article{Boedihardjo2015,
      title={{The signature of a rough path: Uniqueness}}, 
      author={H. Boedihardjo and X. Geng and T. Lyons and D. Yang},
journal = {Advances in Mathematics},
volume = {293},
pages = {720-737},
year = {2016},
issn = {0001-8708},
doi = {https://doi.org/10.1016/j.aim.2016.02.011},
url = {https://www.sciencedirect.com/science/article/pii/S0001870816301104},
keywords = {Rough paths, Chen series, Magnus series, Controlled differential equation, Reduced path group, Signature},
abstract = {In the context of controlled differential equations, the signature is the exponential function on paths. B. Hambly and T. Lyons proved that the signature of a bounded variation path is trivial if and only if the path is tree-like. We extend Hambly–Lyons' result and their notion of tree-like paths to the setting of weakly geometric rough paths in a Banach space. At the heart of our approach is a new definition for reduced path and a lemma identifying the reduced path group with the space of signatures.}
}

@Article{Hanche,
title = {{An improvement of the Kolmogorov–Riesz compactness theorem}},
journal = {Expositiones Mathematicae},
volume = {37},
number = {1},
pages = {84-91},
year = {2019},
issn = {0723-0869},
doi = {https://doi.org/10.1016/j.exmath.2018.03.002},
url = {https://www.sciencedirect.com/science/article/pii/S0723086918300161},
author = {H. Hanche-Olsen and H. Holden and E. Malinnikova},
}

@BOOK{Hale,
    author  = {J. Hale },
    title   = {{Ordinary Differential Equations}},
    year    = {1980},
  edition={second},
 publisher={Robert E. Krieger Publishing Company},
 }

@book{FV10, 
place={Cambridge},
 series={Cambridge Studies in Advanced Mathematics},
 title={{Multidimensional Stochastic Processes as Rough Paths: Theory and Applications}},
 publisher={Cambridge University Press},
 author={P. K. Friz and N. B. Victoir},
 year={2010}, 
collection={Cambridge Studies in Advanced Mathematics}
}

@Book{mao,
  author    = {X. Mao},
  publisher = {Woodhead Publishing},
  title     = {{Stochastic Differential Equations and Applications}},
edition = {second},
  year      = {2007},
}

@Article{lip_extend,
  author  = {M. D. Kirszbraun},
  journal = {Fundamenta Mathematicae},
  title   = {{Über die zusammenziehende und Lipschitzsche Transformationen}},
  year    = {1934},
  number  = {1},
  pages   = {77--108},
  volume  = {22},
}

@Article{BayerRedmann,
      title={{Dimension reduction for path signatures}}, 
journal={arXiv preprint: 2412.14723},
      author={C. Bayer and M. Redmann},
      year={2024},
      eprint={2412.14723},
      archivePrefix={arXiv},
      primaryClass={math.PR},
      url={https://arxiv.org/abs/2412.14723}, 
}

@article{redmann2025,
      title={{Model reduction for fully nonlinear stochastic systems}}, 
      author={M. Redmann},
journal={arXiv preprint: 2508.02263},
      year={2025},
      eprint={2508.02263},
      archivePrefix={arXiv},
      primaryClass={math.PR},
      url={https://arxiv.org/abs/2508.02263}, 
}

@ARTICLE{Gu2011,
  author={C. Gu},
  journal={IEEE Transactions on Computer-Aided Design of Integrated Circuits and Systems}, 
  title={{QLMOR: A Projection-Based Nonlinear Model Order Reduction Approach Using Quadratic-Linear Representation of Nonlinear Systems}}, 
  year={2011},
  volume={30},
  number={9},
  pages={1307-1320},
  keywords={Polynomials;Mathematical model;Computational modeling;Nonlinear systems;Integrated circuit modeling;Differential equations;Model order reduction;nonlinear;projection;quadratic-linear},
  doi={10.1109/TCAD.2011.2142184}}

@Book{Rugh1981,
  title={{Nonlinear System Theory: The Volterra / Wiener Approach}},
publisher ={The Johns Hopkins University Press},
  author={W. J. Rugh},
  year={1981},
  url={https://api.semanticscholar.org/CorpusID:118756614}
}

@Book{Sastry1999,
  author    = {S. Sastry},
  title     = {{Nonlinear Systems: Analysis, Stability, and Control}},
  publisher = {Springer},
  year      = {1999},
}

@article{Scherpen2010,
author = {K. Fujimoto and J. M. A. Scherpen},
title = {{Balanced Realization and Model Order Reduction for Nonlinear Systems Based on Singular Value Analysis}},
journal = {SIAM Journal on Control and Optimization},
volume = {48},
number = {7},
pages = {4591-4623},
year = {2010},
doi = {10.1137/070695332}
}

@article{Scherpen1993,
title = {{Balancing for nonlinear systems}},
journal = {Systems Control Letters},
volume = {21},
number = {2},
pages = {143-153},
year = {1993},
issn = {0167-6911},
doi = {https://doi.org/10.1016/0167-6911(93)90117-O},
url = {https://www.sciencedirect.com/science/article/pii/016769119390117O},
author = {J. M. A. Scherpen},
keywords = {balancing, nonlinear systems, Hamilton-Jacobi equations, Hankel singular values, model reduction},
abstract = {We present a method of balancing for nonlinear systems which is an extension of balancing for linear systems in the sense that it is basd on the input and output energy of a system. It is a local result, but gives ‘broader’ results than we obtain by just linearizing the system. Furthermore, the relation with balancing of the linearization is dealt with. We propose to use the method as a tool for nonlinear model reduction and investigate some of the properties of the reduced system.}
}

@ARTICLE{Scherpen2014,
  author={B. Besselink and N. van de Wouw and J. M. A. Scherpen and H. Nijmeijer},
  journal={IEEE Transactions on Automatic Control}, 
  title={{Model Reduction for Nonlinear Systems by Incremental Balanced Truncation}}, 
  year={2014},
  volume={59},
  number={10},
  pages={2739-2753},
  keywords={Observability;Controllability;Stability analysis;Reduced order systems;Asymptotic stability;Nonlinear systems;Trajectory;Incremental balanced truncation},
  doi={10.1109/TAC.2014.2326548}}

@article{Otto2023,
author = {S. E. Otto and A. Padovan and C. W. Rowley},
title = {{Model Reduction for Nonlinear Systems by Balanced Truncation of State and Gradient Covariance}},
journal = {SIAM Journal on Scientific Computing},
volume = {45},
number = {5},
pages = {A2325-A2355},
year = {2023},
doi = {10.1137/22M1513228},
}

@Inbook{Willcox2022,
author="B. Kramer
and K. Willcox",
title={{Balanced Truncation Model Reduction for Lifted Nonlinear Systems}},
bookTitle="Realization and Model Reduction of Dynamical Systems: A Festschrift in Honor of the 70th Birthday of Thanos Antoulas",
year="2022",
publisher="Springer International Publishing",
address="Cham",
pages="157--174",
abstract="We present a balanced truncation model reduction approach for a class of nonlinear systems with time-varying and uncertain inputs. First, our approach brings the nonlinear system into quadratic-bilinear (QB) form via a process called lifting, which introduces transformations via auxiliary variables to achieve the specified model form. Second, we extend a recently developed QB balanced truncation method to be applicable to such lifted QB systems that share the common feature of having a system matrix with zero eigenvalues. We illustrate this framework and the multi-stage lifting transformation on a tubular reactor model. In the numerical results we show that our proposed approach can obtain reduced-order models that are more accurate than proper orthogonal decomposition reduced-order models in situations where the latter are sensitive to the choice of training data.",
isbn="978-3-030-95157-3",
doi="10.1007/978-3-030-95157-3_9",
url="https://doi.org/10.1007/978-3-030-95157-3_9"
}

@article{Willcox2019,
author = {B. Kramer and K. Willcox},
title = {{Nonlinear Model Order Reduction via Lifting Transformations and Proper Orthogonal Decomposition}},
journal = {AIAA Journal},
volume = {57},
number = {6},
pages = {2297-2307},
year = {2019},
doi = {10.2514/1.J057791},
URL = { https://doi.org/10.2514/1.J057791},
}

@article{Willcox2020,
author = {E. Qian and B. Krämer and B. Peherstorfer and K. Willcox},
year = {2020},
pages = {132401},
title = {{Lift \& Learn: Physics-informed machine learning for large-scale nonlinear dynamical systems}},
volume = {406},
journal = {Physica D: Nonlinear Phenomena},
doi = {10.1016/j.physd.2020.132401}
}

@ARTICLE{Condon2004,
  title    = {{Empirical Balanced Truncation of Nonlinear Systems}},
  author   = "M. Condon and R. Ivanov",
  abstract = "Novel constructions of empirical controllability and
              observabilitygramians for nonlinear systems are proposed for
              subsequent use in a balancedtruncation style of model reduction.
              The new gramiansare based on a generalisation of the fundamental
              solution for aLinear Time-Varying system. Relationships between
              the givengramians for nonlinear systems and the standard gramians
              for bothLinear Time-Invariant and Linear Time-Varying systems
              areestablished as well as relationships to prior constructions
              proposedfor empirical gramians. Application of the new gramians
              isillustrated through a sample test-system.",
  journal  = "Journal of Nonlinear Science",
  volume   =  14,
  number   =  5,
  pages    = "405--414",
  year     =  2004
}

@article{Antoulas2018,
author = {I. V. Gosea and A. C. Antoulas},
title = {{Data-driven model order reduction of quadratic-bilinear systems}},
journal = {Numerical Linear Algebra with Applications},
volume = {25},
number = {6},
keywords = {data-driven, Loewner framework, Loewner matrix, model order reduction, quadratic-bilinear systems, rational interpolation, Sylvester equations},
doi = {https://doi.org/10.1002/nla.2200},
url = {https://onlinelibrary.wiley.com/doi/abs/10.1002/nla.2200},
eprint = {https://onlinelibrary.wiley.com/doi/pdf/10.1002/nla.2200},
abstract = {Summary We introduce a data-driven model order reduction approach that represents an extension of the Loewner framework for linear and bilinear systems to the case of quadratic-bilinear (QB) systems. For certain types of nonlinear systems, one can always find an equivalent QB model without performing any approximation whatsoever. An advantage of the Loewner framework is that information about the redundancy of the given data is explicitly available, by means of the singular values of the Loewner matrices. This feature is also valid for the proposed generalization. As for the linear and bilinear cases, these matrices can be directly computed by solving Sylvester equations. We begin by defining generalized higher-order transfer functions for QB systems. These multivariate rational functions play an important role in the model order reduction process. We construct reduced-order systems for which the associated transfer functions match those corresponding to the original system at selected tuples of interpolation points. Another benefit of the proposed approach is that it is data-driven oriented, in the sense that one would only need computed or measured samples to construct a reduced-order QB system. We illustrate the practical applicability of the proposed method by means of several numerical experiments.},
year = {2018}
}

@Book{Friz2020,
author="P. K. Friz
and M. Hairer",
title={{A Course on Rough Paths: With an Introduction to Regularity Structures}},
year="2020",
publisher="Springer International Publishing",
address="Cham",
isbn="978-3-030-41556-3",
doi="10.1007/978-3-030-41556-3_4",
url="https://doi.org/10.1007/978-3-030-41556-3_4"
}

@article{Lyons1998,
abstract = {This paper aims to provide a systematic approach to the treatment of differential equations of the typedyt = Σi fi(yt) dxti where the driving signal xt is a rough path. Such equations are very common and occur particularly frequently in probability where the driving signal might be a vector valued Brownian motion, semi-martingale or similar process.However, our approach is deterministic, is totally independent of probability and permits much rougher paths than the Brownian paths usually discussed. The results here are strong enough to treat the main probabilistic examples and significantly widen the class of stochastic processes which can be used to drive stochastic differential equations. (For a simple example see [10], [1]).We hope our results will have an influence on infinite dimensional analysis on path spaces, loop groups, etc. as well as in more applied situations. Variable step size algorithms for the numerical integration of stochastic differential equations [8] have been constructed as a consequence of these results.},
author = {T. Lyons},
journal = {Revista Matemática Iberoamericana},
keywords = {Ecuaciones diferenciales estocásticas; Proceso de difusión; Movimiento browniano; stochastic differential equations; rough paths; Brown motion; Gauss and Markov processes; Lie algebras},
language = {eng},
number = {2},
pages = {215-310},
title = {{Differential equations driven by rough signals.}},
url = {http://eudml.org/doc/39555},
volume = {14},
year = {1998},
}

@article{Lyons2025,
author={A. McLeod and T. Lyons},
journal = {EMS Surveys in Mathematical Sciences},
title = {{Signature methods in machine learning}},
year = {2025},
}

@InProceedings{Sussmann1976,
author="H. Sussmann",
title={{Semigroup Representations, Bilinear Approximation of Input-Output Maps, and Generalized Inputs}},
booktitle="Mathematical Systems Theory",
year="1976",
publisher="Springer Berlin Heidelberg",
pages="172--191",
abstract="Our aim in this paper is to show how a number of interesting system-theoretic problems fit very naturally within the framework of representation theory. We will start from the rather trivial and well-known fact that a continuous-time bilinear system can be regarded as a finite-dimensional, linear representation of a semigroup, namely, the semigroup of all input functions (the semigroup operation being concatenation).",
isbn="978-3-642-48895-5"
}

@article{Gallman1976,
title = {{Representations of nonlinear systems via the Stone-Weierstrass theorem}},
journal = {Automatica},
volume = {12},
number = {6},
pages = {619-622},
year = {1976},
issn = {0005-1098},
doi = {https://doi.org/10.1016/0005-1098(76)90043-1},
url = {https://www.sciencedirect.com/science/article/pii/0005109876900431},
author = {P. G. Gallman and K. S. Narendra},
abstract = {The Stone-Weierstrass theorem is extended and used to develop a general theorem on representations of bounded, continuous, time-invariant, causal nonlinear systems which allows one to construct representations with desired structural properties. The general theorem encompasses the classic series of Volterra, Wiener, and Barrett as well as Zadeh's hierarchy. Application to system identification is discussed.}
}

@article{Redmann2023,
author = {M. Redmann and I. Pontes Duff},
title = {{Model order reduction for bilinear systems with non-zero initial states – different approaches with error bounds}},
journal = {International Journal of Control},
volume = {96},
number = {6},
pages = {1491--1504},
year = {2023},
publisher = {Taylor \& Francis},
doi = {10.1080/00207179.2022.2053209},
}

@article{Benner2011,
author = {P. Benner and T. Damm},
title = {{Lyapunov Equations, Energy Functionals, and Model Order Reduction of Bilinear and Stochastic Systems}},
journal = {SIAM Journal on Control and Optimization},
volume = {49},
number = {2},
pages = {686-711},
year = {2011},
doi = {10.1137/09075041X},
}

@article{Al-Baiyat1993,
title = {{New model reduction scheme for k-power bilinear systems}},
abstract = "A model reduction scheme of k-power bilinear systems is proposed in this work. The canonical state space structure of k-power systems is used to simplify a balancing like model reduction scheme for bilinear systems. The derived model reduction algorithm reduces to computational steps similar in complexity to the balanced approximation of linear systems. Controllability and observability gramians turn out to have simple block diagonal structures and their properties are easily derived. The simulation of an 11th order system shows good performances of the reduced order models.",
author = "S. A. Al-Baiyat  and M. Bettayeb",
year = "1993",
language = "English",
volume = "1",
pages = "22--27",
journal = "Proceedings of the IEEE Conference on Decision and Control",
issn = "0743-1546",
publisher = "Institute of Electrical and Electronics Engineers Inc.",
}

@article{Breiten2010,
title = {{Krylov subspace methods for model order reduction of bilinear control systems}},
journal = {Systems \& Control Letters},
volume = {59},
number = {8},
pages = {443-450},
year = {2010},
issn = {0167-6911},
doi = {https://doi.org/10.1016/j.sysconle.2010.06.003},
url = {https://www.sciencedirect.com/science/article/pii/S0167691110000678},
author = {T. Breiten and T. Damm},
keywords = {Bilinear systems, Multimoment-matching, Krylov subspaces, Model order reduction, Carleman bilinearization},
abstract = {We discuss the use of Krylov subspace methods with regard to the problem of model order reduction. The focus lies on bilinear control systems, a special class of nonlinear systems, which are closely related to linear systems. While most existent approaches are based on series expansions around zero, we will extend the underlying ideas to a more general context and show that there exist several ways to reduce bilinear systems. Besides, we will briefly address the problem of stability preserving model reduction and further explain the benefit of using two-sided projection methods. By means of some numerical examples, we will illustrate the performance of the presented reduction methods.}
}

@article{Benner2012,
author = {P. Benner and T. Breiten},
title = {{Interpolation-Based {${\cal H}_2$}-Model Reduction of Bilinear Control Systems}},
journal = {SIAM Journal on Matrix Analysis and Applications},
volume = {33},
number = {3},
pages = {859-885},
year = {2012},
doi = {10.1137/110836742},
}

@article{Zhang2002,
title = {{On H2 model reduction of bilinear systems}},
journal = {Automatica},
volume = {38},
number = {2},
pages = {205-216},
year = {2002},
issn = {0005-1098},
doi = {https://doi.org/10.1016/S0005-1098(01)00204-7},
url = {https://www.sciencedirect.com/science/article/pii/S0005109801002047},
author = {L. Zhang and J. Lam},
keywords = {Bilinear systems, Model reduction, Gradient methods, Optimum},
abstract = {The H2 model reduction problem for continuous-time bilinear systems is studied in this paper. By defining the H2 norm of bilinear systems in terms of the state-space matrices, the H2 model reduction error is computed via the reachability or observability gramian. Necessary conditions for the reduced order bilinear models to be H2 optimal are given. The gradient flow approach is used to obtain the solution of the H2 model reduction problem. The formulation allows certain properties of the original models to be preserved in the reduced order models. The model reduction procedure developed can also be applied to finite-dimensional linear time-invariant systems. A numerical example is employed to illustrate the effectiveness of the proposed method.}
}

@article{Redmann2021,
author = {M. Redmann},
title = {{Bilinear Systems---A New Link to {$\mathcal H_2$}-norms, Relations to Stochastic Systems, and Further Properties}},
journal = {SIAM Journal on Control and Optimization},
volume = {59},
number = {4},
pages = {2477-2497},
year = {2021},
doi = {10.1137/19M1304106},
}

@article{Redmann2020,
  title={{Energy estimates and model order reduction for stochastic bilinear systems}},
  author={M. Redmann},
  journal={International Journal of Control},
  volume={93},
  number={8},
  pages={1954--1963},
  year={2020},
  publisher={Taylor \& Francis}
}

@article{Redmann2018,
author = {M. Redmann},
title = {{Type II Balanced Truncation for Deterministic Bilinear Control Systems}},
journal = {SIAM Journal on Control and Optimization},
volume = {56},
number = {4},
pages = {2593-2612},
year = {2018},
doi = {10.1137/17M1147962},
}

@article{Cao2021,
author = {X. Cao and P. Benner and I. Pontes Duff and W. Schilders},
title = {{Model order reduction for bilinear control systems with inhomogeneous initial conditions}},
journal = {International Journal of Control},
volume = {94},
number = {10},
pages = {2886--2895},
year = {2021},
publisher = {Taylor \& Francis},
doi = {10.1080/00207179.2020.1740945},
}

@article{Flagg2015,
author = {G. Flagg and S. Gugercin},
title = {{Multipoint Volterra Series Interpolation and {$\mathcal{H}_2$} Optimal Model Reduction of Bilinear Systems}},
journal = {SIAM Journal on Matrix Analysis and Applications},
volume = {36},
number = {2},
pages = {549-579},
year = {2015},
doi = {10.1137/130947830},
}

@article{Artstein1975,
title = {{Continuous dependence on parameters: On the best possible results}},
journal = {Journal of Differential Equations},
volume = {19},
number = {2},
pages = {214-225},
year = {1975},
issn = {0022-0396},
doi = {https://doi.org/10.1016/0022-0396(75)90002-9},
url = {https://www.sciencedirect.com/science/article/pii/0022039675900029},
author = {Z. Artstein}
}

@ARTICLE{Neustadt1970,
  title   = {{On the solutions of certain integral-like operator equations. Existence, uniqueness and dependence theorems}},
  author  = "L. W. Neustadt",
  journal = "Archive for Rational Mechanics and Analysis",
  volume  =  38,
  number  =  2,
  pages   = "131--160",
  year    =  1970
}

@article{Freitag2025,
author = {M. A. Freitag and J. M. Nicolaus and M. Redmann},
title = {{Learning Stochastic Reduced Models from Data: A Nonintrusive Approach}},
journal = {SIAM Journal on Scientific Computing},
volume = {47},
number = {5},
pages = {A2851-A2880},
year = {2025},
doi = {10.1137/24M1679756},
}

@article{Peherstorfer2016,
title = {{Data-driven operator inference for nonintrusive projection-based model reduction}},
journal = {Computer Methods in Applied Mechanics and Engineering},
volume = {306},
pages = {196-215},
year = {2016},
issn = {0045-7825},
doi = {https://doi.org/10.1016/j.cma.2016.03.025},
url = {https://www.sciencedirect.com/science/article/pii/S0045782516301104},
author = {B. Peherstorfer and K. Willcox},
keywords = {Nonintrusive model reduction, Data-driven model reduction, Black-box full model, Inference},
abstract = {This work presents a nonintrusive projection-based model reduction approach for full models based on time-dependent partial differential equations. Projection-based model reduction constructs the operators of a reduced model by projecting the equations of the full model onto a reduced space. Traditionally, this projection is intrusive, which means that the full-model operators are required either explicitly in an assembled form or implicitly through a routine that returns the action of the operators on a given vector; however, in many situations the full model is given as a black box that computes trajectories of the full-model states and outputs for given initial conditions and inputs, but does not provide the full-model operators. Our nonintrusive operator inference approach infers approximations of the reduced operators from the initial conditions, inputs, trajectories of the states, and outputs of the full model, without requiring the full-model operators. Our operator inference is applicable to full models that are linear in the state or have a low-order polynomial nonlinear term. The inferred operators are the solution of a least-squares problem and converge, with sufficient state trajectory data, in the Frobenius norm to the reduced operators that would be obtained via an intrusive projection of the full-model operators. Our numerical results demonstrate operator inference on a linear climate model and on a tubular reactor model with a polynomial nonlinear term of third order.}
}

@article{Willcox2021,
 title={{Learning physics-based models from data: perspectives from inverse problems and model reduction}},
 volume={30},
 DOI={10.1017/S0962492921000064},
 journal={Acta Numerica},
 author={O. Ghattas and K. Willcox}, 
year={2021}, 
pages={445–554}
}

@article{Kramer2023,
   author = "B. Kramer and B. Peherstorfer and K. Willcox",
   title = {{Learning Nonlinear Reduced Models from Data with Operator Inference}}, 
   journal= "Annual Review of Fluid Mechanics",
   year = "2024",
   volume = "56",
   number = "Volume 56, 2024",
   pages = "521-548",
   doi = "https://doi.org/10.1146/annurev-fluid-121021-025220",
   url = "https://www.annualreviews.org/content/journals/10.1146/annurev-fluid-121021-025220",
   publisher = "Annual Reviews",
   issn = "1545-4479",
   type = "Journal Article",
   keywords = "structure preservation",
   keywords = "nonlinear model reduction",
   keywords = "scientific machine learning",
   keywords = "data-driven modeling",
   keywords = "Operator Inference",
   abstract = "This review discusses Operator Inference, a nonintrusive reduced modeling approach that incorporates physical governing equations by defining a structured polynomial form for the reduced model, and then learns the corresponding reduced operators from simulated training data. The polynomial model form of Operator Inference is sufficiently expressive to cover a wide range of nonlinear dynamics found in fluid mechanics and other fields of science and engineering, while still providing efficient reduced model computations. The learning steps of Operator Inference are rooted in classical projection-based model reduction; thus, some of the rich theory of model reduction can be applied to models learned with Operator Inference. This connection to projection-based model reduction theory offers a pathway toward deriving error estimates and gaining insights to improve predictions. Furthermore, through formulations of Operator Inference that preserve Hamiltonian and other structures, important physical properties such as energy conservation can be guaranteed in the predictions of the reduced model beyond the training horizon. This review illustrates key computational steps of Operator Inference through a large-scale combustion example.",
  }

@article{Peherstorfer2023,
author = {W. I. T. Uy and Y. Wang and Y. Wen and B. Peherstorfer},
title = {{Active Operator Inference for Learning Low-Dimensional Dynamical-System Models from Noisy Data}},
journal = {SIAM Journal on Scientific Computing},
volume = {45},
number = {4},
pages = {A1462-A1490},
year = {2023},
doi = {10.1137/21M1439729},
}

@Article{morBenG24,
  author  = {P. Benner and P. Goyal},
  journal = {Advances in Computational Mathematics},
  title   = {{Balanced truncation for quadratic-bilinear control systems}},
  year    = {2024},
  number  = {88},
  volume  = {50},
}

@Article{Cuchiero23,
  author  = {C. Cuchiero and F. Primavera and S. Svaluto-Ferro},
  journal = {Finance and Stochastics},
  title   = {{Universal approximation theorems for continuous functions of c\`adl\`ag paths and L\'evy-type signature models}},
  year    = {2025},
  number  = {2},
  pages   = {289--342},
  volume  = {29},
}

@Book{Heil,
  author    = {C. Heil},
  publisher = {Springer International Publishing},
  title     = {{Introduction to Real Analysis}},
  year      = {2019},
}

@Book{Folland,
  author = {G. B. Folland},
  title  = {{Real Analysis: Modern Techniques and Their Applications}},
  year   = {1999},
}

@Article{redmann_jamshidi,
  author  = {M. Redmann and N. Jamshidi},
  journal = {Mathematics of Control, Signals, and Systems},
  title   = {{Gramian-based model reduction for unstable stochastic systems}},
  year    = {2022},
  pages   = {855--881},
  volume  = {34},
}

@article{Krener1975,
author = {A. J. Krener},
title = {{Bilinear and Nonlinear Realizations of Input-Output Maps}},
journal = {SIAM Journal on Control},
volume = {13},
number = {4},
pages = {827-834},
year = {1975},
doi = {10.1137/0313049},
}
	\bibliographystyle{plain}
	
\end{document}